%


\documentclass[preprint,numbers,addressatend]{imsart}

\usepackage{amsthm,amsmath,natbib}
\usepackage{amsfonts,amssymb,graphicx}
\RequirePackage[dvips]{hyperref}
\usepackage{color}

\RequirePackage{hypernat}


\startlocaldefs
\numberwithin{equation}{section}
\newtheorem{thm}{Theorem}[section]
\newtheorem{prop}[thm]{Proposition}
\newtheorem{lem}[thm]{Lemma}

\newcommand{\boldC}{\Pi}


\oddsidemargin  0pt \topmargin   -25pt \headheight 0pt \headsep 25pt
\textwidth   6.5in \textheight 8.5in \marginparsep 0pt
\marginparwidth 0pt
\oddsidemargin 0pt
\evensidemargin 0pt
\parskip 1ex  \parindent 0ex

\usepackage{graphicx}
\usepackage{verbatim}

\endlocaldefs

\begin{document}

\begin{frontmatter}

\title{Tail behavior of solutions of linear recursions on trees}
\runtitle{Linear recursions on trees}

\author{\fnms{Mariana} \snm{Olvera-Cravioto}\ead[label=e1]{molvera@ieor.columbia.edu}}
\address{Department of Industrial Engineering \\ and Operations Research \\ Columbia University \\ New York, NY 10027 \\ \printead{e1}}
\affiliation{Columbia University}

\runauthor{M. Olvera-Cravioto}

\begin{abstract}
Consider the linear nonhomogeneous fixed point equation 
$$R\stackrel{\mathcal{D}}{=}\sum_{i=1}^NC_iR_i+Q,$$ 
where $(Q,N,C_1,\dots,C_N)$ is a random vector with $N\in\{0,1,2,3,\dots\}\cup\{\infty\},\,\{C_i\}_{i=1}^N\geq0,\,P(|Q|>0)>0$, and $\{R_i\}_{i=1}^N$ is a sequence of i.i.d. random variables independent of $(Q,N,C_1,\dots,C_N)$ having the same distribution as $R$. It is known that $R$ will have a heavy-tailed distribution under several different sets of assumptions on the vector $(Q,N,C_1,\dots,C_N)$. This paper investigates the settings where either $Z_N=\sum_{i=1}^NC_i$ or $Q$ are regularly varying with index $-\alpha<-1$ and $E\left[\sum_{i=1}^NC_i^\alpha\right]<1$. This work complements previous results showing that $P(R>t)\sim Ht^{-\alpha}$ provided there exists a solution $\alpha>0$ to the equation $E\left[\sum_{i=1}^N|C_i|^\alpha\right]=1$, and both $Q$ and $Z_N$ have lighter tails.  
\end{abstract}

\begin{keyword}[class=AMS]
\kwd[Primary ]{60H25}
\kwd[; secondary ]{60J80, 60F10, 60K05}
\end{keyword}

\begin{keyword}
\kwd{Stochastic fixed point equations; weighted branching processes; regular variation; stochastic recursions; large deviations; random difference equations; multiplicative cascades}
\end{keyword}

\begin{date}
\date{\today}
\end{date}

\end{frontmatter}

\section{Introduction}

Motivated by the analysis of information ranking algorithms, this paper investigates the tail behavior of the solution to the stochastic fixed point equation
\begin{equation} \label{eq:IntroLinear} 
R \stackrel{\mathcal{D}}{=} \sum_{i=1}^N C_i R_i + Q,
\end{equation}
where $(Q,N, C_1,\dots, C_N)$ is a random vector with $N \in \mathbb{N} \cup \{\infty\}$, $\mathbb{N} = \{0, 1, 2, 3, \dots\}$, $\{C_i\}_{i =1}^N \geq 0$, $P(|Q| >0) > 0$, and $\{R_i\}_{i \in \mathbb{N}}$ is a sequence of i.i.d. random variables independent of $(Q, N, C_1,\dots, C_N)$ having the same distribution as $R$; the symbol $\stackrel{\mathcal{D}}{=}$ denotes equality in distribution. This stochastic fixed point equation recently appeared in the analysis of Google's PageRank algorithm, which computes the ranks of pages on the World Wide Web according to the recursion
\begin{equation} \label{eq:PageRank}
PR(p_i) = \frac{1-d}{n} + d \sum_{p_j \in M(p_i)} \frac{PR(p_j)}{L(p_j)},
\end{equation}
where $p_1, p_2,\dots, p_n$ are the pages under consideration, $M(p_i)$ is the set of pages that link to $p_i$, $L(p_j)$ is the number of outbound links on page $p_j$, $PR(p_j)$ is the PageRank of page $p_j$, and $n$ is the total number of pages. A first order stochastic approximation for the rank of a randomly chosen page is obtained by multiplying both sides of \eqref{eq:PageRank} by $n$ and considering the fixed point equation
$$R \stackrel{\mathcal{D}}{=} 1-d + d \sum_{i=1}^N \frac{R_j}{D_j},$$
where $\{D_j\}$ are i.i.d. random variables distributed according to the out-degree distribution of the web graph, $N$ is a random variable distributed according to the in-degree distribution, and $\{R_j\}$ are i.i.d random variables having the same distribution as $R$. This approach, first introduced in \cite{Volk_Litv_Dona_07}, can be thought of as approximating the web graph with a branching tree, a well known technique used in the analysis of random graphs (see, e.g., \cite{Hof_Hoo_Van_05} and the references therein). 

The fixed point equation \eqref{eq:IntroLinear} has been recently analyzed in \cite{Jel_Olv_10, Volk_Litv_10} for the special case of $Q, N, \{C_i\}$ nonnegative and mutually independent, with the $\{C_i\}$  i.i.d.; in \cite{Volk_Litv_10} the pair $(Q, N)$ was allowed to be dependent under stronger moment conditions. One of the results in these articles was that when the distribution of $N$ is heavy-tailed, in particular, regularly varying, the tail distribution of $R$ is proportional to that of $N$, i.e., 
$$P(R > x) \sim H P(N > x) \qquad \text{as} \quad x \to \infty,$$
where $f(x) \sim g(x)$ is used throughout the paper to denote $\lim_{x \to \infty} f(x)/g(x) = 1$. This indicates that highly ranked pages are those with very large in-degree. One way in which this could be modified is by choosing a different set of weights $\{C_i\}$ in \eqref{eq:IntroLinear}. The general setting of $(Q, N, C_1,\dots, C_N)$ arbitrarily dependent with the $\{C_i\}$ not necessarily independent and/or identically distributed allows a great level of flexibility in this respect. This setting is also consistent with the broader literature on weighted branching processes \cite{Rosler_93} and branching random walks \cite{Biggins_77}, which appear  in the probabilistic analysis of other algorithms as well \cite{Ros_Rus_01, Nei_Rus_04}, e.g. Quicksort algorithm \cite{Fill_Jan_01}. 

A very well known special case of equation \eqref{eq:IntroLinear} is obtained by setting $N \equiv 1$, since then it becomes the stochastic recurrence equation
$$R \stackrel{\mathcal{D}}{=} C R + Q, \qquad (C, Q) \text{ independent of } R.$$
The power law tail asymptotics of the solution $R$ to this equation were established in the classical work of Kesten \cite{Kesten_73} (in a multivariate setting), and were also derived through the use of implicit renewal theory by Goldie \cite{Goldie_91}. The approach from \cite{Goldie_91} was generalized in \cite{Jel_Olv_11b} to analyze \eqref{eq:IntroLinear} for real-valued weights $\{C_i\}$. The main assumption in \cite{Jel_Olv_11b} (and the corresponding $N \equiv 1$ versions of \cite{Kesten_73, Goldie_91}) is the existence of a solution $\alpha > 0$ to the equation $E\left[ \sum_{i=1}^N |C_i|^\alpha \right] = 1$ such that $E\left[ \sum_{i=1}^N |C_i|^\alpha \log|C_i| \right] > 0$, $E[|Q|^\alpha] < \infty$, and if $\alpha >1$, $E\left[ \sum_{i=1}^N |C_i| \right] < 1$, $E\left[ \left( \sum_{i=1}^N |C_i| \right)^\alpha \right] < \infty$, in which case
$$P(R > x) \sim H x^{-\alpha} \qquad \text{as} \quad x \to \infty,$$
for some constant $H \geq 0$. The work of \cite{Jel_Olv_10} already shows that if such $\alpha$ does not exist, then $P( R > x)$ can still be regularly varying if either the distribution of $N$ or $Q$ are regularly varying. When $N \equiv 1$, $(C, Q)$ are generally dependent, $C \geq 0$ a.s. and $Q$ is regularly varying, the tail equivalence of $P(R > x)$ and $P(Q > x)$ was shown in \cite{Grey_94}.  The main results in this paper, Theorem \ref{T.Main_N} and Theorem \ref{T.MainQ}, give the corresponding generalization of the results in \cite{Grey_94, Jel_Olv_10} to arbitrarily dependent $(Q, N , C_1, \dots, C_N)$. In particular, it is shown that if either $P\left( \sum_{i=1}^N C_i > x \right)$, or $P(Q > x)$, are regularly varying with index $-\alpha < -1$, and certain moment conditions are satisfied, then
$$P( R > x) \sim H' P\left( \sum_{i=1}^N C_i > x \right), \quad \text{respectively}, \quad P(R > x) \sim H'' P(Q > x)$$
as $x \to \infty$, for some explicit constants $H', H'' > 0$. We point out that \eqref{eq:IntroLinear} may also have light-tailed solutions, as the work in \cite{Goldie_Grubel_96} shows for the $N \equiv 1$ case, but we focus here only on the heavy-tailed ones. 
 
The paper is organized as follows. First we construct an explicit solution to \eqref{eq:IntroLinear} on a weighted branching tree. As will be discussed in more detail in Section \ref{S.Construction}, this particular solution is the only one of practical interest, since under mild technical conditions, this is the unique limit of  the process that results from the iteration of \eqref{eq:IntroLinear} (see Lemma \ref{L.Convergence}). The main result for the case where the tail behavior of $R$ is dominated by the sum of the weights, $\sum_{i=1}^N C_i$, is given in Section \ref{S.NDominates}, and the main result for the case where $Q$ dominates is given in Section \ref{S.QDominates}. The main technical contribution of the paper is in the derivation of uniform bounds (in $n$ and $x$) for the distribution of the sum of the weights in the $n$th generation of a weighted branching tree, $P(W_n > x)$, given in Propositions \ref{P.UniformBound} and \ref{P.UniformBoundQ}. These uniform bounds are the key tool in establishing the geometric rate of convergence of the iterations of the fixed point equation \eqref{eq:IntroLinear} to the solution $R$ constructed in Section \ref{S.Construction}. Finally, the more technical proofs are postponed to Section \ref{S.Proofs} and the Appendix. 

The last thing to mention is that the approach used to derive the uniform bounds from Propositions \ref{P.UniformBound} and \ref{P.UniformBoundQ} can also be helpful in the analysis of other recursions on trees, such as the ones studied in \cite{Jel_Olv_11a} and the more extensive survey of \cite{Aldo_Band_05}, e.g., 
\begin{equation*} 
R \stackrel{\mathcal{D}}{=} \left(\bigvee_{i=1}^N C_i R_i \right) \vee Q \qquad R \stackrel{\mathcal{D}}{=} \left(\bigvee_{i=1}^N C_i R_i \right) + Q,
\end{equation*}
that may fall outside of the implicit renewal theory framework of \cite{Jel_Olv_11a}.

\section{Construction of a solution on a tree} \label{S.Construction}

We start by constructing in this section a particular solution to the fixed point equation
\begin{equation} \label{eq:Linear}
R \stackrel{\mathcal{D}}{=} \sum_{i=1}^N C_i R_i + Q,
\end{equation}
where $(Q,N, C_1,\dots, C_N)$ is a random vector with $N \in \mathbb{N} \cup \{\infty\}$, $\{C_i\}_{i=1}^N \geq 0$, $P(|Q| > 0 ) > 0$, and $\{R_i\}_{i\in \mathbb{N}}$ is a sequence of i.i.d. real-valued random variables independent of $(Q, N, C_1,\dots, C_N)$ having the same distribution as $R$. We will show in Section \ref{SS.Iterations} that the process that results from iterating \eqref{eq:Linear} converges under mild conditions to this particular solution. 

First we construct a random tree $\mathcal{T}$. We use the notation $\emptyset$ to denote the root node of $\mathcal{T}$, and $A_n$, $n \geq 0$, to denote the set of all individuals in the $n$th generation of $\mathcal{T}$, $A_0 = \{\emptyset\}$. Let $Z_n$ be the number of individuals in the $n$th generation, that is, $Z_n = |A_n|$, where $| \cdot |$ denotes the cardinality of a set; in particular, $Z_0 = 1$. We iteratively construct the tree as follows. Let $N$ be the number of individuals born to the root node $\emptyset$, $N_\emptyset = N$, and let $\{N_{(i_1,\dots, i_n)} \}_{n \geq 1}$ be i.i.d. copies of $N$. Define now 
\begin{equation} \label{eq:AnDef}
A_1 = \{ i: 1 \leq i \leq N \}, \quad A_n = \{ (i_1, i_2, \dots, i_n): (i_1, \dots, i_{n-1}) \in A_{n-1}, 1 \leq i_n \leq N_{(i_1, \dots, i_{n-1})} \}.
\end{equation}
It follows that the number of individuals $Z_n = |A_n|$ in the $n$th generation, $n \geq 1$, satisfies the branching recursion 
$$Z_{n} = \sum_{(i_1, \dots, i_{n-1}) \in A_{n-1}} N_{(i_1,\dots, i_{n-1})}.$$ 

Next let $\mathbb{N}_+ = \{1, 2, 3, \dots\}$ be the set of positive integers and let $U = \bigcup_{k=0}^\infty (\mathbb{N}_+)^k$ be the set of all finite sequences ${\bf i} = (i_1, i_2, \dots, i_n) \in U$, where by convention $\mathbb{N}_+^0 = \{ \emptyset\}$ contains the null sequence $\emptyset$. Also, for ${\bf i} \in A_1$ we simply use the notation ${\bf i} = i_1$, that is, without the parenthesis. Similarly, for ${\bf i} = (i_1, \dots, i_n)$ we will use $({\bf i}, j) = (i_1,\dots, i_n, j)$ to denote the index concatenation operation, if ${\bf i} = \emptyset$, then $({\bf i}, j) = j$. 

Now, we construct the weighted branching tree $\mathcal{T}_{Q,C}$ as follows. The root node $\emptyset$ is assigned a vector $(Q_\emptyset, N_\emptyset, C_{(\emptyset, 1)}, \dots, C_{(\emptyset, N_\emptyset)}) = (Q, N, C_1, \dots, C_N)$ with $N \in \mathbb{N} \cup \{\infty\}$ and $P(|Q| > 0) > 0$; $N$ determines the number of nodes in the first generation of $\mathcal{T}$ according to \eqref{eq:AnDef}. Each node in the first generation is then assigned an i.i.d. copy $(Q_i, N_i, C_{(i,1)}, \dots, C_{(i,N_i)})$ of the root vector and the $\{N_i\}$ are used to define the second generation in $\mathcal{T}$ according to \eqref{eq:AnDef}. 
In general, for $n\ge 2$, to each node ${\bf i} \in A_{n-1}$, we assign an i.i.d. copy 
$(Q_{\bf i}, N_{\bf i}, C_{({\bf i},1)}, \dots, C_{({\bf i}, N_{\bf i})})$ of the root vector and construct 
$A_{n} = \{({\bf i}, i_{n}): {\bf i} \in A_{n-1}, 1 \leq i_{n} \leq N_{\bf i}\}$. 
Note that the vectors $(Q_{\bf i}, N_{\bf i}, C_{({\bf i},1)}, \dots, C_{({\bf i}, N_{\bf i})})$, ${\bf i} \in A_{n-1}$, are also chosen independently of all the 
previously assigned vectors  $(Q_{\bf j}, N_{\bf j}, C_{({\bf j},1)}, \dots, C_{({\bf j}, N_{\bf j})})$, ${\bf j} \in A_{k}, 0\le k\le n-2$.
For each node in $\mathcal{T}_{Q,C}$ we also define the weight $\boldC_{(i_1,\dots,i_n)}$ via the recursion
$$ \boldC_{i_1} =C_{i_1}, \qquad \boldC_{(i_1,\dots,i_n)} = C_{(i_1,\dots, i_n)} \boldC_{(i_1,\dots,i_{n-1})}, \quad n \geq 2,$$
where $\boldC =1$ is the weight of the root node. Note that the weight $\boldC_{(i_1,\dots, i_n)}$ is equal to the product of all the weights $C_{(\cdot)}$ along the branch leading to node $(i_1, \dots, i_n)$, as depicted in Figure \ref{F.Tree}.

\begin{center}
\begin{figure}[h,t]
\begin{picture}(430,160)(0,0)
\put(0,0){\includegraphics[scale = 0.8, bb = 0 510 500 700, clip]{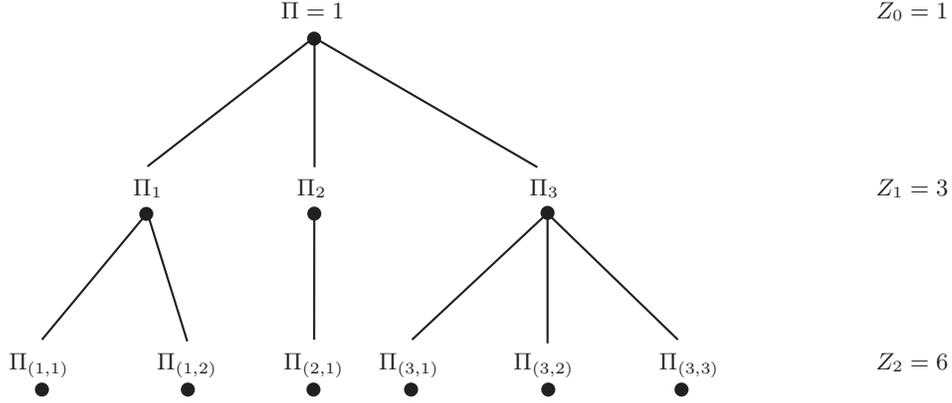}}
\put(125,150){\small $\boldC = 1$}
\put(69,83){\small $\boldC_{1}$}
\put(131,83){\small $\boldC_{2}$}
\put(219,83){\small $\boldC_{3}$}
\put(22,17){\small $\boldC_{(1,1)}$}
\put(78,17){\small $\boldC_{(1,2)}$}
\put(126,17){\small $\boldC_{(2,1)}$}
\put(162,17){\small $\boldC_{(3,1)}$}
\put(213,17){\small $\boldC_{(3,2)}$}
\put(268,17){\small $\boldC_{(3,3)}$}
\put(350,150){\small $Z_0 = 1$}
\put(350,83){\small $Z_1 = 3$}
\put(350,17){\small $Z_2 = 6$}
\end{picture}
\caption{Weighted branching tree}\label{F.Tree}
\end{figure}
\end{center}

We now define on the weighted branching tree $\mathcal{T}_{Q, C}$ the process
\begin{equation} \label{eq:W_k}
W_0 =  Q, \quad W_n =  \sum_{{\bf i} \in A_n} Q_{{\bf i}} \boldC_{{\bf i}}, \qquad n \geq 1,
\end{equation}
and the process $\{R^{(n)}\}_{n \geq 0}$ according to
\begin{equation} \label{eq:R_nDef}
R^{(n)} = \sum_{k=0}^n W_k , \qquad n \geq 0,
\end{equation}
that is, $R^{(n)}$ is the sum of the weights of all the nodes on the tree up to the $n$th generation. It is not hard to see that $R^{(n)}$ satisfies the recursion
\begin{equation} \label{eq:LinearRecSamplePath} 
R^{(n)} = \sum_{j=1}^{N_\emptyset} C_{(\emptyset,j)} R^{(n-1)}_{j} + Q_{\emptyset} = \sum_{j=1}^{N} C_{j} R^{(n-1)}_{j} + Q, \qquad n \geq 1,
\end{equation}
where $\{R_{j}^{(n-1)}\}$ are independent copies of $R^{(n-1)}$ corresponding to the tree starting with individual $j$ in the first generation and ending on the $n$th generation; note that $R_j^{(0)} = Q_j$. Moreover, since the tree structure repeats itself after the first generation, $W_n$ satisfies
\begin{align}
W_n &= \sum_{{\bf i} \in A_n} Q_{{\bf i}} \boldC_{{\bf i}} \notag\\
&= \sum_{k = 1}^{N_\emptyset} C_{(\emptyset,k)}  
\sum_{(k,\dots,i_n) \in A_n} Q_{(k,\dots,i_n)} \prod_{j=2}^n C_{(k,\dots,i_j)}  \notag\\
&\stackrel{\mathcal{D}}{=} \sum_{k=1}^N C_k W_{(n-1),k},\label{eq:WnRec}
\end{align}
where $\{W_{(n-1),k}\}$ is a sequence of i.i.d. random variables independent of $(N, C_1, \dots, C_N)$ and having the same distribution as $W_{n-1}$. 

The following result from \cite{Jel_Olv_11b} (Lemma 4.1) gives the convergence of $R^{(n)}$ to a proper limit. 

\begin{lem}
If for some $0 < \beta \leq 1$, $E[|Q|^\beta] < \infty$ and $E\left[ \sum_{j=1}^N C_j^\beta \right] < 1$, then $R^{(n)} \to R$ a.s. as $n \to \infty$, where $E[|R|^\beta] < \infty$ and is given by
\begin{equation} \label{eq:ExplicitConstr}
R \triangleq \sum_{n=0}^\infty W_n.
\end{equation}
\end{lem}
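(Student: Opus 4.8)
The plan is to show that the series $\sum_{n=0}^\infty W_n$ converges absolutely a.s., identify its sum with $R$, and check the moment bound. The natural device is a submultiplicative norm argument on the tree. For $0<\beta\leq 1$ define $\|X\|_\beta \triangleq E[|X|^\beta]$; this is subadditive, $\|X+Y\|_\beta \leq \|X\|_\beta + \|Y\|_\beta$, and it behaves multiplicatively across independent products in the form $E\bigl[|CW|^\beta\bigr] = E[C^\beta]\,E[|W|^\beta]$ when $C$ and $W$ are independent. The key computation is then to bound $E[|W_n|^\beta]$ generation by generation using the representation \eqref{eq:WnRec}.

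First I would establish, by induction on $n$, that
\begin{equation*}
E[|W_n|^\beta] \leq E[|Q|^\beta]\left(E\Bigl[\sum_{j=1}^N C_j^\beta\Bigr]\right)^n.
\end{equation*}
The base case $n=0$ is $W_0 = Q$. For the inductive step, apply subadditivity of $x \mapsto x^\beta$ (valid since $0 < \beta \leq 1$) to \eqref{eq:WnRec}, condition on $(N, C_1,\dots,C_N)$, and use that the $\{W_{(n-1),k}\}$ are i.i.d. copies of $W_{n-1}$ independent of that vector; this yields
\begin{equation*}
E[|W_n|^\beta] \leq E\Bigl[\sum_{k=1}^N C_k^\beta\Bigr] E[|W_{n-1}|^\beta],
\end{equation*}
and the claimed bound follows. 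Writing $\rho \triangleq E\bigl[\sum_{j=1}^N C_j^\beta\bigr] < 1$, we get $\sum_{n=0}^\infty E[|W_n|^\beta] \leq E[|Q|^\beta]\sum_{n=0}^\infty \rho^n = E[|Q|^\beta]/(1-\rho) < \infty$.

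Next I would convert this into a.s. absolute convergence. Since $\beta \leq 1$, for any nonnegative reals $\bigl(\sum_n a_n\bigr)^\beta \leq \sum_n a_n^\beta$, so $E\bigl[\bigl(\sum_{n=0}^\infty |W_n|\bigr)^\beta\bigr] \leq \sum_{n=0}^\infty E[|W_n|^\beta] < \infty$. In particular $\sum_{n=0}^\infty |W_n| < \infty$ a.s., so $R \triangleq \sum_{n=0}^\infty W_n$ is well-defined and finite a.s., $R^{(n)} = \sum_{k=0}^n W_k \to R$ a.s., and $E[|R|^\beta] \leq E[|Q|^\beta]/(1-\rho) < \infty$. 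Finally, to confirm $R$ solves \eqref{eq:Linear}: let $n\to\infty$ in the recursion \eqref{eq:LinearRecSamplePath}; the left side converges a.s. to $R$, and on the right side each $R^{(n-1)}_j \to R_j$ a.s. (where $R_j$ is the analogous sum on the subtree rooted at $j$), the number of terms $N$ being a.s. finite or, if $N=\infty$, handled by the same dominated-sum bound applied to the subtree sums, so $\sum_{j=1}^N C_j R_j^{(n-1)} \to \sum_{j=1}^N C_j R_j$ a.s.; hence $R = \sum_{j=1}^N C_j R_j + Q$ pathwise on this construction, and the $\{R_j\}$ are i.i.d. copies of $R$ independent of $(Q,N,C_1,\dots,C_N)$ by the tree's regenerative structure, giving \eqref{eq:Linear}.

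The main obstacle is the passage to the limit inside the (possibly infinite) sum $\sum_{j=1}^N C_j R_j^{(n-1)}$ when $N=\infty$: one cannot simply interchange limit and an infinite sum termwise. The clean fix is to not argue termwise at all but to use the global bound — note $R^{(n)} = \sum_{k=0}^n W_k$ and $R = \sum_{k=0}^\infty W_k$ with $\sum_k |W_k|<\infty$ a.s., and observe that the tail $\sum_{j=1}^N C_j (R_j - R_j^{(n-1)}) = \sum_{k=n+1}^\infty W_k \to 0$ a.s. directly from the absolute convergence of $\sum_k W_k$, bypassing any need to control individual subtree sums. This is exactly where the uniform $L^\beta$ estimate above does the real work.
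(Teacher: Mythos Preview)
Your proof is correct. Note that the paper does not actually prove this lemma itself; it is quoted verbatim as Lemma~4.1 from \cite{Jel_Olv_11b}, and the surrounding moment estimates (Proposition~\ref{P.GeneralMoments} and Lemma~\ref{L.Moments_R}) are likewise imported from \cite{Jel_Olv_11a}. Your argument is the standard one: the bound $E[|W_n|^\beta] \leq E[|Q|^\beta]\,\rho_\beta^n$ you derive by induction is precisely Proposition~\ref{P.GeneralMoments}(a) applied to positive and negative parts, and the passage from $\sum_n E[|W_n|^\beta] < \infty$ to a.s.\ absolute convergence via $(\sum a_n)^\beta \leq \sum a_n^\beta$ is exactly what the cited proof does. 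Your additional verification that $R$ solves \eqref{eq:Linear}, and in particular your clean handling of the $N=\infty$ case by writing $\sum_{j=1}^N C_j(R_j - R_j^{(n-1)}) = \sum_{k>n} W_k \to 0$ a.s., matches the paper's post-lemma discussion (which invokes the a.s.\ finiteness of $\sum_{n}\sum_{{\bf i}\in A_n} |Q_{\bf i}|\Pi_{\bf i}$ for the same purpose).
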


As discussed in \cite{Jel_Olv_11b}, the observation that the sum of all the absolute values of the weights on the tree are a.s. finite, i.e.,
$$\sum_{n=0}^\infty \sum_{{\bf i} \in A_n} |Q_{\bf i}| \Pi_{\bf i} < \infty \qquad \text{a.s.},$$
justifies the following identity
$$R = \sum_{j=1}^{N_\emptyset} C_{(\emptyset, j)} R_j^{(\infty)} + Q_\emptyset = \sum_{j=1}^N C_j R_j^{(\infty)} + Q,$$
where $\{R_j^{(\infty)} \}$ are independent copies of $R$ corresponding to the infinite subtree starting with individual $j$ in the first generation. This derivation provides in particular the existence of a solution in distribution to \eqref{eq:Linear}. 

The set of all solutions to \eqref{eq:Linear} was recently described in \cite{Alsm_Mein_10b} (see Theorem 2.3), where it was shown that all solutions can be obtained from the particular explicit solution $R$ given by \eqref{eq:ExplicitConstr} and a particular nonnegative solution to the fixed point equation
$$W \stackrel{\mathcal{D}}{=} \sum_{i=1}^N C_i^\alpha W_i$$
where $\alpha > 0$ solves $E\left[ \sum_{i=1}^N C_i^\alpha \right] = 1$.  Nonetheless, from an applications perspective, we are interested in the convergence of the process that results from iterating \eqref{eq:Linear}, and we will show that under mild moment conditions on the initial values this procedure always converges to $R$. Hence, the focus of this paper is only on the tail behavior of $R$ as defined by \eqref{eq:ExplicitConstr}. 

As for the solutions to the homogeneous linear equation ($Q \equiv 0$ in \eqref{eq:Linear}), we briefly mention that the set of solutions was fully described in \cite{Alsm_Bigg_Mein_10}, and the power law asymptotics of the particular solution constructed on the weighted branching tree, provided $E\left[ \sum_{i=1}^N C_i \right] = 1$, have been previously established in \cite{Liu_00} and \cite{Iksanov_04}. 

The remainder of the paper is organized as follows. In Section \ref{SS.MomentsLinear} we state moment bounds for $W_n$ and $R$. In Section \ref{SS.Iterations} we describe the process that results from iterating the fixed point equation \eqref{eq:Linear} and show that it converges in distribution to $R$. The main result for the case where the sum of the weights dominates the behavior of $R$ (the equivalent to the case where $N$ dominates in \cite{Jel_Olv_10}) is given in Section \ref{S.NDominates}; and the main result for the case where the behavior of $R$ is dominated by $Q$ is given in Section \ref{S.QDominates}. The proofs of the main results are given in Section \ref{S.Proofs} and some results for weighted random sums, that may be of independent interest, are given in the Appendix.

{\bf Notation:} Recall that throughout the paper the convention is to denote the random vector associated to the root node  $\emptyset$ by $(Q, N,C_1, \dots, C_N) \equiv (Q_\emptyset, N_\emptyset, C_{(\emptyset, 1)}, \dots, C_{(\emptyset, N_\emptyset)})$. We will also use 
$$\rho_\beta = E\left[ \sum_{i=1}^N C_i^\beta\right] \quad \text{for any $\beta > 0$, and} \quad \rho \equiv \rho_1.$$

\subsection{Moments of $W_n$ and $R$} \label{SS.MomentsLinear}

Let $A_{\mathcal{T}} = \bigcup_{n=0}^\infty A_n$ and note that
$$W_n^+ \leq \sum_{i \in A_n} Q_{\bf i}^+ \Pi_{\bf i}, \qquad n \geq 1,$$
$$\text{and} \quad R^+ \leq \sum_{n=0}^\infty W_n^+ \leq \sum_{{\bf i} \in A_{\mathcal{T}}} Q_{\bf i}^+ \Pi_{\bf i},$$
so Lemmas 4.2, 4.3 and 4.4 in \cite{Jel_Olv_11a} apply and we immediately obtain the following results; we use $x \vee y = \max\{x, y\}$.

\begin{prop} \label{P.GeneralMoments}
Assume $E[(Q^+)^\beta] < \infty$. Then,
\begin{enumerate}
\item if $0 < \beta \leq 1$, 
$$E[(W_n^+)^\beta] \leq E[(Q^+)^\beta] \rho_\beta^n,$$
\item if $\beta > 1$, $\rho \vee \rho_\beta < 1$, and $E\left[ \left( \sum_{i=1}^N C_i \right)^\beta \right] < \infty$, there exists a finite constant $K_\beta > 0$ such that
$$E[(W_n^+)^\beta] \leq K_\beta (\rho \vee \rho_\beta)^n,$$
\end{enumerate}
for all $n \geq 0$. 
\end{prop}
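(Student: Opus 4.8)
The plan is to reduce both parts to known moment estimates for the weighted random sum $\sum_{{\bf i}\in A_n} Q_{\bf i}^+ \Pi_{\bf i}$, which is exactly what Lemmas 4.2--4.4 of \cite{Jel_Olv_11a} provide, using the pointwise domination $W_n^+ \le \sum_{{\bf i}\in A_n} Q_{\bf i}^+ \Pi_{\bf i}$ recorded just above the statement. For part (a), with $0<\beta\le 1$ I would use subadditivity of $x\mapsto x^\beta$ to get $E[(W_n^+)^\beta] \le \sum_{{\bf i}\in A_n} E[Q_{\bf i}^{+\,\beta}\Pi_{\bf i}^\beta]$, then condition on the tree structure $\mathcal{T}$ up to generation $n$: given $\mathcal{T}$, each $Q_{\bf i}^+$ is independent of $\Pi_{\bf i}$ (the $Q$ at a node is independent of the $C$'s along the branch leading to it), so the conditional expectation factors as $E[(Q^+)^\beta]\, E\big[\sum_{{\bf i}\in A_n}\Pi_{\bf i}^\beta \,\big|\, \mathcal{T}\big]$. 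Taking expectations and using the standard branching identity $E\big[\sum_{{\bf i}\in A_n}\Pi_{\bf i}^\beta\big] = \rho_\beta^n$ (which follows by induction on $n$ from \eqref{eq:WnRec} with $Q\equiv 1$, or directly from the many-to-one lemma) yields the bound $E[(W_n^+)^\beta]\le E[(Q^+)^\beta]\rho_\beta^n$.

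For part (b), with $\beta>1$ subadditivity fails, so the argument is more delicate and this is where the work of \cite{Jel_Olv_11a} is genuinely needed. The idea there is to split $W_n^+$ into the contribution of the ``large'' weights and a remainder, or equivalently to bound the $\beta$-th moment by a combination of a first-moment term raised to the $\beta$ power and a sum of $\beta$-th moments of individual summands, via a Burkholder/Rosenthal-type inequality applied generation by generation along the recursion $W_n \stackrel{\mathcal{D}}{=}\sum_{k=1}^N C_k W_{(n-1),k}$. Concretely, one conditions on $(N,C_1,\dots,C_N)$ and uses a moment inequality for sums of conditionally independent terms to obtain $E[(W_n^+)^\beta] \le c_\beta\big(E\big[\big(\sum_k C_k\big)^\beta\big]\, (E[W_{n-1}^+])^\beta + \rho_\beta\, E[(W_{n-1}^+)^\beta]\big)$, or a similar recursion; since $\rho<1$ the first-moment factor $E[W_{n-1}^+]\le E[Q^+]\rho^{\,n-1}$ decays geometrically at rate $\rho$, and since $\rho_\beta<1$ the linear recursion in $E[(W_n^+)^\beta]$ is contractive, so iterating gives $E[(W_n^+)^\beta]\le K_\beta(\rho\vee\rho_\beta)^n$ for a finite constant $K_\beta$ depending on $\beta$, $E[(Q^+)^\beta]$, $\rho$, $\rho_\beta$, and $E[(\sum_i C_i)^\beta]$. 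The hypotheses $\rho\vee\rho_\beta<1$ and $E[(\sum_i C_i)^\beta]<\infty$ are precisely what make this iteration converge and keep the constant finite.

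The main obstacle is part (b): controlling a $\beta$-th moment ($\beta>1$) of a random sum of a random number of non-independent terms, which is exactly the technical content of Lemmas 4.2--4.4 in \cite{Jel_Olv_11a}. Since the statement explicitly says those lemmas ``apply and we immediately obtain the following results,'' the proof in the paper is presumably just the two-line domination argument plus a citation; my proposal follows that route, and the only thing to verify carefully is that the hypothesis $E[(Q^+)^\beta]<\infty$ together with $\rho\vee\rho_\beta<1$ and $E[(\sum_i C_i)^\beta]<\infty$ matches the hypotheses of the cited lemmas, and that the domination $R^+\le \sum_{{\bf i}\in A_{\mathcal{T}}} Q_{\bf i}^+\Pi_{\bf i}$ is the relevant one (even though part (b) of this proposition only concerns $W_n$, not $R$). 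I would state part (a) with the conditioning-on-$\mathcal{T}$ factorization written out, and for part (b) simply invoke \cite{Jel_Olv_11a} after noting the pointwise bound, since reproducing the Rosenthal-inequality argument would duplicate that reference.
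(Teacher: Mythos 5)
Your proposal matches the paper's approach exactly: the paper gives no separate proof of this proposition, but simply records the pointwise domination $W_n^+ \le \sum_{{\bf i}\in A_n} Q_{\bf i}^+\Pi_{\bf i}$ and invokes Lemmas 4.2--4.4 of \cite{Jel_Olv_11a}, which is precisely your route (your part-(a) factorization is the content of the cited lemma, and part (b) is delegated to the reference just as the paper does). The only caution is that your parenthetical sketch of the $\beta>1$ recursion, with the constant $c_\beta$ also multiplying the $\rho_\beta\,E[(W_{n-1}^+)^\beta]$ term, would not by itself be contractive; but since you, like the paper, cite the reference rather than reprove it, this does not affect the correctness of the proposal.
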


\begin{lem} \label{L.Moments_R}
Fix $\beta > 0$ and assume $E[ |Q|^\beta] < \infty$. In addition, suppose either (i) $\rho_\beta < 1$ for $0 < \beta < 1$, or (ii) $\rho \vee \rho_\beta < 1$ and $E\left[ \left( \sum_{i=1}^N C_i \right)^\beta \right] < \infty$ for $\beta \geq 1$. Then, $E[ |R|^\gamma] < \infty$ for all $0 < \gamma \leq \beta$. Moreover, if $\beta \geq 1$, $R^{(n)} \stackrel{L_\beta}{\to} R$, where $L_\beta$ stands for convergence in $(E| \cdot |^\beta)^{1/\beta}$ norm. 
\end{lem}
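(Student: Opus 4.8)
The plan is to dominate $R$ pathwise by the nonnegative random variable $\bar R := \sum_{{\bf i} \in A_{\mathcal{T}}} |Q_{\bf i}| \Pi_{\bf i}$ and each $W_n$ by $\bar W_n := \sum_{{\bf i} \in A_n} |Q_{\bf i}| \Pi_{\bf i}$; this is legitimate since $|W_n| \le \bar W_n$ term by term, and hence $|R| \le \sum_{n \ge 0} |W_n| \le \bar R$ once the series $\sum_n W_n$ converges absolutely. The point is that $\bar R$ and $\bar W_n$ are exactly the quantities $R^+$ and $W_n^+$ appearing just before Proposition \ref{P.GeneralMoments}, only with $Q$ replaced by $|Q|$, and the proof of that proposition (Lemmas 4.2--4.4 of \cite{Jel_Olv_11a}) uses only the nonnegativity of the summands; so it applies verbatim and yields, under $E[|Q|^\beta] < \infty$: $E[\bar W_n^\beta] \le E[|Q|^\beta]\, \rho_\beta^n$ for $0 < \beta \le 1$, and $\|\bar W_n\|_\beta \le K_\beta^{1/\beta}(\rho \vee \rho_\beta)^{n/\beta}$ for $\beta \ge 1$ (with $K_1 = E[|Q|]$, and for $\beta > 1$ using also $E[(\sum_{i=1}^N C_i)^\beta] < \infty$). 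I would then split into the two cases of the hypothesis.

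In case (i), $0 < \beta < 1$, subadditivity of $x \mapsto x^\beta$ gives $\bar R^\beta \le \sum_{n \ge 0} \bar W_n^\beta$, so by monotone convergence $E[\bar R^\beta] \le \sum_{n \ge 0} E[\bar W_n^\beta] \le E[|Q|^\beta]\,(1-\rho_\beta)^{-1} < \infty$. Hence $\bar R < \infty$ a.s., the series defining $R$ converges absolutely a.s., $|R| \le \bar R$, and $E[|R|^\beta] < \infty$; the extension to every $0 < \gamma \le \beta$ is immediate from Lyapunov's inequality $E[|R|^\gamma] \le (E[|R|^\beta])^{\gamma/\beta}$.

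In case (ii), $\beta \ge 1$, I would apply Minkowski's inequality to $R^{(m)} - R^{(n)} = \sum_{k=n+1}^m W_k$ to get $\|R^{(m)} - R^{(n)}\|_\beta \le \sum_{k=n+1}^m \|\bar W_k\|_\beta \le K_\beta^{1/\beta}\sum_{k=n+1}^m (\rho \vee \rho_\beta)^{k/\beta}$; since $\rho \vee \rho_\beta < 1$ the geometric series $\sum_k (\rho \vee \rho_\beta)^{k/\beta}$ converges, so $\{R^{(n)}\}$ is Cauchy, hence convergent, in $L_\beta$, say to $\tilde R \in L_\beta$. On the other hand $\beta \ge 1$ forces $\rho = \rho_1 < 1$ and $E[|Q|] \le (E[|Q|^\beta])^{1/\beta} < \infty$, so the $\beta' = 1$ case of the convergence lemma recalled above (Lemma 4.1 of \cite{Jel_Olv_11b}) gives $R^{(n)} \to R$ a.s.; since $L_\beta$ convergence forces a.s. convergence along a subsequence, $\tilde R = R$ a.s. This delivers $E[|R|^\beta] < \infty$ and $R^{(n)} \stackrel{L_\beta}{\to} R$ together, with the explicit rate $\|R - R^{(n)}\|_\beta \le K_\beta^{1/\beta}\sum_{k > n}(\rho \vee \rho_\beta)^{k/\beta} \to 0$, and once more $E[|R|^\gamma] \le (E[|R|^\beta])^{\gamma/\beta} < \infty$ for all $0 < \gamma \le \beta$.

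The argument is essentially bookkeeping, so I do not expect a serious obstacle; the two steps that deserve attention are the observation that Proposition \ref{P.GeneralMoments}, though stated for $W_n^+$ and $Q^+$, transfers without change to $\bar W_n$ and $|Q|$, and the identification in case (ii) of the $L_\beta$-limit of $\{R^{(n)}\}$ with the a.s.\ limit $R$ from \eqref{eq:ExplicitConstr} --- it is this matching of limits that lets one read off both the moment bound and the $L_\beta$ convergence from a single estimate.
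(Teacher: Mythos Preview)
Your proposal is correct and follows essentially the same approach as the paper. The paper does not give a self-contained proof of this lemma; it simply observes the pathwise domination $W_n^+ \le \sum_{{\bf i}\in A_n} Q_{\bf i}^+ \Pi_{\bf i}$, $R^+ \le \sum_{{\bf i}\in A_{\mathcal T}} Q_{\bf i}^+ \Pi_{\bf i}$ and then cites Lemmas 4.2--4.4 of \cite{Jel_Olv_11a}, whose content is precisely the argument you have written out: the moment bound on the nonnegative dominating process (your $\bar W_n$), subadditivity of $x\mapsto x^\beta$ for $\beta\le 1$, and Minkowski plus a Cauchy-in-$L_\beta$ argument for $\beta\ge 1$, with the $L_\beta$ limit identified via the a.s.\ convergence already available from the $\beta'=1$ case.
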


\subsection{Iterations of the fixed point equation} \label{SS.Iterations} 

In this section we describe the process that is obtained from iterating the fixed point equation \eqref{eq:Linear}, since this would be the natural approach to computing the ranks of pages in the World Wide Web example described in the introduction. Next we will show that under some technical conditions, this process converges in distribution to $R$. To this end, note that given an initial condition $R_0^*$ we can iterate \eqref{eq:Linear} to obtain the process 
\begin{equation} \label{eq:Rstar_Rec}
R_{n+1}^* = Q_n^* + \sum_{i=1}^{N_n^*} C_{n,i}^* R_{n,i}^*,
\end{equation}
where $\{ R_{n,i}^* \}_{i \in \mathbb{N}}$ are i.i.d. copies of $R_n^*$ from the previous iteration, independent of $(Q_{n}^*, N_n^*, C_{n,1}^*, \dots, C_{n,N_n}^*)$, and 
$\{Q_{n}^*,N_n^*, C_{n,1}^*,\dots, C_{n,N_n}^*\}_{n \in \mathbb{N}}$ is a vector sequence having the same distribution as the root node of the tree $(Q, N, C_1, \dots, C_N)$ with $N \in \mathbb{N} \cup \{\infty\}$, $\{C_i\}_{i=1}^N \geq 0$ and $P(|Q| > 0) > 0$.

Now, similarly as discussed in Section 2.2 in \cite{Jel_Olv_10}, it is not hard to show that $R_n^*$ admits the following representation
\begin{equation} \label{eq:connection}
R_n^* \stackrel{\mathcal{D}}{=} R^{(n-1)} + W_n(R_0^*),
\end{equation}
where
$$W_n(R_0^*) = \sum_{{\bf i} \in A_n} R_{0,{\bf i}}^* \boldC_{\bf i}$$
and $\{ R_{0, {\bf i}}^* \}$ are i.i.d. copies of $R_0^*$, independent of $\boldC_{\bf (\cdot)}$, $N_{\bf (\cdot)}$ and $Q_{\bf (\cdot)}$. The following lemma shows that $R_n^* \Rightarrow R$ for any initial condition $R_0^*$ satisfying a moment assumption, where $\Rightarrow$ denotes convergence in distribution.

\begin{lem} \label{L.Convergence}
For any $R_0^*$, if $E[|Q|^\beta], E[|R_0^*|^\beta] < \infty$ and $\rho_\beta < 1$ for some $0 < \beta \leq 1$, then
$$R_n^* \Rightarrow R,$$
with $E[|R|^\beta] < \infty$. Furthermore, under these assumptions, the distribution of $R$ is the unique solution with finite absolute $\beta$-moment to recursion \eqref{eq:Linear}. 
\end{lem}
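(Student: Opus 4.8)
The plan is to exploit the representation \eqref{eq:connection}, namely $R_n^* \stackrel{\mathcal{D}}{=} R^{(n-1)} + W_n(R_0^*)$, and show that the remainder term $W_n(R_0^*)$ vanishes in an appropriate sense while $R^{(n-1)}$ converges to $R$. Since by the first displayed Lemma (Lemma 2.1) the hypothesis $E[|Q|^\beta] < \infty$ and $\rho_\beta < 1$ already give $R^{(n-1)} \to R$ a.s. with $E[|R|^\beta] < \infty$, the work is entirely in controlling $W_n(R_0^*) = \sum_{{\bf i} \in A_n} R_{0,{\bf i}}^* \boldC_{\bf i}$. First I would estimate, using subadditivity of $x \mapsto x^\beta$ for $0 < \beta \le 1$ and conditioning on the tree structure together with the independence of the $\{R_{0,{\bf i}}^*\}$ from the weights,
\[
E\bigl[ |W_n(R_0^*)|^\beta \bigr] \le E\left[ \sum_{{\bf i} \in A_n} |R_{0,{\bf i}}^*|^\beta \boldC_{\bf i}^\beta \right] = E[|R_0^*|^\beta] \cdot E\left[ \sum_{{\bf i} \in A_n} \boldC_{\bf i}^\beta \right] = E[|R_0^*|^\beta]\, \rho_\beta^n,
\]
where the last equality is the standard branching identity $E\left[ \sum_{{\bf i} \in A_n} \boldC_{\bf i}^\beta \right] = \rho_\beta^n$ obtained by iterating the one-generation decomposition \eqref{eq:WnRec} (this is the $Q \equiv 1$ specialization of Proposition \ref{P.GeneralMoments}(a), or can be seen directly). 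Since $\rho_\beta < 1$, this bound tends to $0$ geometrically, so $W_n(R_0^*) \to 0$ in $L_\beta$ and hence in probability.

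Next I would combine the two pieces. From \eqref{eq:connection} and the fact that $R^{(n-1)} \Rightarrow R$ (a.s. convergence implies convergence in distribution) together with $W_n(R_0^*) \stackrel{P}{\to} 0$, Slutsky's theorem gives $R^{(n-1)} + W_n(R_0^*) \Rightarrow R$; since $R_n^*$ has the same distribution as $R^{(n-1)} + W_n(R_0^*)$ for each $n$, we conclude $R_n^* \Rightarrow R$, and $E[|R|^\beta] < \infty$ was already noted. For the uniqueness statement, suppose $\widetilde{R}$ is any random variable with $E[|\widetilde{R}|^\beta] < \infty$ whose law solves \eqref{eq:Linear}. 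Then iterating \eqref{eq:Linear} starting from $R_0^* = \widetilde{R}$ produces a process $R_n^*$ with $R_n^* \stackrel{\mathcal{D}}{=} \widetilde{R}$ for every $n$ (because $\widetilde{R}$ is already a fixed point); on the other hand, by the convergence just proved with this choice of initial condition—which is legitimate since $E[|\widetilde{R}|^\beta] < \infty$—we have $R_n^* \Rightarrow R$. Hence $\widetilde{R} \stackrel{\mathcal{D}}{=} R$, establishing uniqueness.

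The only mild obstacle is justifying the moment identity $E\left[ \sum_{{\bf i} \in A_n} \boldC_{\bf i}^\beta \right] = \rho_\beta^n$ and the conditional-independence step in the bound on $E[|W_n(R_0^*)|^\beta]$ with full rigor when $N$ may be infinite; both follow from Tonelli's theorem (all terms are nonnegative) applied on the probability space of the weighted branching tree, conditioning successively on the generations $A_0, A_1, \dots, A_{n-1}$ and using that each node's offspring vector $(N_{\bf i}, C_{({\bf i},1)}, \dots)$ is an independent copy of the root vector and that the $\{R_{0,{\bf i}}^*\}$ are i.i.d.\ and independent of the entire tree. Everything else is routine: the decomposition \eqref{eq:connection} is quoted from the text, Lemma 2.1 supplies $R^{(n-1)} \to R$, and Slutsky plus the uniqueness argument are standard.
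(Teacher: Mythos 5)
Your proposal is correct and follows essentially the same route as the paper: it uses the decomposition \eqref{eq:connection}, bounds $E[|W_n(R_0^*)|^\beta]$ by $E[|R_0^*|^\beta]\rho_\beta^n$ (which is exactly Proposition \ref{P.GeneralMoments}(a) applied to $W_n$ with the $Q_{\bf i}$ replaced by the $R_{0,{\bf i}}^*$), concludes via Slutsky, and obtains uniqueness by starting the iteration at an arbitrary fixed point with finite $\beta$-moment. The only difference is cosmetic: you spell out the moment identity and the uniqueness step that the paper cites or states in one line.
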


\begin{proof}
In view of \eqref{eq:connection}, and since $R^{(n )}\to R$ a.s., the result will follow from Slutsky's Theorem (see Theorem~25.4 in \cite{Billingsley_1995}) once we show that $W_n(R_0^*) \Rightarrow 0$. To this end, recall that $W_n(R_0^*)$ is the same as $W_n$ if we substitute the $Q_{{\bf i}}$ by the $R_{0,{\bf i}}^*$. Then, for every $\epsilon > 0$ we have that
\begin{align*}
P( |W_n(R_0^*)| > \epsilon) &\leq \epsilon^{-\beta} E[ |W_n(R_0^*)|^\beta] \\
&\leq \epsilon^{-\beta} \rho_\beta^n E[|R_0^*|^\beta] ,
\end{align*}
where in the second inequality we applied Proposition \ref{P.GeneralMoments}(a) to both the positive and negative parts. Since by assumption the right-hand side converges to zero as $n \to \infty$, then $R_n^* \Rightarrow R$. Furthermore, $E[|R|^\beta] < \infty$ by Lemma \ref{L.Moments_R}. Clearly, under the assumptions, the distribution of $R$ represents the unique solution to \eqref{eq:Linear}, since any other possible solution with finite absolute $\beta$-moment would have to converge to the same limit.  
\end{proof}

\section{The case when the sum of the weights dominates} \label{S.NDominates}

As mentioned in the introduction, we are interested in analyzing the distributional properties of $R^{(n)}$ and $R$, in particular, when they have a heavy tail behavior. The work in \cite{Jel_Olv_11b} already describes one setting in which 
$$P(R > x) \sim H x^{-\alpha} \qquad \text{as } x \to \infty,$$
where $\alpha > 1$ is a solution to the equation $\rho_\alpha = 1$, $\rho < 1$, $E[|Q|^\alpha]$, and $E\left[ \left( \sum_{i=1}^N C_i \right)^\alpha \right] < \infty$ (plus some other technical conditions). Two other scenarios where $P(R > x)$ is heavy-tailed (in particular, regularly varying) are those when either 
$$Z_N \triangleq \sum_{i=1}^N C_i \qquad \text{or} \qquad Q$$
have regularly varying distributions.

Recall that a function $f$ is regularly varying at infinity with index $-\alpha$, denoted $f \in \mathcal{R}_{-\alpha}$, if $f(x) = x^{-\alpha} L(x)$ for some slowly varying function $L$; and $L: [0, \infty) \to (0, \infty)$ is slowly varying if $\lim_{x \to \infty} L(\lambda x)/L(x) = 1$ for any $\lambda > 0$. 

In this section we focus on the case where $P(Z_N > x) \in \mathcal{R}_{-\alpha}$ for some $\alpha > 1$, and $\rho \vee \rho_\alpha < 1$. The approach that we will follow is similar to that used in \cite{Jel_Olv_10} \S 5, except for the added complexity of allowing the vector $(Q, N, C_1, \dots, C_N)$ to be arbitrarily dependent, and the weights $\{C_i\}_{i=1}^N$ not necessarily identically distributed. We start by stating a lemma that describes the asymptotic behavior of $R^{(n)}$. The proof of this lemma is based on the use of some asymptotic limits for randomly stopped and randomly weighted sums recently developed in \cite{Olvera_11}, and adapted to be used in this setting in Theorem \ref{T.SumPlusQ_ZN} in the Appendix.  The main technical difficulty of extending this lemma to steady state ($R = R^{(\infty)}$) is to develop a uniform bound for $R-R^{(n)}$, which is enabled by the main technical result of the paper, Proposition \ref{P.UniformBound}.  The proof of Lemma~\ref{L.Finite_n} below can be  found in Section \ref{SS.Finite_n}.

\begin{lem} \label{L.Finite_n}
Let $Z_N = \sum_{i=1}^N C_i$ and suppose $\overline{G}(x) = P(Z_N > x) \in \mathcal{R}_{-\alpha}$ with  $\alpha > 1$, $E[|Q|^{\alpha+\epsilon}] < \infty$, $\rho_{\alpha+\epsilon} < \infty$ for some $\epsilon > 0$, $E[Q] > 0$, and $\rho < 1$. Then, for any fixed $n \in \{1, 2, 3,\dots\}$,
\begin{equation} \label{eq:Asymp_R_n}
P(R^{(n)} > x) \sim   \frac{(E[Q])^\alpha}{(1-\rho)^\alpha}  \sum_{k=0}^{n} \rho_\alpha^k (1-\rho^{n-k})^\alpha \, P( Z_N > x)
\end{equation}
as $x \to \infty$, where $R^{(n)}$ was defined in \eqref{eq:R_nDef}.
\end{lem}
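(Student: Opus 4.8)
The plan is to prove \eqref{eq:Asymp_R_n} by induction on $n$, using the recursion \eqref{eq:LinearRecSamplePath}, namely $R^{(n)} = \sum_{j=1}^N C_j R^{(n-1)}_j + Q$, together with the single-step asymptotic result for randomly weighted sums $\sum_{j=1}^N C_j Y_j + Q$ recorded as Theorem~\ref{T.SumPlusQ_ZN} in the Appendix. The base case is $n=1$: here $R^{(1)} = \sum_{j=1}^N C_j Q_j + Q$, and since each $Q_j$ and $Q$ are light-tailed relative to $Z_N$ (they have finite $(\alpha+\epsilon)$-moments), while $P(Z_N > x) \in \mathcal{R}_{-\alpha}$, the heavy tail of $R^{(1)}$ comes entirely from the joint behavior of the weights through $Z_N$. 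Applying Theorem~\ref{T.SumPlusQ_ZN} with $Y_j = R^{(0)}_j = Q_j$ (which has mean $E[Q]$ and is independent of the root vector), one gets $P(R^{(1)} > x) \sim (E[Q])^\alpha P(Z_N > x) + E[(Q^+)^\alpha] P(Z_N > x) \cdot(\text{something})$; the point is that the dominant contribution is $(E[Q])^\alpha (1 + \text{correction from the }Q\text{ term})$, and one checks this equals $\frac{(E[Q])^\alpha}{(1-\rho)^\alpha}\sum_{k=0}^1 \rho_\alpha^k (1-\rho^{1-k})^\alpha$ after simplification (the $k=1$ term vanishes since $1-\rho^0 = 0$, leaving $(1-\rho)^\alpha$ in the numerator which cancels, and the $k=0$ term gives $(E[Q])^\alpha$... one should verify the arithmetic carefully, but structurally this is forced).

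For the inductive step, assume \eqref{eq:Asymp_R_n} holds for $n-1$, so that $P(R^{(n-1)} > x) \sim c_{n-1} P(Z_N > x)$ with $c_{n-1} = \frac{(E[Q])^\alpha}{(1-\rho)^\alpha}\sum_{k=0}^{n-1}\rho_\alpha^k(1-\rho^{n-1-k})^\alpha$, and also $E[R^{(n-1)}] = E[Q]\sum_{j=0}^{n-1}\rho^j = E[Q]\frac{1-\rho^n}{1-\rho}$, which follows by taking expectations in \eqref{eq:LinearRecSamplePath} and using $\rho < 1$. Now apply Theorem~\ref{T.SumPlusQ_ZN} to $R^{(n)} = \sum_{j=1}^N C_j R^{(n-1)}_j + Q$: the $\{R^{(n-1)}_j\}$ are i.i.d., independent of the root vector, with common mean $\mu_{n-1} := E[Q]\frac{1-\rho^n}{1-\rho}$ and regularly varying right tail with tail constant $c_{n-1}$ relative to $P(Z_N>x)$. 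The theorem should produce a contribution $\mu_{n-1}^\alpha P(Z_N > x)$ from the ``mean'' part of the $R^{(n-1)}_j$'s (the part that acts like a deterministic multiple of the weight sum), plus a contribution $\rho_\alpha c_{n-1} P(Z_N > x)$ from the heavy-tailed fluctuations of the $R^{(n-1)}_j$'s being passed through the weights, plus a negligible contribution from $Q$ since $E[|Q|^{\alpha+\epsilon}]<\infty$. Adding these, $P(R^{(n)} > x) \sim (\mu_{n-1}^\alpha + \rho_\alpha c_{n-1}) P(Z_N > x)$, and the remaining task is the purely algebraic identity
\[
\left(\frac{E[Q](1-\rho^n)}{1-\rho}\right)^\alpha + \rho_\alpha \cdot \frac{(E[Q])^\alpha}{(1-\rho)^\alpha}\sum_{k=0}^{n-1}\rho_\alpha^k(1-\rho^{n-1-k})^\alpha = \frac{(E[Q])^\alpha}{(1-\rho)^\alpha}\sum_{k=0}^{n}\rho_\alpha^k(1-\rho^{n-k})^\alpha,
\]
which is immediate: the first term is the $k=0$ summand on the right, and $\rho_\alpha$ times the $k$th summand on the left (indexed $0,\dots,n-1$) is the $(k+1)$st summand on the right.

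The main obstacle is making the application of Theorem~\ref{T.SumPlusQ_ZN} rigorous, i.e., checking that its hypotheses are met at each induction step. Two points need care. First, I must verify the moment conditions propagate: given $E[|Q|^{\alpha+\epsilon}]<\infty$ and $\rho_{\alpha+\epsilon}<\infty$, $\rho<1$, Lemma~\ref{L.Moments_R}(ii) (or a direct argument via Proposition~\ref{P.GeneralMoments}) guarantees $E[|R^{(n-1)}|^\gamma]<\infty$ for $\gamma$ up to some exponent exceeding $\alpha$ — one needs $\rho\vee\rho_\alpha<1$ here, which is part of the standing assumption in Section~\ref{S.NDominates} but is not restated in the lemma, so I should confirm it is in force or argue the weaker $\rho_{\alpha+\epsilon}<\infty$ suffices for the finite-$n$ statement. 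Second, Theorem~\ref{T.SumPlusQ_ZN} must be stated in a form that handles the summand $Y_j$ having a (lighter-order) regularly varying tail of its own of the \emph{same index} $-\alpha$ as $Z_N$, with the tail of $Y_j$ comparable to $P(Z_N>x)$ rather than negligible — this is exactly the ``randomly weighted sum'' regime and is the reason the $\rho_\alpha c_{n-1}$ term appears; I expect this is precisely what the Appendix theorem is designed to deliver, but the bookkeeping of which constant multiplies $P(Z_N>x)$ (and the independence structure needed, since $R^{(n-1)}_j$ depends on $C_j$ only through being attached to node $j$, hence is genuinely independent of the root vector $(Q,N,C_1,\dots,C_N)$) is where the proof's substance lies.
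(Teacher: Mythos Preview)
Your inductive strategy and the algebraic identity for the step $n-1 \to n$ are exactly the paper's approach, and your computation $P(R^{(n)} > x) \sim (\mu_{n-1}^\alpha + \rho_\alpha c_{n-1})\,\overline{G}(x)$ via Theorem~\ref{T.SumPlusQ_ZN} is correct once $P(R^{(n-1)}>x) \sim c_{n-1}\overline{G}(x)$ is in hand. The shift-of-index identity you wrote down is the same one the paper verifies.

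The genuine gap is the base case. Theorem~\ref{T.SumPlusQ_ZN} takes as a \emph{hypothesis} that the summands $X_i$ have a regularly varying right tail $\overline{F}\in\mathcal{R}_{-\alpha}$, with $P(Z_N>x)\sim c\,\overline{F}(x)$; its output is then expressed in units of $\overline{F}(x)$. For $n=1$ your summands are $X_i = Q_i$, and nothing in the lemma's hypotheses forces $P(Q>x)$ to be regularly varying --- you only know $E[|Q|^{\alpha+\epsilon}]<\infty$, so $P(Q>x)=o(\overline{G}(x))$. Theorem~\ref{T.SumPlusQ_ZN} therefore does not apply to $R^{(1)}=\sum C_i Q_i + Q$; there is no $c>0$ with $\overline{G}(x)\sim c\,P(Q>x)$, and the conclusion $\rho_\alpha\overline{F}(x)+c(E[X_1])^\alpha\overline{F}(x)$ is not even well-posed. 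Your own hedging (``$(E[Q])^\alpha P(Z_N>x) + E[(Q^+)^\alpha]P(Z_N>x)\cdot(\text{something})$'') reflects this mismatch. The paper handles $n=1$ separately: it splits on $\{|Q|\le x^\delta\}$ versus $\{|Q|>x^\delta\}$, disposes of the second event by Markov's inequality using $E[|Q|^{\alpha+\epsilon}]<\infty$, and on the first event invokes a different weighted-sum result (Theorem~2.6 of \cite{Olvera_11}) tailored to light-tailed summands with heavy-tailed $Z_N$, yielding $P(\sum C_i Q_i > x)\sim (E[Q])^\alpha\overline{G}(x)$ directly. Only from $n=2$ onward does the induction hypothesis furnish the regularly varying tail for $R^{(n-1)}$ that Theorem~\ref{T.SumPlusQ_ZN} requires.

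Your first worry about moment propagation is easily handled and does \emph{not} need $\rho_\alpha<1$: Theorem~\ref{T.SumPlusQ_ZN} only asks $E[(X_1^-)^{1+\epsilon}]<\infty$ of the summands, and for fixed $n$ this follows from $E[|Q|^{1+\epsilon}]<\infty$ together with $\rho_{1+\epsilon}<\infty$ (the latter since $C_i^{1+\epsilon}\le C_i + C_i^{\alpha+\epsilon}$). Your second worry is spot on for $n\ge 2$ and is exactly what the Appendix theorem delivers.
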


\bigskip

From Lemma \ref{L.Finite_n} one can already guess that, provided $\rho \vee \rho_\alpha < 1$, the tail behavior of $R$ will be
$$P(R > x) \sim \frac{(E[Q])^\alpha}{(1-\rho)^\alpha} \sum_{k=0}^\infty \rho_\alpha^k P(Z_N > x)$$
as $x \to \infty$, assuming that the exchange of limits is justified. As mentioned above, this exchange represents the main technical difficulty in the paper (along with its counterpart for the case when $Q$ dominates the behavior of $R$, discussed in the next section). This result has already been proven in \cite{Jel_Olv_10} for the case where $Q, N, \{ C_i\}$ are all independent and the $\{C_i\}$ are i.i.d. using sample-path arguments, and in \cite{Volk_Litv_10} for the case where $(Q, N)$ is independent of $\{C_i\}$, the $\{C_i\}$ are i.i.d.,  using transform methods and Tauberian theorems. Here we follow the approach from \cite{Jel_Olv_10} where the main tool was a special case of the uniform bound given below. The proof of Proposition \ref{P.UniformBound} is given in Section \ref{SS.UniformBounds}.

\begin{prop} \label{P.UniformBound}
Let $Z_N = \sum_{i=1}^N C_i$ and suppose $\overline{G}(x) = P(Z_N > x) \in \mathcal{R}_{-\alpha}$ with  $\alpha > 1$. Assume further that $E[(Q^+)^{\alpha+\epsilon}] < \infty$ and $\rho_{\alpha+\epsilon} < \infty$ for some $\epsilon> 0$. Fix $\rho \vee \rho_\alpha < \eta < 1$. Then, there exists a finite constant $K = K(\eta,\epsilon) > 0$ such that for all $n \geq 1$ and all $x \geq 1$,
\begin{equation} \label{eq:UniformBound}
P(W_n^+ > x) \leq K  \eta^n P(Z_N > x).
\end{equation}
\end{prop}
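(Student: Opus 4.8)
The plan is to induct on $n$, using the recursion \eqref{eq:WnRec}, $W_n \stackrel{\mathcal{D}}{=} \sum_{k=1}^N C_k W_{(n-1),k}$, and the fact that $W_n^+ \le \sum_{k=1}^N C_k W_{(n-1),k}^+$, to reduce the tail bound for $W_n^+$ to the tail bound for $W_{n-1}^+$ plus the contribution of the weights $\{C_k\}$ themselves. The base case $n=1$ is handled directly: since $W_1 = \sum_{i=1}^N Q_i C_i$ with $\{Q_i\}$ i.i.d. copies of $Q$ independent of $(N, C_1, \dots, C_N)$, one obtains $P(W_1^+ > x) \le K' P(Z_N > x)$ from the randomly weighted sum estimate Theorem \ref{T.SumPlusQ_ZN} in the Appendix (this is where $E[(Q^+)^{\alpha+\epsilon}] < \infty$ and $\rho_{\alpha+\epsilon}<\infty$ enter), after choosing $K$ large enough to absorb the constant. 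For the inductive step, condition on the root vector $(N, C_1, \dots, C_N)$ and write, for a splitting level to be chosen, the event $\{W_n^+ > x\}$ as a union over (i) one of the weighted summands $C_k W_{(n-1),k}^+$ being large, and (ii) all of them being moderate but their sum large. The key structural fact I will exploit is that $\sum_{k=1}^N C_k W_{(n-1),k}^+$ is a randomly weighted sum whose weights $\{C_k\}$ carry the regular variation (through $Z_N$) and whose summands $\{W_{(n-1),k}^+\}$ have, by the induction hypothesis, a tail dominated by $\eta^{n-1} \overline{G}(x)$ together with moments up to order $\alpha + \epsilon$ (controlled via Proposition \ref{P.GeneralMoments}, which gives $E[(W_{n-1}^+)^{\alpha+\epsilon}] \le K_{\alpha+\epsilon}(\rho \vee \rho_{\alpha+\epsilon})^{n-1}$ when $\alpha+\epsilon > 1$, and $E[(W_{n-1}^+)^\beta] \le E[(Q^+)^\beta]\rho_\beta^{n-1}$ for $\beta \le 1$).

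The main technical engine will be a one-step inequality of the form
\begin{equation} \label{eq:OneStepPlan}
P(W_n^+ > x) \le C_1 \, \rho_\alpha \cdot \big(\text{bound for } W_{n-1}^+ \text{ at scale } x\big) + C_2 \, \overline{G}(x) \cdot \big(\text{a moment of } W_{n-1}^+\big),
\end{equation}
where $C_1, C_2$ do not depend on $n$ or $x$. The first term comes from the scenario where a single $W_{(n-1),k}^+$ is of order $x$: summing the induction hypothesis over $k$ and taking expectations produces a factor $E[\sum_k C_k^\alpha] = \rho_\alpha$ times $\eta^{n-1}\overline{G}(x)$ (using that $\overline{G}$ is regularly varying so that $\overline{G}(x/C_k) \approx C_k^\alpha \overline{G}(x)$, made rigorous by a Potter-type bound uniform in $x \ge 1$, with the $\epsilon$-room absorbing the error). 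The second term comes from the ``many moderate summands'' scenario, where a truncation/Markov argument at exponent $\alpha$ (or slightly below/above, using the $\alpha+\epsilon$ moment) together with the regular variation of $Z_N$ yields a bound proportional to $\overline{G}(x)$ times $E[(W_{n-1}^+)^{\alpha}]$ or $E[(W_{n-1}^+)^{\alpha+\epsilon}]$, which by Proposition \ref{P.GeneralMoments} is at most a constant times $(\rho\vee\rho_{\alpha+\epsilon})^{n-1}$. Feeding \eqref{eq:OneStepPlan} into the induction, with induction hypothesis $P(W_{n-1}^+ > x) \le K \eta^{n-1}\overline{G}(x)$, the right-hand side is bounded by $\big(C_1 \rho_\alpha + C_2 (\rho \vee \rho_{\alpha+\epsilon})^{n-1}/(K\eta^{n-1})\big) K \eta^{n-1}\overline{G}(x)$; since $\rho_\alpha < \eta$ and $(\rho\vee\rho_{\alpha+\epsilon})/\eta$ can be taken $< 1$ (shrinking $\epsilon$ if necessary so that $\rho_{\alpha+\epsilon}$ stays below $\eta$, which is possible by continuity of $\beta \mapsto \rho_\beta$ and the hypothesis $\rho_\alpha < \eta$), we can choose $K$ large enough that the bracket is $\le 1$, closing the induction.

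The hard part will be making the ``single large summand'' and ``many moderate summands'' decomposition quantitatively clean while keeping all constants independent of both $n$ and $x$ simultaneously — in particular, controlling the randomly weighted sum $\sum_k C_k W_{(n-1),k}^+$ when $N$ may be infinite and the $\{C_k\}$ are neither independent nor identically distributed, and when $Z_N$ itself is heavy-tailed. This is precisely the situation covered by the weighted-random-sum asymptotics of \cite{Olvera_11} as packaged in Theorem \ref{T.SumPlusQ_ZN}; I expect to invoke a uniform (Potter-bound) version of that estimate rather than its asymptotic form, and the delicate point is verifying that the conditional-on-$(N, C_1, \dots, C_N)$ tail of each $W_{(n-1),k}^+$ enters only through its $\eta^{n-1}$-weighted tail and its moments, so that after taking the outer expectation over the root vector one genuinely recovers the factors $\rho_\alpha$ and $\rho \vee \rho_{\alpha+\epsilon}$ and no extra $n$-dependence leaks in. Handling the contribution of $x$ near $1$ (the ``$x \ge 1$'' rather than ``$x$ large'' requirement) is a minor additional point, dealt with by absorbing it into $K$ since $\overline{G}(x)$ is bounded below on compact sets and $P(W_n^+ > x) \le P(W_n^+ > 1)$ there, with $P(W_n^+>1) \le E[(W_n^+)^\beta] \le $ const $\cdot \eta^n$ for a suitable $\beta$.
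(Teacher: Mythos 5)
Your overall architecture (induction on $n$, a one-step decomposition into ``one summand of order $x$'' versus ``all summands moderate,'' a Potter-type bound producing the factor $\rho_\alpha<\eta$ that closes the induction, and a separate trivial treatment of bounded $x$) is the same as the paper's, but your one-step inequality \eqref{eq:OneStepPlan} cannot be established by the means you propose, and this is where the real difficulty of the proposition lives. The problem is the ``many moderate summands'' term. You want to bound it by $C_2\,\overline{G}(x)$ times a geometrically decaying moment of $W_{n-1}^+$, invoking $E[(W_{n-1}^+)^{\alpha+\epsilon}]\le K_{\alpha+\epsilon}(\rho\vee\rho_{\alpha+\epsilon})^{n-1}$ from Proposition \ref{P.GeneralMoments}. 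But that bound requires $E[(\sum_{i=1}^N C_i)^{\alpha+\epsilon}]=E[Z_N^{\alpha+\epsilon}]<\infty$, which is \emph{false} here: $Z_N$ is regularly varying with index $-\alpha$, so all moments of $W_{n-1}^+$ of order $\ge\alpha$ are unavailable (take $Q\equiv 1$, so $W_1=Z_N$). The only usable moments are of order $\alpha-s<\alpha$, and Markov at that order yields a tail $\asymp(\rho\vee\rho_{\alpha-s})^{n}x^{-\alpha+s}$, which is \emph{not} $O(\overline{G}(x))=O(x^{-\alpha}L(x))$ uniformly in $x$. The paper resolves exactly this tension by splitting into two regimes: for $n\ge c\log x$ the crude moment bound works because the surplus geometric decay $((\rho\vee\rho_{\alpha-s})/\eta)^n\le x^{-\kappa}$ absorbs the loss $x^{s}$; for $n\le c\log x$ it replaces Markov by an exponential (Chernoff-type) bound on the truncated sum (Lemma 3.2 of \cite{Olvera_11}), valid only on the event that $Z_N\le x/a_n$ with an $n$-\emph{dependent} threshold $a_n\asymp(\rho\vee\rho_{1+\delta})^{n/(1+\delta)}$ and that no single $C_i$ is large, with the complementary events $\{Z_N>x/a_n\}$, $\{I_N\ge1\}$, $\{J_N(y)\ge2\}$ each estimated separately so as to produce the factor $(\rho\vee\rho_\alpha)^n\overline{G}(x)$. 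None of this machinery is present in your sketch, and without it the second term of \eqref{eq:OneStepPlan} is simply not true in the form you wrote it.

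A second, more repairable gap is in the ``one large summand'' term: a Potter bound \emph{uniform in $x\ge1$} gives $E[\sum_i\overline{G}((1-\delta)x/C_i)]\le 2(1-\delta)^{-\alpha-\delta}(\rho_{\alpha-\delta}+\rho_{\alpha+\delta})\overline{G}(x)$, and the multiplicative constant in front (at least $2$) cannot be ``absorbed by the $\epsilon$-room'': it multiplies the geometric term in the induction and compounds, so you would need $2(\rho_{\alpha-\delta}+\rho_{\alpha+\delta})<\eta$, which need not hold. The paper instead uses Potter only to justify dominated convergence, concludes that $E\bigl[\sum_i\overline{G}((1-\delta)x/C_i)\bigr]/\overline{G}(x)\le(1-\delta)^{-\alpha-1}\rho_\alpha$ for all $x\ge x_1$, chooses $\delta$ so that $\rho_\alpha(\delta+(1-\delta)^{-\alpha-1})\le\eta$, and proves the proposition only for $x\ge x_1$ (small $x$ being trivial by Markov, as you correctly note). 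You should adopt that version of the estimate; as written, your induction does not close.
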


{\sc Remark:} Note that we can easily obtain a weaker uniform bound by applying the moment estimate on $E[(W_n^+)^\beta]$ from Proposition \ref{P.GeneralMoments}, i.e., $P(W_n^+ > x) \leq E[(W_n^+)^\beta] x^{-\beta} \leq K_\beta (\rho \vee \rho_\beta)^n x^{-\beta}$ for some $0 < \beta < \alpha$, so the tradeoff in \eqref{eq:UniformBound} is a slightly larger geometric term for a lighter tail distribution.

\bigskip

Proposition \ref{P.UniformBound} is the key to establishing that $|R - R^{(n)}|$ goes to zero geometrically fast, which is more precisely stated in the following lemma. 

\begin{lem} \label{L.TailDifference}
Let $Z_N = \sum_{i=1}^N C_i$ and suppose $\overline{G}(x) = P(Z_N > x) \in \mathcal{R}_{-\alpha}$ with  $\alpha > 1$, $E[ |Q|^{\alpha+\epsilon}] < \infty$ and $\rho_{\alpha + \epsilon} < \infty$, for some $\epsilon > 0$, and $E[Q] > 0$.  Assume $\rho \vee \rho_\alpha < 1$, then, for any fixed $0 < \delta < 1$, $n_0 \in \{1,2,\dots\}$ and $\rho \vee \rho_\alpha < \eta < 1$, there exists a finite constant $K > 0$ that does not depend on $\delta$ or $n_0$ such that
$$\lim_{x \to \infty} \frac{P\left(  |R - R^{(n_0)}| > \delta x \right)}{\overline{G}(x)} \leq  \frac{K \eta^{n_0+1}}{\delta^{\alpha+1}  n_0}.$$ 
\end{lem}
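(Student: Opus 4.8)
The plan is to use the decomposition $R - R^{(n_0)} = \sum_{n > n_0} W_n$, which holds a.s. by \eqref{eq:ExplicitConstr} and \eqref{eq:R_nDef}, so that $|R - R^{(n_0)}| \leq \sum_{n > n_0} W_n^+ + \sum_{n > n_0} W_n^-$, and it suffices to bound the tail of each of these two series. Since the hypotheses are symmetric in $Q^+$ and $Q^-$ (replacing $Q$ by $-Q$ does not affect $Z_N$, $\rho$, $\rho_\alpha$, or the moment assumptions), the negative part is handled exactly as the positive part, so I will concentrate on $S := \sum_{n > n_0} W_n^+$. The idea is a standard ``single big jump'' style estimate made quantitative and uniform via Proposition \ref{P.UniformBound}: split the event $\{S > \delta x\}$ according to how the mass $\delta x$ is distributed among the summands $W_n^+$, $n > n_0$, using a geometric weighting to exploit the factor $\eta^n$ in \eqref{eq:UniformBound}.

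Concretely, pick $\eta' \in (\rho \vee \rho_\alpha, \eta)$ and constants $c_n > 0$, $n > n_0$, with $\sum_{n > n_0} c_n = 1$, chosen geometrically, e.g. $c_n = (1-\theta)\theta^{n - n_0 - 1}$ for a suitable $\theta \in (0,1)$ depending only on $\eta'/\eta$; then a union bound gives
\begin{equation*}
P(S > \delta x) \leq \sum_{n > n_0} P\left( W_n^+ > c_n \delta x \right).
\end{equation*}
For $x$ large enough that $c_n \delta x \geq 1$ for the relevant range (one must handle small $n$ where $c_n \delta x$ could be $< 1$ by using that $P(W_n^+ > \cdot) \leq 1$ together with a tail bound that still decays in $n$, or alternatively by only passing to the limit in $x$ at the end), Proposition \ref{P.UniformBound} yields $P(W_n^+ > c_n \delta x) \leq K \eta^n P(Z_N > c_n \delta x)$. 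Now divide by $\overline{G}(x) = P(Z_N > x)$, let $x \to \infty$, and use regular variation of $\overline{G} \in \mathcal{R}_{-\alpha}$ together with the uniform convergence theorem for regularly varying functions (Potter's bounds) to get, for each fixed $n$,
\begin{equation*}
\limsup_{x \to \infty} \frac{P(W_n^+ > c_n \delta x)}{\overline{G}(x)} \leq K \eta^n (c_n \delta)^{-\alpha - \epsilon_0}
\end{equation*}
for any small $\epsilon_0 > 0$ (Potter bounds give an extra $(c_n\delta)^{-\epsilon_0}$-type factor that one absorbs), and then pass the $\limsup$ through the sum. The dominated-convergence / monotone interchange is justified because, with $c_n$ geometric, $\sum_{n > n_0} \eta^n c_n^{-\alpha-\epsilon_0}$ converges provided $\theta$ was chosen so that $\eta \theta^{-\alpha-\epsilon_0} < 1$; summing the geometric series explicitly produces a bound of the form $(\text{const}) \cdot \eta^{n_0 + 1} \delta^{-\alpha - \epsilon_0}$. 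Absorbing $\epsilon_0$ into the exponent (replace $\alpha + \epsilon_0$ by $\alpha + 1$, which is allowed since $\delta < 1$ makes $\delta^{-\alpha-1}$ the larger bound) and noting the extra $1/n_0$ can be extracted from the tail of the geometric sum (or simply inserted since $1/n_0 \leq 1$ would only weaken the statement — more carefully, one arranges the geometric weights so the leading coefficient is $O(\eta^{n_0+1}/n_0)$, which is possible because one is free to let $\theta$ depend mildly on $n_0$), one obtains the claimed inequality with a constant $K$ independent of $\delta$ and $n_0$.

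The main obstacle is the uniform-in-$n$ interchange of limit and sum: one must verify that the geometric factor $\eta^n$ from Proposition \ref{P.UniformBound} genuinely dominates the blow-up $c_n^{-\alpha}$ coming from splitting the threshold $\delta x$ into the small pieces $c_n \delta x$, and that the Potter-bound error terms do not destroy this. This forces the constraint linking $\theta$, $\eta$, and $\alpha$, and is the reason one needs the \emph{geometric} rate in \eqref{eq:UniformBound} rather than just summability; a secondary nuisance is the regime where $c_n \delta x < 1$ (so Proposition \ref{P.UniformBound} does not directly apply), which is dealt with either by the trivial bound $P(\cdot) \leq 1$ combined with the fact that this regime involves only finitely many $n$ for each $x$ and those terms vanish after dividing by $\overline{G}(x) \to 0$, or by invoking the complementary moment bound from the Remark after Proposition \ref{P.UniformBound} to control $P(W_n^+ > y)$ for $y$ bounded. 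Tracking the dependence of the final constant on $\delta$ and $n_0$ carefully — to land exactly on $K\eta^{n_0+1}/(\delta^{\alpha+1} n_0)$ — is then bookkeeping with the geometric series.
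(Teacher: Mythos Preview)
Your overall strategy matches the paper's: write $|R - R^{(n_0)}| \leq \sum_{n>n_0}|W_n|$, apply a union bound with weights $c_n$ summing to $1$, invoke Proposition~\ref{P.UniformBound} on each summand, and control the ratios $\overline{G}(c_n\delta x)/\overline{G}(x)$ via Potter's bounds. The paper differs from you only in the choice of weights: it takes $c_n = n^{-2}/s_{n_0}$ with $s_{n_0} = \sum_{n>n_0} n^{-2} \leq 1/n_0$, splits the sum at a cutoff $m(x)\asymp\sqrt{x}$ (Proposition~\ref{P.UniformBound} below the cutoff, the moment bound of Proposition~\ref{P.GeneralMoments} above it), and absorbs the resulting polynomial factors $n^{2(\alpha+r)}$ via $\sup_{m\geq 1}(\eta_0/\eta)^m m^{2(\alpha+r)}<\infty$ for an auxiliary $\eta_0<\eta$. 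The $1/n_0$ in the stated bound then drops out directly from $s_{n_0}^{\alpha-r}\leq n_0^{-(\alpha-r)}\leq n_0^{-1}$. Your geometric weights are a legitimate alternative, but they do not by themselves produce a factor $1/n_0$.

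This is where your write-up actually goes wrong. The parenthetical ``$1/n_0\leq 1$ would only weaken the statement'' is backwards: dividing the right-hand side by $n_0$ \emph{strengthens} the inequality, so you cannot insert $1/n_0$ for free. Your fallback---letting $\theta$ depend on $n_0$---is dangerous, since then the constants $(1-\theta)^{-\alpha-\epsilon_0}$ and $(1-\eta\theta^{-\alpha-\epsilon_0})^{-1}$ in your geometric sum depend on $n_0$ as well, and hence so would $K$. The correct fix is already sitting in your first line: you introduced $\eta'\in(\rho\vee\rho_\alpha,\eta)$ but never used it. Apply Proposition~\ref{P.UniformBound} with $\eta'$ rather than $\eta$; your geometric-series computation then yields a bound $K_0(\eta')^{n_0+1}/\delta^{\alpha+\epsilon_0}$ with $K_0$ independent of $n_0$ and $\delta$, and the missing factor is recovered from
\[
(\eta')^{n_0+1}=(\eta'/\eta)^{n_0+1}\eta^{n_0+1}\leq \Bigl(\sup_{m\geq 1}m(\eta'/\eta)^{m}\Bigr)\frac{\eta^{n_0+1}}{n_0}.
\]
(One minor slip: with geometric $c_n$ it is \emph{large} $n$, not small $n$, for which $c_n\delta x$ may fall below $1$; the paper's cutoff at $m(x)$ combined with Proposition~\ref{P.GeneralMoments} is precisely what handles that regime, and your suggested use of the moment bound from the Remark after Proposition~\ref{P.UniformBound} does the same job.)
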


\begin{proof}
Fix $\rho \vee \rho_\alpha < \eta_0 < \eta$ and $0 < r < \min\{ \alpha-1, 1 \}$. By Potter's Theorem (see Theorem 1.5.6 (iii) in \cite{BiGoTe1987}), there exists a constant $x_0 = x_0(2, r) \geq 1$ such that for all $x,y \geq x_0$,
\begin{equation} \label{eq:PotterZ_N}
\frac{\overline{G}(y)}{\overline{G}(x)} \leq 2 \max\left\{ (y/x)^{-\alpha + r}, \, (y/x)^{-\alpha-r} \right\}. 
\end{equation}

Now define $s_{n_0} = \sum_{n= n_0+1}^\infty n^{-2} \leq  \int_{n_0}^\infty t^{-2} dt = n_0^{-1}$ and $m(x;\delta, n_0) = \lfloor \sqrt{\delta x/(x_0 s_{n_0})} \rfloor$.  Then, the union bound gives
\begin{align*}
P\left( |R - R^{(n_0)}| > \delta x \right) &\leq P\left( \sum_{n=n_0+1}^\infty |W_n| > \delta x \right) \\
&\leq \sum_{n=n_0+1}^\infty P\left( |W_n| > \delta x n^{-2} / s_{n_0} \right) \\
&\leq \sum_{n = n_0+1}^{m(x;\delta, n_0)} K_0 \eta_0^n \overline{G}(\delta x n^{-2}/s_{n_0}) + \sum_{n=m(x;\delta, n_0) + 1}^\infty \frac{E[|W_n|^{\alpha-r}]}{ (\delta x n^{-2} / s_{n_0})^{\alpha-r} } \\
&\leq \sum_{n = n_0+1}^{m(x;\delta, n_0)} 2 K_0 \eta_0^n \left( \frac{\delta x n^{-2} / s_{n_0}}{x} \right)^{-\alpha- r} \overline{G}(x) + \sum_{n=m(x;\delta, n_0) + 1}^\infty \frac{K_{\alpha-r} (\rho \vee \rho_{\alpha-r})^n}{ (\delta x n^{-2} / s_{n_0})^{\alpha-r} } ,
\end{align*}
where in the third inequality we used Proposition \ref{P.UniformBound} (applied to both the positive and negative parts of $W_n$; $K_0 = K_0(\eta_0,\epsilon)$) and Markov's inequality, and in the fourth one we used \eqref{eq:PotterZ_N} and Proposition \ref{P.GeneralMoments}. It follows that
\begin{align*}
P\left( |R - R^{(n_0)}| > \delta x \right) &\leq \frac{2K_0 (s_{n_0})^{\alpha+r}}{\delta^{\alpha+r}} \sum_{n = n_0+1}^{m(x;\delta, n_0)}  \eta_0^n n^{2(\alpha+r)} \overline{G}( x) \\
&\hspace{5mm} + \frac{K_{\alpha-r} (s_{n_0})^{\alpha-r}}{\delta^{\alpha-r}} \sum_{n= m(x;\delta, n_0) + 1}^\infty \frac{(\rho \vee \rho_{\alpha-r})^n n^{2(\alpha-r)} }{ x^{\alpha-r} } .
\end{align*}
Now note that by the convexity of $f(\theta) = \rho_\theta$ we have $\rho \vee \rho_{\alpha-r} \leq \rho \vee \rho_\alpha < \eta_0$, from where it follows that
\begin{align*}
P\left( |R - R^{(n_0)}| > \delta x \right) &\leq \frac{(2K_0+K_{\alpha-r}) (s_{n_0})^{\alpha-r}}{\delta^{\alpha+r}} \left( \sum_{n = n_0+1}^{ \infty }  \eta_0^n n^{2(\alpha+r)} \overline{G}( x) \right. \\
&\hspace{5mm}  \left. + \sum_{n= m(x;\delta, n_0) + 1}^\infty \eta_0^n n^{2(\alpha+r)} x^{-\alpha+r}  \right) \\
&\leq \frac{2K_0 + K_{\alpha-r}}{\delta^{\alpha+r} n_0^{\alpha-r} (1-\eta)} \cdot \sup_{m \geq 1} (\eta_0/\eta)^m m^{2(\alpha+r)}  \\
&\hspace{5mm} \cdot \left( \sum_{n=n_0+1}^\infty (1-\eta) \eta^n \overline{G}( x) +  \sum_{n=m(x;\delta, n_0) + 1}^\infty (1-\eta) \eta^n x^{-\alpha+r} \right) \\
&\triangleq \frac{K}{\delta^{\alpha+r} n_0^{\alpha-r}} \left( \eta^{n_0+1} \overline{G}( x) + \eta^{m(x;\delta, n_0) + 1} x^{-\alpha+r} \right) ,
\end{align*}
where $K = K(\eta_0, \epsilon, \eta,r)$ does not depend on $\delta$ or $n_0$. It follows that for $\overline{G}(x) = x^{-\alpha} L(x)$, 
$$\lim_{x \to \infty} \frac{P\left( R - R^{(n_0)} > \delta x \right)}{\overline{G}(x)} \leq \frac{K \eta^{n_0+1}}{\delta^{\alpha+1} n_0} \left( 1+ \lim_{x \to \infty} \frac{\eta^{\sqrt{\delta x/(x_0 s_{n_0})}-n_0-1} x^{r}}{L(x)} \right) = \frac{K \eta^{n_0+1}}{\delta^{\alpha+1} n_0}.$$
\end{proof}

\bigskip

Having stated Lemma \ref{L.TailDifference}, we can now prove the main theorem of this section.

\begin{thm} \label{T.Main_N}
Let $Z_N = \sum_{i=1}^N C_i$ and suppose $\overline{G}(x) = P(Z_N > x) \in \mathcal{R}_{-\alpha}$ with  $\alpha > 1$, $E[ |Q|^{\alpha+\epsilon}] < \infty$ and $\rho_{\alpha+\epsilon} < \infty$, for some $\epsilon > 0$, and $E[Q] > 0$. Assume $\rho \vee \rho_\alpha < 1$, then, 
$$P(R > x) \sim \frac{(E[Q])^\alpha}{(1-\rho)^\alpha(1-\rho_\alpha)} P(Z_N > x)$$
as $x \to \infty$, where $R$ was defined in \eqref{eq:ExplicitConstr}.  
\end{thm}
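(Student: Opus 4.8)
The plan is to exploit the two lemmas already in place: Lemma~\ref{L.Finite_n}, which gives the exact tail asymptotics of the finite iterate $R^{(n_0)}$, and Lemma~\ref{L.TailDifference}, which bounds the tail of the remainder $R - R^{(n_0)}$ uniformly in $x$ with a prefactor decaying geometrically in $n_0$. Writing $R = R^{(n_0)} + (R - R^{(n_0)})$, I will sandwich $P(R>x)$ between perturbations of $P(R^{(n_0)}>x)$, divide by $\overline G(x)$, and pass to the limit in the order: first $x\to\infty$ (using regular variation of $\overline G$ together with Lemma~\ref{L.Finite_n}), then $n_0\to\infty$ (the error term being killed by the factor $\eta^{n_0+1}$), and finally $\delta\downarrow 0$. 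Throughout fix $\rho\vee\rho_\alpha<\eta<1$, let $K$ be the constant from Lemma~\ref{L.TailDifference}, and write $c_{n_0} = \frac{(E[Q])^\alpha}{(1-\rho)^\alpha}\sum_{k=0}^{n_0}\rho_\alpha^k(1-\rho^{n_0-k})^\alpha$, which is finite and strictly positive for $n_0\ge 1$. By dominated convergence on the counting measure (bounding the $k$-th summand by $\rho_\alpha^k$, summable since $\rho_\alpha<1$, and using $(1-\rho^{n_0-k})^\alpha\to 1$ for each fixed $k$ since $\rho<1$), one has $c_{n_0}\to \frac{(E[Q])^\alpha}{(1-\rho)^\alpha(1-\rho_\alpha)}$ as $n_0\to\infty$.

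For the upper bound, for any $0<\delta<1$ the inclusion $\{R>x\}\subseteq\{R^{(n_0)}>(1-\delta)x\}\cup\{|R-R^{(n_0)}|>\delta x\}$ gives $P(R>x)\le P(R^{(n_0)}>(1-\delta)x)+P(|R-R^{(n_0)}|>\delta x)$. Since Lemma~\ref{L.Finite_n} yields $P(R^{(n_0)}>y)\sim c_{n_0}\,\overline G(y)$ and $\overline G\in\mathcal R_{-\alpha}$, we get $P(R^{(n_0)}>(1-\delta)x)/\overline G(x)\to c_{n_0}(1-\delta)^{-\alpha}$; combining with Lemma~\ref{L.TailDifference},
$$\limsup_{x\to\infty}\frac{P(R>x)}{\overline G(x)}\le c_{n_0}(1-\delta)^{-\alpha}+\frac{K\eta^{n_0+1}}{\delta^{\alpha+1}n_0}.$$
Letting $n_0\to\infty$ and then $\delta\downarrow0$ gives $\limsup_{x\to\infty}P(R>x)/\overline G(x)\le \frac{(E[Q])^\alpha}{(1-\rho)^\alpha(1-\rho_\alpha)}$.

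For the lower bound, on the event $\{R^{(n_0)}>(1+\delta)x\}\cap\{|R-R^{(n_0)}|\le\delta x\}$ one has $R>x$, hence $P(R>x)\ge P(R^{(n_0)}>(1+\delta)x)-P(|R-R^{(n_0)}|>\delta x)$, and the same computation gives
$$\liminf_{x\to\infty}\frac{P(R>x)}{\overline G(x)}\ge c_{n_0}(1+\delta)^{-\alpha}-\frac{K\eta^{n_0+1}}{\delta^{\alpha+1}n_0}.$$
Letting $n_0\to\infty$ and then $\delta\downarrow0$ yields the matching lower bound, which completes the proof.

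The genuine content of the argument lives entirely in the two input lemmas (and, upstream, in Proposition~\ref{P.UniformBound}), so the only delicate point here is the interchange of the $x\to\infty$ and $n_0\to\infty$ limits; this is exactly what the geometric factor $\eta^{n_0+1}$ in Lemma~\ref{L.TailDifference} is engineered to permit, and once that estimate is available the rest is bookkeeping with regular variation and dominated convergence. Accordingly I do not expect any real obstacle in this final step — the hard work having been moved into the preceding results.
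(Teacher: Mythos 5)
Your proof is correct and follows essentially the same route as the paper: the same sandwich $\{R>x\}$ between events involving $R^{(n_0)}>(1\pm\delta)x$ and $|R-R^{(n_0)}|>\delta x$, the same two input lemmas, and the same order of limits ($x\to\infty$, then $n_0\to\infty$, then $\delta\downarrow 0$). The only difference is organizational — you run separate limsup/liminf bounds and send $c_{n_0}\to H$ by dominated convergence, while the paper estimates $|P(R>x)-H\overline{G}(x)|$ in one three-term triangle inequality with an explicit geometric rate for $|H_{n_0}-H|$ — which is immaterial.
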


{\sc Remarks:} (i) For the case where the $\{C_i\}$ are i.i.d. and independent of $N$, and $P(N > x) \in \mathcal{R}_{-\alpha}$, Lemma~3.7(2) in \cite{Jess_Miko_06} gives
$$P(Z_N > x) \sim (E[ C_1])^\alpha P(N > x) \qquad \text{as } x \to \infty.$$
(ii) Given the previous remark, it follows that Theorem \ref{T.Main_N} generalizes both Theorem 5.1 in \cite{Jel_Olv_10} (for $Q, N, \{C_i\}$ all independent and $\{C_i\}$ i.i.d.) and the corresponding result from Section 3.4 in \cite{Volk_Litv_10} (for $(Q, N)$ independent of $\{C_i\}$, $\{C_i\}$ i.i.d., $E[Q]< 1$ and $E[C] = (1-E[Q])/E[N]$).  (iii) In view of Lemma \ref{L.Finite_n}, the theorem shows that the limits $\lim_{x \to \infty} \lim_{n \to \infty} P(R^{(n)} > x)/ P(N > x)$ are interchangeable.

\bigskip

\begin{proof}[Proof of Theorem \ref{T.Main_N}]
Fix $0< \delta <1$ and $n_0 \geq 1$. Choose $\rho \vee \rho_\alpha < \eta < 1$ and use Proposition~\ref{P.UniformBound} to obtain that for some constant $K_0 > 0$, 
$$P(W_n^+ > x) \leq K_0 \eta^n P(Z_N > x)$$
for all $n \geq 1$ and all $x \geq 1$. Let $H_n = (E[Q])^\alpha (1-\rho)^{-\alpha} \sum_{k=0}^n \rho_\alpha^k (1-\rho^{n-k})^\alpha$ and $H = H_\infty$.  Then,
\begin{align}
\left| P(R > x) -   H P(Z_N > x) \right| 
&\leq \left| P(R > x) -  P(R^{(n_0)} > x) \right| \label{eq:Tail} \\
&\hspace{5mm} + \left| P(R^{(n_0)} > x) -   H_{n_0} P(Z_N > x) \right| \label{eq:FiniteIterations} \\
&\hspace{5mm} + \left| H_{n_0} - H \right| P(Z_N > x). \label{eq:FiniteError}
\end{align}
By Lemma \ref{L.Finite_n}, there exists a function $\varphi(x) \downarrow 0$ as $x \to \infty$ such that 
\begin{equation} \label{eq:FiniteAsymptotics}
 \left| P(R^{(n_0)} > x) -   H_{n_0} P(Z_N > x) \right| \leq \varphi(x) H P(Z_N > x),
 \end{equation}
which can be used to bound \eqref{eq:FiniteIterations}. Next, for \eqref{eq:FiniteError} simply note that
\begin{align*}
\frac{1}{H} \left| H_{n_0} - H \right| &= (1-\rho_\alpha) \left( \sum_{k=0}^\infty \rho_\alpha^k - \sum_{k=0}^{n_0} \rho_\alpha^k (1-\rho^{n_0-k})^\alpha  \right)  \\
&= (1-\rho_\alpha) \sum_{k=0}^{n_0} \rho_\alpha^k (1 - (1-\rho^{n_0-k})^\alpha ) + (1-\rho_\alpha) \sum_{k=n_0+1}^\infty \rho_\alpha^k \\
&\leq (1-\rho_\alpha) \sum_{k=0}^{n_0} \rho_\alpha^k \alpha \rho^{n_0-k}  + \rho_\alpha^{n_0+1} \\
&\leq \left( \alpha (1-\rho_\alpha) (n_0+1) + \rho_\alpha \right) (\rho_\alpha \vee \rho)^{n_0} \\
&\leq \left( \alpha  \sup_{m \geq 1} \left( \frac{\rho_\alpha \vee \rho}{\eta} \right)^m m  \right) \eta^{n_0} \triangleq K' \eta^{n_0} .
\end{align*}
The rest of the proof is basically an analysis of \eqref{eq:Tail}. We start by noting that
\begin{align*}
\left| P(R > x) -  P(R^{(n_0)} > x) \right| &\leq  \left| P\left(R> x, \, |R-R^{(n_0)}| \leq \delta x\right)  -  P\left( R^{(n_0)} > x \right) \right| \\
&\hspace{5mm} + P\left( R > x, \, |R-R^{(n_0)}| > \delta x\right) .
\end{align*}
Also, since
$$P\left(R^{(n_0)}  > (1+\delta) x, \, |R-R^{(n_0)}| \leq \delta x\right)  \leq P\left(R> x, \, |R-R^{(n_0)}| \leq \delta x\right)  \leq P\left(R^{(n_0)} > (1-\delta) x \right) $$
and if $\overline{a} \leq a \leq \underline{a}$, then $|a - b| \leq |\underline{a} - b| +  |\overline{a} - b|$, we have 
\begin{align}
\left| P(R > x) -  P(R^{(n_0)} > x) \right| &\leq \left| P\left(R^{(n_0)} > (1+\delta)x, \, |R - R^{(n_0)}| \leq \delta x\right) - P\left(R^{(n_0)} > x \right) \right| \notag \\
&\hspace{5mm} + \left| P\left( R^{(n_0)} > (1-\delta)x \right) - P\left( R^{(n_0)} > x \right) \right| \notag \\
&\hspace{5mm} + P\left( R > x, \,  |R-R^{(n_0)}| > \delta x\right) \notag \\
&= P\left( R^{(n_0)} > (1-\delta)x \right) - P\left(R^{(n_0)} > (1+\delta)x, \, |R - R^{(n_0)}| \leq \delta x\right) \notag  \\
&\hspace{5mm}  + P\left( R > x, \,  |R-R^{(n_0)}| > \delta x\right) \notag \\
&\leq P\left( R^{(n_0)} > (1-\delta)x \right) - P\left( R^{(n_0)} > (1+\delta)x \right) \label{eq:DeltaDifference} \\
&\hspace{5mm} +2 P\left( |R - R^{(n_0)}| > \delta x \right) . \label{eq:TailDifference}
\end{align}
From \eqref{eq:FiniteAsymptotics} and the observation that $H_{n_0} \leq H$, it follows that \eqref{eq:DeltaDifference} is bounded by
\begin{align*}
&P(R^{(n_0)} > (1-\delta) x) - H_{n_0} P(Z_N > (1-\delta) x) \\
&\hspace{5mm} + H_{n_0} \left( P(Z_N > (1-\delta) x) - P(Z_N > (1+\delta)x) \right) \\
&\hspace{5mm} + H_{n_0} P(Z_N > (1+\delta)x) -  P(R^{(n_0)} > (1+\delta) x)  \\
&\leq \left\{ 2\varphi((1-\delta)x) \frac{\overline{G}((1-\delta)x)}{\overline{G}(x)} + \left( \frac{\overline{G}((1-\delta)x)}{\overline{G}(x)} - \frac{\overline{G}((1+\delta)x)}{\overline{G}(x)} \right) \right\} H P(Z_N > x).
\end{align*}
Moreover, since $\overline{G} \in \mathcal{R}_{-\alpha}$ and $\varphi((1-\delta)x) \to 0$, then 
$$2\varphi((1-\delta)x) \frac{\overline{G}((1-\delta)x)}{\overline{G}(x)} + \left( \frac{\overline{G}((1-\delta)x)}{\overline{G}(x)} - \frac{\overline{G}((1+\delta)x)}{\overline{G}(x)} \right) \to (1-\delta)^{-\alpha} - (1+\delta)^{-\alpha}$$
as $x \to \infty$. To analyze \eqref{eq:TailDifference} use Lemma \ref{L.TailDifference} to obtain
$$\lim_{x \to \infty} \frac{2 P\left( |R - R^{(n_0)}| > \delta x \right)}{H P(Z_N > x)} \leq \frac{K'' \eta^{n_0+1}}{\delta^{\alpha+1} n_0}$$
for any $\rho \vee \rho_\alpha < \eta < 1$ and some constant $K'' > 0$ that does not depend on $\delta$ or $n_0$. 

Finally, by replacing the preceding estimates in \eqref{eq:Tail} - \eqref{eq:FiniteError}, we obtain
\begin{align*}
\lim_{x \to \infty} \left| \frac{P(R > x)}{H P(Z_N > x)} - 1 \right| &\leq (1-\delta)^{-\alpha} - (1+\delta)^{-\alpha} +  \frac{K \eta^{n_0}}{\delta^{\alpha+1}} .
\end{align*}
Since the right hand side can be made arbitrarily small by first letting $n_0 \to \infty$ and then $\delta \downarrow 0$, the result of the theorem follows. 
\end{proof}

\section{The case when $Q$ dominates} \label{S.QDominates}

This section of the paper treats the case when the heavy-tailed behavior of $R$ arises from the $\{Q_{\bf i}\}$, known in the autoregressive processes literature as innovations. This setting is well known in the special case  $N \equiv 1$, since then the linear fixed point equation \eqref{eq:Linear} reduces to 
$$R \stackrel{\mathcal{D}}{=} C R + Q,$$
where $(C, Q)$ are generally dependent. This fixed point equation is the one satisfied by the steady state of the autoregressive process of order one with random coefficients, RCA(1) (see \cite{Kesten_73, Brandt_86, Goldie_91, Grey_94}). The power law asymptotics of the solution $R$ in this context were established in the classical work by Kesten \cite{Kesten_73} (multivariate setting), and through implicit renewal theory in the paper by Goldie \cite{Goldie_91}. In both of these works the assumptions include the existence of an $\alpha > 0$ such that $E[|C|^\alpha] = 1$, $E[|C|^\alpha \log^+|C|] < \infty$, and $E[|Q|^\alpha] < \infty$.  

That the innovations $\{Q_{\bf i}\}$ can give raise to heavy tails when the $\alpha$ mentioned above does not exist  is also well known, see, e.g. \cite{Grey_94, Kon_Mik_05}; the main theorem of this section provides an alternative derivation of the forward implication in Theorem~1 from \cite{Grey_94} (see also Proposition 2.4 in \cite{Kon_Mik_05}) in the more general context of $N \geq 0$.  We also mention that Theorem 1 in \cite{Grey_94} includes the case where $\alpha \in (0, 1]$, which would require a different proof technique from the one in this paper.  

The results presented here are very similar to those in Section \ref{S.NDominates}, and so are their proofs. We will therefore only present the statements and skip most of the proofs. We start with the equivalent of Lemma~\ref{L.Finite_n} in this context; its proof can be found in Section \ref{SS.Finite_n}. 

\begin{lem} \label{L.Finite_nQ}
Suppose $P(Q > x) \in \mathcal{R}_{-\alpha}$, with $\alpha > 1$, $E[ (Q^-)^{1+\epsilon}] < \infty$ and $E[Z_N^{\alpha+\epsilon}] < \infty$, for some $\epsilon > 0$. Then, for any fixed $n \in \{1,2,3,\dots\}$,
$$P(R^{(n)} > x) \sim \sum_{k=0}^n \rho_\alpha^k \, P(Q > x)$$
as $x \to \infty$, where $R^{(n)}$ was defined in \eqref{eq:ExplicitConstr}. 
\end{lem}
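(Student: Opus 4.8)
The plan is to proceed by induction on $n$, mirroring the structure of the proof of Lemma \ref{L.Finite_n} but now with $Q$ playing the role of the heavy-tailed ingredient. Recall the recursion $R^{(n)} = \sum_{j=1}^N C_j R^{(n-1)}_j + Q$, where the $\{R^{(n-1)}_j\}$ are i.i.d. copies of $R^{(n-1)}$, independent of $(Q,N,C_1,\dots,C_N)$. The base case $n=0$ is $R^{(0)} = Q$, so $P(R^{(0)} > x) = P(Q > x) = \rho_\alpha^0 P(Q>x)$ trivially. For the inductive step, I would assume $P(R^{(n-1)} > x) \sim \big(\sum_{k=0}^{n-1}\rho_\alpha^k\big) P(Q>x)$ and then apply the randomly weighted, randomly stopped sum asymptotics from the Appendix — the $Q$-dominated analogue of Theorem \ref{T.SumPlusQ_ZN}, which should give that for a sum $\sum_{j=1}^N C_j X_j + Q$ with the $X_j$ i.i.d. regularly varying of index $-\alpha$ and $Q$ regularly varying of the same index, independent of $(N,C_1,\dots,C_N)$ up to the stated moment conditions, the tail is asymptotically $\big(E[\sum_{j=1}^N C_j^\alpha]\,\lim_{t}P(X_1>t)/P(Q>t) + 1\big)P(Q>x)$. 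Plugging in $X_j = R^{(n-1)}_j$, whose tail constant relative to $P(Q>x)$ is $\sum_{k=0}^{n-1}\rho_\alpha^k$ by the inductive hypothesis, yields tail constant $\rho_\alpha\sum_{k=0}^{n-1}\rho_\alpha^k + 1 = \sum_{k=0}^{n}\rho_\alpha^k$, which is exactly what is claimed.

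The steps, in order: (i) verify the base case; (ii) check that the moment hypotheses of the lemma — $P(Q>x)\in\mathcal{R}_{-\alpha}$, $E[(Q^-)^{1+\epsilon}]<\infty$, $E[Z_N^{\alpha+\epsilon}]<\infty$ — propagate to $R^{(n-1)}$ in the form required by the Appendix theorem; in particular one needs $E[|R^{(n-1)}|^{\alpha'}]<\infty$ for some $\alpha'$ slightly above the index, and control of the left tail of $R^{(n-1)}$, both of which follow from Proposition \ref{P.GeneralMoments} / Lemma \ref{L.Moments_R} applied with $\beta$ just below and just above $\alpha$ (using $\rho_{\alpha+\epsilon}<\infty$, which here is automatic from $E[Z_N^{\alpha+\epsilon}]<\infty$ since $\rho_{\alpha+\epsilon}\le E[Z_N^{\alpha+\epsilon}]$ when $\alpha+\epsilon\ge 1$); (iii) apply the Appendix asymptotic result to the recursion to get the tail of $R^{(n)}$; (iv) simplify the resulting constant to $\sum_{k=0}^n\rho_\alpha^k$. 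One subtlety is that the Appendix theorem presumably requires $(N,C_1,\dots,C_N)$ and $Q$ to be dependent in a controlled way while the $R^{(n-1)}_j$ are independent of everything — this matches the construction, since in \eqref{eq:LinearRecSamplePath} the $R^{(n-1)}_j$ are built from disjoint subtrees and hence independent of $(Q_\emptyset, N_\emptyset, C_{(\emptyset,\cdot)})$.

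The main obstacle I anticipate is not the induction itself but the bookkeeping in step (ii): one must make sure that at each finite stage $n$ the random variable $R^{(n-1)}$ genuinely satisfies the hypotheses of the randomly weighted sum theorem — both the positive tail being regularly varying of index exactly $-\alpha$ (not lighter, which could happen only if $\rho_\alpha$-type cancellation occurred, but here all constants $\sum_{k=0}^{n-1}\rho_\alpha^k$ are strictly positive so the index is preserved) and the appropriate finite moment just above $\alpha$, which requires $\rho_{\alpha+\epsilon}<\infty$ and $E[|Q|^{\alpha+\epsilon}]<\infty$. The latter is slightly delicate because the lemma only assumes $E[(Q^-)^{1+\epsilon}]<\infty$ on the negative side and $P(Q>x)\in\mathcal{R}_{-\alpha}$ on the positive side, so $E[(Q^+)^{\alpha+\epsilon}]=\infty$; hence the Appendix theorem must be the version that only needs the weight moment $\rho_{\alpha+\epsilon}<\infty$ and a matching-index condition on the summands and on $Q$, not a strictly-higher moment on $Q$ itself. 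Assuming that version (Theorem \ref{T.SumPlusQ_ZN}'s $Q$-dominated counterpart, as referenced) is available, the argument closes cleanly; the only remaining work is the elementary algebraic simplification of the tail constant, which I would not spell out in detail.
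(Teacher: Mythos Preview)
Your approach is correct and essentially identical to the paper's: induction on $n$, invoking Theorem~\ref{T.SumPlusQ_Q} at each step, with the same algebra $\rho_\alpha H_{n-1}+1=H_n$. One correction to step~(ii): you claim to need (and to obtain via Proposition~\ref{P.GeneralMoments}) $E[|R^{(n-1)}|^{\alpha'}]<\infty$ for some $\alpha'>\alpha$, but this is both unnecessary and false --- by the inductive hypothesis $P(R^{(n-1)}>x)\in\mathcal{R}_{-\alpha}$, so all moments above $\alpha$ are infinite, and Proposition~\ref{P.GeneralMoments} cannot deliver them since it requires $E[(Q^+)^\beta]<\infty$. You catch this yourself in the final paragraph for $Q$; the same reasoning applies to $R^{(n-1)}$. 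What Theorem~\ref{T.SumPlusQ_Q} actually requires of the summands is only $\overline{F}\in\mathcal{R}_{-\alpha}$ and $E[((R^{(n-1)})^-)^{1+\epsilon}]<\infty$, and the latter follows for each fixed $n$ from $E[(Q^-)^{1+\epsilon}]<\infty$ and $E[Z_N^{1+\epsilon}]<\infty$ by iterating Minkowski's inequality through the recursion; drop the stronger moment claim and the argument is clean.
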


As for the case when $Z_N = \sum_{i=1}^N C_i$ dominates the asymptotic behavior of $R$, we can expect that, 
$$P(R > x) \sim (1-\rho_\alpha)^{-1} P(Q > x),$$
and the technical difficulty is justifying the exchange of limits. The same techniques used in Section~\ref{S.NDominates} can be used in this case as well.  The corresponding version of Proposition \ref{P.UniformBound} is given below. We point out that even though the condition $\rho < 1$ is not necessary for the proportionality constant in Lemma \ref{L.Finite_nQ} to be finite, it is required for the finiteness of $E[|R|^\beta]$ for some $\beta \geq 1$, so it is natural that it appears as part of the hypothesis in all the other results in this section.

\begin{prop} \label{P.UniformBoundQ}
Suppose $P(Q > x) \in \mathcal{R}_{-\alpha}$, with $\alpha > 1$, $E[Z_N^{\alpha+\epsilon}] < \infty$ for some $\epsilon > 0$, and let $\rho \vee \rho_\alpha < \eta < 1$. Then, there exists a constant $K = K(\eta,\epsilon) > 0$ such that for all $n \geq 1$ and all $x \geq 1$,
$$P(W_n > x) \leq K  \eta^n P(Q > x).$$
\end{prop}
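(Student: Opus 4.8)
The plan is to mirror the proof of Proposition~\ref{P.UniformBound}, adapting it to the case where the regularly varying input is $Q$ rather than $Z_N$. Recall from \eqref{eq:WnRec} that $W_n \stackrel{\mathcal D}{=} \sum_{k=1}^N C_k W_{(n-1),k}$ with the $\{W_{(n-1),k}\}$ i.i.d.\ copies of $W_{n-1}$ independent of $(N,C_1,\dots,C_N)$. The idea is to set up an induction on $n$ for an estimate of the form $P(W_n > x) \le K \eta^n \overline{F}(x)$, where $\overline{F}(x) = P(Q>x)$. For the base case $n=0$ we have $W_0 = Q$, so $P(W_0 > x) = \overline{F}(x) \le K\overline{F}(x)$ trivially; more care is needed for small $n$ versus the general inductive step, but the structure is the same as in Proposition~\ref{P.UniformBound}.

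First I would invoke the machinery of randomly weighted sums — specifically the appendix result that plays for $Q$-dominated sums the role that Theorem~\ref{T.SumPlusQ_ZN} plays in the $Z_N$-dominated case — to control the one-step recursion $W_n = \sum_{k=1}^N C_k W_{(n-1),k} + 0$. The point is that if each summand $W_{(n-1),k}$ has a tail dominated by $c_{n-1}\eta^{n-1}\overline{F}(x)$ (uniformly), and the weights $\{C_k\}$ satisfy the moment condition $E[Z_N^{\alpha+\epsilon}]<\infty$, then the randomly weighted sum $\sum_k C_k W_{(n-1),k}$ has a tail bounded by a constant times $(\rho_\alpha + \text{error}) \cdot c_{n-1} \eta^{n-1} \overline{F}(x)$, where the $\rho_\alpha$ factor comes from $E[\sum_k C_k^\alpha]$. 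The key algebraic fact making the induction close is that $\rho \vee \rho_\alpha < \eta$, so a single step loses a factor $\rho_\alpha < \eta$, leaving room to absorb lower-order terms (contributed by $\rho$ via the $\alpha>1$ case) into the gap between $\rho\vee\rho_\alpha$ and $\eta$.

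The main obstacle, exactly as in Proposition~\ref{P.UniformBound}, is obtaining the bound \emph{uniformly in both $n$ and $x$} rather than for each fixed $n$ with an $n$-dependent constant. A naive induction produces a constant that grows with $n$; the fix is to combine the tail estimate from the randomly weighted sums result with the moment bounds from Proposition~\ref{P.GeneralMoments} (which give $E[(W_n^+)^\beta] \le K_\beta(\rho\vee\rho_\beta)^n$ for $\beta<\alpha$), using the moment bound to handle the "bulk" regime $x \le x_n$ for an appropriately chosen threshold $x_n$ growing in $n$, and the sharp tail recursion to handle $x > x_n$. Potter's Theorem is then needed to compare $\overline{F}$ at different scales with a controlled polynomial factor, and the truncation level $x_n$ must be chosen (as $\lfloor \eta^{-n/c}\rfloor$-type, for suitable $c$) so that the polynomial Potter error times $\eta^{-n/c}$ is still summable against the geometric gap. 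Assembling these pieces so that all constants are genuinely independent of $n$ is the delicate part; the rest is the routine bookkeeping already carried out in Section~\ref{SS.UniformBounds} for the $Z_N$ case, which I would follow essentially verbatim with $Q$ in place of $Z_N$ and $E[Z_N^{\alpha+\epsilon}]<\infty$ in place of $\rho_{\alpha+\epsilon}<\infty$, $E[(Q^+)^{\alpha+\epsilon}]<\infty$.
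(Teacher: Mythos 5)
Your overall strategy---induct on $n$ through the recursion \eqref{eq:WnRec}, use the moment bounds of Proposition \ref{P.GeneralMoments} in one regime of $(n,x)$ and a one-step tail recursion in the other, and otherwise transplant the Section \ref{SS.UniformBounds} argument with $\overline{F}(x)=P(Q>x)$ in place of $\overline{G}(x)=P(Z_N>x)$---is the paper's strategy. But the tool you name as the engine of the inductive step is the wrong one, and this is a genuine gap rather than a cosmetic slip. Theorem \ref{T.SumPlusQ_Q} is an asymptotic equivalence for a weighted sum of i.i.d.\ summands with a \emph{fixed} distribution; the threshold in $x$ beyond which it applies depends on that distribution. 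In the induction the summands are $W_{(n-1),k}$, whose law changes with $n$, so invoking the asymptotic theorem at each step reintroduces exactly the $n$-dependence you are trying to eliminate. What actually closes the induction in the paper is the non-asymptotic, finite-$x$ one-iteration bound of Lemma \ref{L.Bound1Iter}, whose error term is isolated as $E[1(I_N(x/T)=0)\sum_{i=1}^N 1(C_iW_{n,i}^+>(1-\delta)x)]$; this term is controlled by the induction hypothesis together with the \emph{fixed-threshold} estimate \eqref{eq:Limit} on $E[\sum_{i=1}^N\overline{F}((1-\delta)x/C_i)]/\overline{F}(x)$ and the numerical choice \eqref{eq:EpsilonChoice}, $\rho_\alpha(\delta+(1-\delta)^{-\alpha-1})\le\eta$, which is what keeps the constant from growing with $n$. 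The indicator $I_N(x/T)=0$ is also essential and absent from your sketch: it guarantees that the induction hypothesis is only ever evaluated at arguments $(1-\delta)x/C_i\ge(1-\delta)T\ge x_1$, i.e.\ above the threshold where it has been established, with the complementary event absorbed by Lemma \ref{L.Weights}.

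Two further points of the adaptation deserve explicit mention. First, Lemma \ref{L.Bound1Iter} is stated under the hypothesis $P(Z_N>x)\le x^{-\alpha}L(x)$; in the present proposition $Z_N$ is no longer assumed regularly varying, so you must supply this via Markov's inequality, $P(Z_N>x)\le E[Z_N^{\alpha+\epsilon}]x^{-\alpha-\epsilon}$, which in fact yields the stronger first term $K_0(\rho\vee\rho_\alpha)^n E[Z_N^{\alpha+\epsilon}]x^{-\alpha-\epsilon}$ that is then dominated by $\overline{F}(x)$ for $x\ge 1$. Second, the auxiliary hypotheses of Lemma \ref{L.Bound1Iter} ($\rho_{\alpha+\epsilon}<\infty$ and $E[(Q^+)^\beta]<\infty$ for $\beta<\alpha$) must be checked from the present assumptions, via $\sum_{i=1}^N C_i^{\alpha+\epsilon}\le Z_N^{\alpha+\epsilon}$ and the regular variation of $P(Q>x)$. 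With these repairs your outline becomes the paper's proof; without them the inductive step does not yield a constant independent of $n$.
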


A sketch of the proof can be found in Section \ref{SS.UniformBounds}.

\bigskip

The corresponding version of Lemma \ref{L.TailDifference} is given below. Its proof is basically identical to that of Lemma~\ref{L.TailDifference} and is therefore omitted. 

\begin{lem} 
Let $Z_N = \sum_{i=1}^N C_i$ and suppose $P(Q > x) \in \mathcal{R}_{-\alpha}$ with  $\alpha > 1$, $E[ Z_N^{\alpha+\epsilon}] < \infty$ for some $\epsilon > 0$, and $E[|Q|^\beta] < \infty$ for all $0 < \beta < \alpha$.  Assume $\rho \vee \rho_\alpha < 1$, then, for any fixed $0 < \delta < 1$, $n_0 \in \{1,2,\dots\}$ and $\rho \vee \rho_\alpha < \eta < 1$, there exists a constant $K > 0$ that does not depend on $\delta$ or $n_0$ such that
$$\lim_{x \to \infty} \frac{P\left(  |R - R^{(n_0)}| > \delta x \right)}{P(Q > x)} \leq  \frac{K \eta^{n_0+1}}{\delta^{\alpha+1} n_0}.$$ 
\end{lem}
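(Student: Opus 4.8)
The plan is to reproduce the proof of Lemma~\ref{L.TailDifference} almost line by line, with the regularly varying tail $\overline{G}(x)=P(Z_N>x)$ used there replaced by $P(Q>x)$ and Proposition~\ref{P.UniformBound} replaced by Proposition~\ref{P.UniformBoundQ}. First I would fix $\rho\vee\rho_\alpha<\eta_0<\eta$ and $0<r<\min\{\alpha-1,1\}$ and apply Potter's Theorem (Theorem~1.5.6(iii) in \cite{BiGoTe1987}) to $P(Q>\cdot)\in\mathcal{R}_{-\alpha}$ to obtain a constant $x_0\ge1$ with
\[
\frac{P(Q>y)}{P(Q>x)}\le 2\max\left\{(y/x)^{-\alpha+r},\ (y/x)^{-\alpha-r}\right\}\qquad\text{for all }x,y\ge x_0 .
\]
Then I would set $s_{n_0}=\sum_{n=n_0+1}^\infty n^{-2}\le n_0^{-1}$ and $m(x;\delta,n_0)=\lfloor\sqrt{\delta x/(x_0 s_{n_0})}\rfloor$, and start from the union bound
\[
P\!\left(|R-R^{(n_0)}|>\delta x\right)\le P\!\left(\sum_{n=n_0+1}^\infty|W_n|>\delta x\right)\le\sum_{n=n_0+1}^\infty P\!\left(|W_n|>\delta x\,n^{-2}/s_{n_0}\right).
\]

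Next I would split the last sum at $m(x;\delta,n_0)$. For $n\le m(x;\delta,n_0)$ one has $\delta x\,n^{-2}/s_{n_0}\ge x_0$, so Proposition~\ref{P.UniformBoundQ} applied to $W_n^+$ together with the Potter estimate bounds the corresponding term by a multiple of $\eta_0^n n^{2(\alpha+r)}P(Q>x)$; summing over $n>n_0$ and using $\sup_{m\ge1}(\eta_0/\eta)^m m^{2(\alpha+r)}<\infty$ this part contributes at most $C\,\delta^{-(\alpha+r)}s_{n_0}^{\alpha+r}\,\eta^{n_0}P(Q>x)$. For $n>m(x;\delta,n_0)$ I would instead use Markov's inequality with the $(\alpha-r)$-th moment and the bound $E[(W_n^+)^{\alpha-r}]\le K_{\alpha-r}(\rho\vee\rho_{\alpha-r})^n$ from Proposition~\ref{P.GeneralMoments}, which applies since $E[|Q|^{\alpha-r}]<\infty$, $\rho\vee\rho_{\alpha-r}<1$ and $E[Z_N^{\alpha-r}]<\infty$ under the present hypotheses; by convexity of $\theta\mapsto\rho_\theta$ one has $\rho\vee\rho_{\alpha-r}\le\rho\vee\rho_\alpha<\eta_0$, so this tail contributes at most $C\,\delta^{-(\alpha-r)}s_{n_0}^{\alpha-r}\,\eta^{m(x;\delta,n_0)+1}x^{-(\alpha-r)}$. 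Writing $P(Q>x)=x^{-\alpha}L(x)$ and using the crude bounds $s_{n_0}\le n_0^{-1}$, $\delta^{-(\alpha\pm r)}\le\delta^{-(\alpha+1)}$, $n_0^{-(\alpha-r)}\le n_0^{-1}$ (valid for $0<\delta<1$, $n_0\ge1$, $r<\min\{\alpha-1,1\}$), the two pieces combine to
\[
\frac{P\!\left(|R-R^{(n_0)}|>\delta x\right)}{P(Q>x)}\le\frac{K\,\eta^{n_0+1}}{\delta^{\alpha+1}n_0}\left(1+\frac{\eta^{\sqrt{\delta x/(x_0 s_{n_0})}-n_0-1}\,x^{r}}{L(x)}\right),
\]
and since $\eta^{\sqrt{x}}$ decays faster than any power of $x$ grows while $1/L(x)$ grows subpolynomially, the parenthesized factor tends to $1$ as $x\to\infty$; taking $\limsup_{x\to\infty}$ then gives the asserted inequality with a constant $K=K(\eta_0,\eta,\epsilon,r)$ independent of $\delta$ and $n_0$.

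The delicate point, and the one I expect to be the main obstacle, is the contribution of the negative parts $W_n^-$, because here $Q$ may take both signs and only the right tail $P(Q>x)$, not $P(|Q|>x)$, is assumed regularly varying. For the long generations $n>m(x;\delta,n_0)$ there is no difficulty: $E[(W_n^-)^{\alpha-r}]\le K_{\alpha-r}(\rho\vee\rho_{\alpha-r})^n$ by Proposition~\ref{P.GeneralMoments} (using $E[|Q|^{\alpha-r}]<\infty$), so Markov's inequality yields exactly the same bound as for $W_n^+$. For the short generations $n\le m(x;\delta,n_0)$ one needs a uniform bound on $W_n^-$ comparable to $P(Q>x)$; noting that $W_n^-$ is the $n$-th generation weight sum of the weighted branching tree built from the root vector $(-Q,N,C_1,\dots,C_N)$, this is supplied by Proposition~\ref{P.UniformBoundQ} applied to that tree, exactly as the positive/negative-part step in the proof of Lemma~\ref{L.TailDifference}, the net effect being only to double $K$. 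With that in place the two displays above go through unchanged; the absolute $\beta$-moments of $R$ used implicitly are finite for every $\beta<\alpha$ by Lemma~\ref{L.Moments_R}, and since every intermediate estimate is uniform in $\delta\in(0,1)$ and $n_0\ge1$, so is the final constant, which completes the argument.
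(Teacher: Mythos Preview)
Your overall plan is exactly the paper's intended argument—the paper says the proof is ``basically identical'' to that of Lemma~\ref{L.TailDifference}, and your translation is carried out correctly for $W_n^+$ and for all generations $n>m(x;\delta,n_0)$.

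The gap is precisely where you flag it, and your proposed fix does not close it. Applying Proposition~\ref{P.UniformBoundQ} to the tree built from $(-Q,N,C_1,\dots,C_N)$ requires the right tail of the ``$Q$''-coordinate of that tree to be regularly varying, i.e.\ $P(-Q>x)=P(Q<-x)\in\mathcal{R}_{-\alpha}$, which is nowhere among the hypotheses. This is where the parallel with Lemma~\ref{L.TailDifference} breaks down: there the regular-variation hypothesis of Proposition~\ref{P.UniformBound} sits on $Z_N$, while $Q$ enters only through the moment condition $E[(Q^+)^{\alpha+\epsilon}]<\infty$, which \emph{is} symmetric under $Q\mapsto -Q$ because Lemma~\ref{L.TailDifference} assumes $E[|Q|^{\alpha+\epsilon}]<\infty$. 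In the present lemma the roles are reversed, and the asymmetric hypothesis falls on $Q$. Under only $E[|Q|^\beta]<\infty$ for all $\beta<\alpha$, a Markov bound gives $P(W_n^->y)\le K_{\alpha-r}(\rho\vee\rho_{\alpha-r})^n y^{-(\alpha-r)}$, and summing this over $n_0<n\le m(x;\delta,n_0)$ produces a contribution of order $x^{-(\alpha-r)}$, which after division by $P(Q>x)=x^{-\alpha}L(x)$ diverges as $x\to\infty$; in fact if, say, $P(Q<-x)\sim x^{-\alpha}(\log x)^2$ (compatible with all the stated hypotheses) one expects $P(W_n^->x)\sim\rho_\alpha^n\,x^{-\alpha}(\log x)^2$, and no bound of the form $K\eta_0^nP(Q>x)$ can hold. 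To make the argument go through you need an additional control on the left tail of $Q$ relative to the right—for instance $P(Q^->x)=O\bigl(P(Q>x)\bigr)$, or $E[(Q^-)^{\alpha+\epsilon'}]<\infty$ for some $\epsilon'>0$—after which the induction underlying Proposition~\ref{P.UniformBoundQ}, run with the \emph{same} comparison tail $\overline{F}(x)=P(Q>x)$, does deliver $P(W_n^->x)\le K\eta_0^n\overline{F}(x)$ and your proof completes verbatim.
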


And finally, the main theorem of this section. The proof again greatly resembles that of Theorem~\ref{T.Main_N} and is therefore omitted. 

\begin{thm} \label{T.MainQ}
Suppose $P(Q > x) \in \mathcal{R}_{-\alpha}$, with $\alpha > 1$, $E[|Q|^\beta] < \infty$ for all $0 < \beta < \alpha$. Assume $\rho \vee \rho_\alpha < 1$, and $E[Z_N^{\alpha+\epsilon}] < \infty$ for some $\epsilon > 0$. Then, 
$$P(R > x) \sim (1-\rho_\alpha)^{-1} P(Q > x)$$
as $x \to \infty$, where $R$ was defined in \eqref{eq:ExplicitConstr}.  
\end{thm}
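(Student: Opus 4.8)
\textbf{Proof proposal for Theorem~\ref{T.MainQ}.} The plan is to follow the template of the proof of Theorem~\ref{T.Main_N} essentially verbatim, with $P(Q>x)$ playing the role of $\overline{G}(x)=P(Z_N>x)$ and with the constants $H_{n_0}=\sum_{k=0}^{n_0}\rho_\alpha^k$, $H=H_\infty=(1-\rho_\alpha)^{-1}$ taking the place of the more complicated $H_{n_0}$ from the previous section. First I would verify that the hypotheses of Theorem~\ref{T.MainQ} imply all the hypotheses needed to invoke Lemma~\ref{L.Finite_nQ}, the (unnumbered) analogue of Lemma~\ref{L.TailDifference} in this section, and Proposition~\ref{P.UniformBoundQ}: indeed $E[|Q|^\beta]<\infty$ for all $0<\beta<\alpha$ gives in particular $E[(Q^-)^{1+\epsilon'}]<\infty$ for small $\epsilon'$, and $E[Z_N^{\alpha+\epsilon}]<\infty$ is assumed directly (note $P(Q>x)\in\mathcal R_{-\alpha}$ and $E[Z_N^{\alpha+\epsilon}]<\infty$ also yield $\rho_{\alpha+\epsilon/2}<\infty$ by H\"older, and $\rho<1$ comes from $\rho\vee\rho_\alpha<1$). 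Fix $0<\delta<1$ and $n_0\ge1$, and choose $\rho\vee\rho_\alpha<\eta<1$; Proposition~\ref{P.UniformBoundQ} then gives $P(W_n>x)\le K_0\eta^n P(Q>x)$ for all $n\ge1$, $x\ge1$.

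Next I would run the same three-term decomposition as in Theorem~\ref{T.Main_N}:
\begin{align*}
\left| P(R>x)-HP(Q>x)\right|
&\le \left| P(R>x)-P(R^{(n_0)}>x)\right| \\
&\quad + \left| P(R^{(n_0)}>x)-H_{n_0}P(Q>x)\right| \\
&\quad + \left| H_{n_0}-H\right| P(Q>x).
\end{align*}
The middle term is handled by Lemma~\ref{L.Finite_nQ}, which supplies a function $\varphi(x)\downarrow 0$ with $|P(R^{(n_0)}>x)-H_{n_0}P(Q>x)|\le \varphi(x)HP(Q>x)$. The last term is a trivial geometric tail estimate, since $\tfrac1H|H_{n_0}-H| = (1-\rho_\alpha)\sum_{k=n_0+1}^\infty \rho_\alpha^k = \rho_\alpha^{n_0+1}\le (\sup_{m\ge1}(\rho_\alpha/\eta)^m)\eta^{n_0+1}$, so it is $\le K'\eta^{n_0}$. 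For the first term I would reproduce the chain of inequalities \eqref{eq:Tail}--\eqref{eq:TailDifference} from the proof of Theorem~\ref{T.Main_N} word for word, splitting on the event $\{|R-R^{(n_0)}|\le\delta x\}$ versus its complement; this bounds $|P(R>x)-P(R^{(n_0)}>x)|$ by a ``$\delta$-fluctuation'' term $P(R^{(n_0)}>(1-\delta)x)-P(R^{(n_0)}>(1+\delta)x)$ plus $2P(|R-R^{(n_0)}|>\delta x)$. The $\delta$-fluctuation term, after inserting the asymptotics of Lemma~\ref{L.Finite_nQ} and using $P(Q>\cdot)\in\mathcal R_{-\alpha}$, converges to $\big((1-\delta)^{-\alpha}-(1+\delta)^{-\alpha}\big)HP(Q>x)$, and the term $2P(|R-R^{(n_0)}|>\delta x)$ is controlled by the section's analogue of Lemma~\ref{L.TailDifference}, giving $\limsup_{x\to\infty}\tfrac{2P(|R-R^{(n_0)}|>\delta x)}{HP(Q>x)}\le \tfrac{K''\eta^{n_0+1}}{\delta^{\alpha+1}n_0}$.

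Putting the three pieces together yields
$$\limsup_{x\to\infty}\left|\frac{P(R>x)}{HP(Q>x)}-1\right| \le (1-\delta)^{-\alpha}-(1+\delta)^{-\alpha} + \frac{K\eta^{n_0}}{\delta^{\alpha+1}},$$
and letting $n_0\to\infty$ first (the term $K\eta^{n_0}/\delta^{\alpha+1}\to 0$ for fixed $\delta$) and then $\delta\downarrow 0$ forces the left side to $0$, which is the claim with $H=(1-\rho_\alpha)^{-1}$. I do not expect any genuinely new obstacle here: the only point requiring a little care is bookkeeping the order of limits — one must send $n_0\to\infty$ before $\delta\downarrow0$ so that the geometric factor $\eta^{n_0}$ kills the $\delta^{-\alpha-1}$ blow-up, exactly as in Theorem~\ref{T.Main_N} — and confirming that the constants produced by Proposition~\ref{P.UniformBoundQ} and the section's version of Lemma~\ref{L.TailDifference} are indeed independent of $\delta$ and $n_0$, which they are by construction. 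All the real analytic work (the uniform bound and the geometric decay of $|R-R^{(n_0)}|$) has already been done in Proposition~\ref{P.UniformBoundQ} and the preceding lemma, so this proof is purely an assembly step.
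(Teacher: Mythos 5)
Your proposal is correct and is exactly the argument the paper intends: the paper omits the proof of Theorem~\ref{T.MainQ}, stating only that it ``greatly resembles that of Theorem~\ref{T.Main_N},'' and your assembly of Lemma~\ref{L.Finite_nQ}, Proposition~\ref{P.UniformBoundQ}, and the section's analogue of Lemma~\ref{L.TailDifference} via the same three-term decomposition (with the simpler constants $H_{n_0}=\sum_{k=0}^{n_0}\rho_\alpha^k$) is precisely that resemblance made explicit. The hypothesis checks and the order of limits ($n_0\to\infty$ before $\delta\downarrow0$) are handled correctly.
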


\section{Proofs}\label{S.Proofs}

\subsection{Finite iterations of $R^{(n)}$} \label{SS.Finite_n}

This section contains the proofs of Lemma \ref{L.Finite_n} and Lemma \ref{L.Finite_nQ}, which refer to the asymptotic behavior of $P(R^{(n)} > x)$ for any finite $n$. 

\begin{proof}[Proof of Lemma \ref{L.Finite_n}]
We proceed by induction in $n$. For $n = 1$ fix $\alpha/(\alpha+\epsilon) < \delta < 1$ and note that by Theorem \ref{T.SumPlusQ_ZN}
\begin{align*}
P(R^{(1)} > x) &= P\left( \sum_{i=1}^{N_\emptyset} C_{(\emptyset,i)} Q_{(\emptyset,i)} + Q_\emptyset > x \right) \\
&= P\left( \sum_{i=1}^N C_i Q_i > x - Q, \, |Q| \leq x^\delta \right) + P\left( \sum_{i=1}^N C_i Q_i > x - Q, \, |Q| >  x^\delta \right),
\end{align*}
where $Q$ is independent of the $\{Q_i\}$ but not of $(N, C_1,\dots, C_N)$.  By Theorem 2.6 in \cite{Olvera_11} and the regular variation of $\overline{G}$,
$$P\left( \sum_{i=1}^N C_i Q_i > x - Q, \, |Q| \leq x^\delta \right) \leq P\left( \sum_{i=1}^N C_i Q_i > x - x^\delta \right) \sim P\left(Z_N > (x-x^\delta)/E[Q] \right) \sim (E[Q])^\alpha \overline{G}(x),$$
as $x \to \infty$, and
\begin{align*}
P\left( \sum_{i=1}^N C_i Q_i > x - Q, \, |Q| \leq x^\delta \right) &\geq P\left( \sum_{i=1}^N C_i Q_i > x +x^\delta\right) - P\left( |Q| > x^\delta \right) \\
&= (E[Q])^\alpha \overline{G}(x) (1+o(1)) - P\left( |Q| > x^\delta \right).
\end{align*}

Now note that by Markov's inequality, 
\begin{align*}
P\left( |Q| >  x^\delta \right) &\leq \frac{E[|Q|^{\alpha+\epsilon}]}{x^{\delta(\alpha+\epsilon)}} = o\left( \overline{G}(x) \right)
\end{align*}
as $x \to \infty$. Therefore,
$$P(R^{(1)} > x) \sim (E[Q])^\alpha \overline{G}(x).$$
Now suppose that we have
$$P(R^{(n)} > x) \sim \frac{(E[Q])^\alpha}{(1-\rho)^\alpha}  \sum_{k=0}^{n} \rho_\alpha^k (1-\rho^{n-k})^\alpha \, \overline{G}(x) \triangleq H_n \overline{G}(x).$$
By Theorem \ref{T.SumPlusQ_ZN}, 
\begin{align*}
P( R^{(n+1)} > x) &= P\left( \sum_{i=1}^N C_i R_i^{(n)} + Q > x \right) \\
&\sim \left( \rho_\alpha + H_n^{-1} (E[R^{(n)}])^\alpha \right) P(R^{(n)} > x) \\
&\sim \left( H_n \rho_\alpha + (E[R^{(n)}])^\alpha \right) \overline{G}(x) . 
\end{align*}
Next, observing that $E[R^{(n)}] = \sum_{i=0}^n E[W_i] = E[Q] \sum_{i=0}^n \rho^i = E[Q] (1-\rho^{n+1})/(1-\rho)$, we obtain
\begin{align*}
H_n \rho_\alpha + (E[R^{(n)}])^\alpha &= H_n \rho_\alpha + (E[Q])^\alpha \frac{(1-\rho^{n+1})^\alpha}{(1-\rho)^\alpha} \\
&=  \frac{(E[Q])^\alpha}{(1-\rho)^\alpha}  \sum_{k=0}^{n} \rho_\alpha^{k+1} (1-\rho^{n-k})^\alpha + \frac{(E[Q])^\alpha}{(1-\rho)^\alpha} (1-\rho^{n+1})^\alpha \\
&= \frac{(E[Q])^\alpha}{(1-\rho)^\alpha}  \sum_{j=1}^{n+1} \rho_\alpha^{j} (1-\rho^{n+1-j})^\alpha + \frac{(E[Q])^\alpha}{(1-\rho)^\alpha} (1-\rho^{n+1})^\alpha = H_{n+1}.
\end{align*}
This completes the proof. 
\end{proof} 

\bigskip

\begin{proof}[Proof of Lemma \ref{L.Finite_nQ}]
We proceed by induction in $n$. By Theorem \ref{T.SumPlusQ_Q},
$$P(R^{(1)} > x) = P\left( \sum_{i=1}^{N_\emptyset} C_{(\emptyset, i)} Q_{(\emptyset,i)} + Q _\emptyset > x \right) \sim \left( \rho_\alpha + 1 \right) P(Q > x)$$
as $x \to \infty$.  

Now suppose that we have
$$P(R^{(n)} > x) \sim \sum_{k=0}^n \rho_\alpha^k \, P(Q > x) \triangleq H_n P(Q > x).$$
Then by Theorem \ref{T.SumPlusQ_Q} again
\begin{align*}
P(R^{(n+1)} > x) &= P\left( \sum_{i=1}^{N_\emptyset} C_{(\emptyset, i)} R_{i}^{(n)} + Q_\emptyset > x \right) \\
&\sim \left( \rho_\alpha + H_n^{-1} \right) P(R^{(n)} > x) \\
&\sim \left( \rho_\alpha H_n + 1  \right) P(Q > x) \\
&= H_{n+1} P(Q > x) .
\end{align*}
\end{proof}

\subsection{Uniform bounds for $P(W_n > x)$} \label{SS.UniformBounds}

This section contains the proof of Proposition \ref{P.UniformBound} and a sketch of the proof of Proposition \ref{P.UniformBoundQ}. The first proof is rather involved, and a great effort goes into obtaining a bound for one iteration of the recursion satisfied by $W_n$, so for the convenience of the reader it is presented separately in Lemma \ref{L.Bound1Iter}.  This lemma can also be used to prove the corresponding uniform bound for $W_n$ in the case when $Q$ dominates the behavior of $R$. Throughout this section assume that $1/L(x)$ is locally bounded on $[1, \infty)$, and recall that if $L(t)$ is slowly varying, then $\lim_{t \to \infty} t^\varepsilon L(t) = \infty$ for any $\varepsilon > 0$.

\begin{lem} \label{L.Bound1Iter}
Suppose that $P(Z_N > x) \leq x^{-\alpha} L(x)$, with $\alpha > 1$ and $L(\cdot)$ slowly varying, $\rho \vee \rho_\alpha < 1$, and $\rho_{\alpha+\epsilon} < \infty$ for some $\epsilon > 0$. Assume further that $E[ (Q^+)^\beta] < \infty$ for any $0 < \beta < \alpha$.  Then, for any $0 < \delta < \min\{(\alpha-1)/2, \epsilon, 1/2\}$ and any $T > 0$,  there exists a finite constant $K = K(\epsilon, \delta, T) > 0$ that does not depend on $n$, such that for all $0 \leq n \leq \frac{\epsilon}{2|\log(\rho \vee \rho_\alpha)|} \log x$ and all $x \geq 1$,
\begin{equation*} 
P(W_{n+1}^+ > x) \leq K (\rho \vee \rho_\alpha)^n x^{-\alpha} L(x) + E\left[ 1\left( \sup_{1\leq i < N+1} C_i \leq x/T \right) \sum_{i=1}^N 1\left( C_i W_{n,i}^+ > (1-\delta)x  \right)  \right] .
\end{equation*}
\end{lem}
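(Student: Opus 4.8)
The plan is to bound one step of the recursion $W_{n+1} \stackrel{\mathcal{D}}{=} \sum_{i=1}^N C_i W_{n,i} + \text{(the }Q\text{-contribution is only at generation }0)$; more precisely, using \eqref{eq:WnRec}, $W_{n+1}^+ \leq \sum_{i=1}^N C_i W_{n,i}^+$ where $\{W_{n,i}\}$ are i.i.d.\ copies of $W_n$ independent of $(N,C_1,\dots,C_N)$. The key idea is a ``single big jump'' style split: either one of the summands $C_i W_{n,i}^+$ is already a constant fraction of $x$ (this is the term kept explicit on the right-hand side of the statement, further restricted to the event that no weight $C_i$ is itself as large as $x/T$), or no single summand is that large and the sum exceeds $x$ only through an accumulation of moderate terms, which we control by moment/truncation estimates that produce the $K(\rho\vee\rho_\alpha)^n x^{-\alpha}L(x)$ term. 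The restriction $n \leq \frac{\epsilon}{2|\log(\rho\vee\rho_\alpha)|}\log x$ is what makes the bookkeeping work: it guarantees $(\rho\vee\rho_\alpha)^{n} \geq x^{-\epsilon/2}$, so geometric factors in $n$ can absorbed into slowly varying / polynomial slack in $x$ when needed.

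The steps, in order. First, fix $0<\delta<\min\{(\alpha-1)/2,\epsilon,1/2\}$ and $T>0$, and decompose
\begin{align*}
P(W_{n+1}^+ > x) &\leq P\Big( \sum_{i=1}^N C_i W_{n,i}^+ > x,\ \sup_{1\leq i<N+1} C_i > x/T \Big) \\
&\quad + E\Big[ 1\Big(\sup_{1\leq i<N+1} C_i \leq x/T\Big) P\Big( \sum_{i=1}^N C_i W_{n,i}^+ > x \,\Big|\, \mathcal{F}\Big) \Big],
\end{align*}
where $\mathcal{F}=\sigma(N,C_1,\dots,C_N)$. For the first term, $\{\sup_i C_i > x/T\}\subseteq\{Z_N>x/T\}$, so it is at most $P(Z_N>x/T)\leq (x/T)^{-\alpha}L(x/T)\leq K T^\alpha x^{-\alpha}L(x)$ for $x\geq 1$ by slow variation (using that $1/L$ is locally bounded, as assumed in the section), and since $n\geq 0$ this is $\leq K T^\alpha (\rho\vee\rho_\alpha)^n \cdot (\rho\vee\rho_\alpha)^{-n} x^{-\alpha}L(x)$ — here one uses $n\leq \frac{\epsilon}{2|\log(\rho\vee\rho_\alpha)|}\log x$ so that $(\rho\vee\rho_\alpha)^{-n}\leq x^{\epsilon/2}$, absorbing the stray $x^{\epsilon/2}$ into $L$ at the price of enlarging $K$; alternatively one keeps a factor $x^{-\alpha+\epsilon/2}L(x)$ which is still $o(x^{-\alpha}L(x)\cdot(\rho\vee\rho_\alpha)^{-n})$. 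Second, on the event $\{\sup_i C_i\leq x/T\}$, split the conditional probability according to whether some $C_i W_{n,i}^+ > (1-\delta)x$: on the complement, no single summand exceeds $(1-\delta)x$, and one bounds $P(\sum_i C_i W_{n,i}^+ > x,\ \max_i C_i W_{n,i}^+ \leq (1-\delta)x\mid\mathcal{F})$ by a combination of Markov's inequality applied to truncated summands (using $E[(W_n^+)^\beta]\leq K_\beta(\rho\vee\rho_\beta)^n$ from Proposition~\ref{P.GeneralMoments} for suitable $\beta\in(1,\alpha)$ and for $\beta=\alpha+\epsilon'$) together with the regularly varying tail $P(W_{n,i}^+ > t\mid\mathcal{F})$ — but since at generation $n$ we do not yet have a uniform regularly varying bound on $W_n$ (that is what we are proving!), the cleaner route is to bound everything by moments: a Fuk--Nagaev / truncation inequality for the conditionally independent sum $\sum_i C_i W_{n,i}^+$ gives, conditionally on $\mathcal{F}$, a bound of the form $C\big(\sum_i C_i^{\alpha-\delta}\big)E[(W_n^+)^{\alpha-\delta}] x^{-(\alpha-\delta)} + \big(\sum_i C_i\big)^{\alpha}\big(E[W_n^+]\big)^{\alpha}x^{-\alpha}(\dots)$, and taking expectations over $\mathcal{F}$ and using $\rho_{\alpha-\delta}<\infty$, $\rho<1$, $E[Z_N^{\alpha+\epsilon}]$-type finiteness (which follows from $\rho_{\alpha+\epsilon}<\infty$ plus the regular variation of $Z_N$), plus the moment bounds on $W_n$, yields a term $\leq K(\rho\vee\rho_\alpha)^n x^{-\alpha}L(x)$ after again using $n\leq \frac{\epsilon}{2|\log(\rho\vee\rho_\alpha)|}\log x$ to convert a $(\rho\vee\rho_{\alpha-\delta})^n x^{-(\alpha-\delta)}$ into something dominated by $(\rho\vee\rho_\alpha)^n x^{-\alpha}L(x)$. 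Third, collecting the ``some $C_iW_{n,i}^+>(1-\delta)x$'' piece (still carrying the indicator $1(\sup_i C_i\leq x/T)$) and bounding it crudely by the union bound $\sum_i 1(C_iW_{n,i}^+>(1-\delta)x)$ inside the expectation gives exactly the second term in the statement.

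The main obstacle I expect is the middle step: getting a clean conditional tail bound for the sum $\sum_{i=1}^N C_iW_{n,i}^+$ on the event that no summand is large, with constants uniform in $N$ and in the (arbitrary, possibly non-identically-distributed) weights $\{C_i\}$, using only moment information on $W_n$ rather than its (not-yet-established) regular variation. The trick is to truncate each $W_{n,i}^+$ at level $(1-\delta)x/C_i$, apply a moment inequality of the right order (exponent just below $\alpha$ for the main term, exponent $1$ for the mean-drift term, exploiting $\rho<1$ so the mean of the sum is $\rho^{n+1}E[W_0^+]$-ish and hence small), and then integrate over $\mathcal{F}$, where the dependence between $N$ and the $C_i$'s is harmless because everything is expressed through $\sum_i C_i^{\beta}$ and $(\sum_i C_i)^\gamma$, whose expectations are finite by $\rho_{\alpha+\epsilon}<\infty$ and the regular variation of $Z_N$. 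The constraint on $n$ is used precisely to keep the ``overshoot'' of the truncated-sum contribution (a term like $x^{-(\alpha-\delta)}$ or $x^{-\alpha+\epsilon/2}$) below the target order $x^{-\alpha}L(x)$ after multiplying back the allowed geometric factor $(\rho\vee\rho_\alpha)^{-n}\leq x^{\epsilon/2}$.
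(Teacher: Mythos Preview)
Your overall strategy (single-big-jump split, keep the ``one large summand'' piece explicit, bound the rest by moment/truncation estimates) matches the paper's, but two of the three steps contain genuine gaps.

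\medskip
\textbf{Step 1 (the $\{\sup_i C_i > x/T\}$ piece).} Bounding $P(\sup_i C_i > x/T) \leq P(Z_N > x/T) \leq K x^{-\alpha}L(x)$ is too weak: the target is $K(\rho\vee\rho_\alpha)^n x^{-\alpha}L(x)$, and the missing factor $(\rho\vee\rho_\alpha)^{-n}$ can be as large as $x^{\epsilon/2}$ on your range of $n$. Your claim that $x^{\epsilon/2}$ can be ``absorbed into $L$ at the price of enlarging $K$'' is false---$L$ is slowly varying and $x^{\epsilon/2}$ is not. The fix is easy but different from what you wrote: use the union bound and Markov's inequality with exponent $\alpha+\epsilon$ to get $P(\sup_i C_i > x/T) \leq E\big[\sum_i 1(C_i>x/T)\big] \leq \rho_{\alpha+\epsilon}(T/x)^{\alpha+\epsilon}$, which has an extra $x^{-\epsilon}$ that, combined with $x^{-\epsilon/2}\leq (\rho\vee\rho_\alpha)^n$, does the job. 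This is exactly how the paper handles it (Lemma~\ref{L.Weights}).

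\medskip
\textbf{Step 2 (the ``no summand large'' piece).} Two problems. First, you invoke ``$E[Z_N^{\alpha+\epsilon}]$-type finiteness (which follows from $\rho_{\alpha+\epsilon}<\infty$ plus the regular variation of $Z_N$)''. This is wrong: under the hypotheses $Z_N$ has a tail of order $x^{-\alpha}L(x)$, so $E[Z_N^{\alpha+\epsilon}]=\infty$ in general, and $\rho_{\alpha+\epsilon}=E[\sum_i C_i^{\alpha+\epsilon}]$ is the \emph{smaller} quantity, not the larger one. Second, a Fuk--Nagaev bound with exponent $\alpha-\delta$ gives, after integrating over $\mathcal{F}$, a term of order $\rho_{\alpha-\delta}(\rho\vee\rho_{\alpha-\delta})^n x^{-(\alpha-\delta)}$. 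This is off from the target $(\rho\vee\rho_\alpha)^n x^{-\alpha}L(x)$ by a genuine polynomial factor $x^{\delta}/L(x)$, which your $n$-constraint cannot repair (it only buys you $x^{\epsilon/2}$ of slack, and $\delta$ is fixed independently of $n$). The paper avoids this by a finer decomposition: it first splits off $\{Z_N > x/a_n\}$ with $a_n\asymp(\rho\vee\rho_{1+\delta})^{n/(1+\delta)}$---this is where the factor $(\rho\vee\rho_\alpha)^n x^{-\alpha}L(x)$ actually enters, via Potter's theorem (Lemma~\ref{L.Potter})---and then, on $\{Z_N\leq x/a_n\}$, truncates the summands at level $y=x/\log x$ and applies an exponential (Chernoff-type) bound to the truncated sum (Lemma~\ref{L.TruncSum}), which decays faster than any polynomial and hence is harmless. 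The ``two or more summands exceed $y$'' event is controlled separately (Lemma~\ref{L.SecondMax}). Your single Fuk--Nagaev step is not sharp enough to replace this layered argument.
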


To ease the reading of the proof of Lemma \ref{L.Bound1Iter} we will split it into several lemmas. To avoid repetition we give below most of the definitions that will be used. We start by defining
\begin{align*}
I_N(t) &= \#\{ 1 \leq i < N+1: C_i > t\} \\
J_N(t) &= \#\{ 1 \leq i < N+1: C_i W_{n,i}^+ > t\}.
\end{align*}
For the same $\epsilon > 0$ and $0 < \delta < \min\{(\alpha-1)/2, \epsilon, 1/2\}$ from the statement of the lemma, define $\gamma_n = || W_n^+||_{1+\delta} = (E[(W_n^+)^{1+\delta}])^{1/(1+\delta)}$. We also define  
$$\nu = \epsilon/(2(\alpha+\epsilon)),  \qquad y = x/\log x, \qquad w = x^{1-\nu}, \qquad a_n = \delta^{-2} E[Q^+] (\rho \vee \rho_{1+\delta})^{n/(1+\delta)}.$$

Before going into the proof, we would like to emphasize that special care goes into making sure that $K$ in the statement of the lemma does not depend on $n$. This is important since Lemma~\ref{L.Bound1Iter} will be applied iteratively in the proof of Proposition \ref{P.UniformBound}, where one does not want $K$ to grow from one iteration to the next.

\begin{lem} \label{L.TruncSum}
Under the assumptions of Lemma \ref{L.Bound1Iter}, there exists a constant $K_1 = K_1(\delta,\nu, c)$ such that for all $x \geq 1$ and all $0 \leq n \leq c \log x$,
$$P\left(\sum_{i=1}^N C_i W_{n,i}^+ 1(C_i W_{n,i}^+ \leq y)  > \delta x, \, Z_N \leq x/a_n, \, I_N(w/\gamma_n) = 0 \right) \leq K_1 (\rho \vee \rho_\alpha)^n x^{-\alpha} L(x).$$
\end{lem}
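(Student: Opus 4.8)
The plan is to control the truncated weighted sum $S := \sum_{i=1}^N C_i W_{n,i}^+ \mathbf{1}(C_i W_{n,i}^+ \leq y)$ on the event where also $Z_N \leq x/a_n$ and no individual weight $C_i$ exceeds $w/\gamma_n$, and to show the probability that $S > \delta x$ is geometrically small in $n$ times $\overline{G}(x) \asymp x^{-\alpha}L(x)$. The natural tool is an exponential/Chernoff bound applied to $S$ conditionally on the vector $(N, C_1, \dots, C_N)$ restricted to the event $\{Z_N \leq x/a_n,\ I_N(w/\gamma_n) = 0\}$; on this event every summand is bounded (by $y$) and the $C_i$'s are bounded (by $w/\gamma_n$), so a Bennett/Bernstein-type inequality is available. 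First I would write, for $\theta>0$ to be chosen, the Markov step
\begin{equation*}
P(S > \delta x,\, Z_N \leq x/a_n,\, I_N(w/\gamma_n)=0) \leq e^{-\theta \delta x}\, E\!\left[\mathbf{1}(Z_N \leq x/a_n,\, I_N(w/\gamma_n)=0)\prod_{i=1}^N E\!\left[e^{\theta C_i W_{n,i}^+ \mathbf{1}(C_iW_{n,i}^+\leq y)}\mid C_i\right]\right],
\end{equation*}
using independence of the $W_{n,i}$ from $(N,C_1,\dots,C_N)$ and from each other.

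Next I would estimate the inner conditional MGF. Using $e^{u}\leq 1 + u + \tfrac12 u^2 e^{u}$ and the truncation $C_iW_{n,i}^+\leq y$, bound $E[e^{\theta C_iW_{n,i}^+\mathbf{1}(\cdot)}\mid C_i] \leq 1 + \theta C_i E[W_{n,i}^+] + c\,\theta^2 e^{\theta y} C_i^{1+\delta}\gamma_n^{1-\delta}$ or a similar expression, where the key point is that the second-moment-type term involves $C_i^{1+\delta}$ (so that summing over $i$ produces $\rho_{1+\delta}^n$-type factors via $\gamma_n^{1+\delta} = E[(W_n^+)^{1+\delta}]\leq E[(Q^+)^{1+\delta}]\rho_{1+\delta}^n$ from Proposition~\ref{P.GeneralMoments}) and the first-order term involves $C_i$, whose sum is $Z_N\leq x/a_n$. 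Then $\prod_i(1+\cdots)\leq \exp(\sum_i\cdots)$, and on the event in question $\sum_i C_i = Z_N \leq x/a_n$, so choosing $\theta$ of order $(\log x)/x$ (recall $y = x/\log x$, so $\theta y \asymp 1$ keeps $e^{\theta y}$ bounded) makes the first-order contribution $\theta\, E[W_n^+]\, Z_N$ of constant order, the quadratic contribution small, and leaves the dominant factor $e^{-\theta\delta x} \asymp e^{-c'\delta \log x} = x^{-c'\delta}$. Optimizing $\theta$ (or just taking $\theta = \kappa(\log x)/x$ with $\kappa$ large) yields a bound of the form $x^{-\alpha-\delta'}$ times a harmless polylog, which is $o((\rho\vee\rho_\alpha)^n x^{-\alpha}L(x))$ uniformly for $n\leq c\log x$ because $(\rho\vee\rho_\alpha)^n \geq (\rho\vee\rho_\alpha)^{c\log x} = x^{-c|\log(\rho\vee\rho_\alpha)|}$ stays polynomially bounded below, and one can arrange the exponents so that the extra decay beats it.

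The main obstacle, and where the bookkeeping has to be done carefully, is keeping the constant $K_1$ independent of $n$ while the MGF estimate is iterated implicitly through $\gamma_n$ and $a_n$: one must verify that $\theta y$ stays bounded, that $\theta \gamma_n \to 0$ (or at least stays controlled) uniformly in $n\leq c\log x$, and that the interplay between the factor $x^{-\theta\delta x/\log x \cdot \log x/x}$ from Chernoff and the polynomial-in-$x$ size of $(\rho\vee\rho_\alpha)^{-n}$ comes out in favor of the claimed bound. Concretely, since $n \leq c\log x$ and $\rho\vee\rho_{1+\delta}<1$, we have $a_n \geq \delta^{-2}E[Q^+] x^{-c'}$ for an explicit $c'$, so $x/a_n \leq \delta^2 (E[Q^+])^{-1} x^{1+c'}$; this polynomial growth of the allowed range of $Z_N$ is exactly compensated by taking $\theta$ large enough and by the slowly varying correction, using $\lim_{t\to\infty}t^\varepsilon L(t)=\infty$ for the lower bound on $\overline G(x)$. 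Once the exponents are balanced, collecting all $n$-free multiplicative constants into a single $K_1 = K_1(\delta,\nu,c)$ finishes the proof; I would state the choice of $\theta$ explicitly and then present the three estimates (Chernoff step, MGF bound, exponent balancing) as short displayed computations.
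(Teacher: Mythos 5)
Your proposal is correct and follows essentially the same route as the paper: the paper's proof invokes Lemma 3.2 of \cite{Olvera_11}, which is precisely the conditional Chernoff bound you derive by hand on the event $\{Z_N\le x/a_n,\ I_N(w/\gamma_n)=0\}$, and then inserts $E[W_n^+]\le E[Q^+]\rho^n$ and $\gamma_n\le K(\rho\vee\rho_{1+\delta})^{n/(1+\delta)}\le K' a_n$ from Proposition \ref{P.GeneralMoments} and balances exponents against $(\rho\vee\rho_\alpha)^{-n}\le x^{c|\log(\rho\vee\rho_\alpha)|}$ and $1/L(x)$ exactly as you describe. The only cosmetic differences are the choice of tilt (the paper takes $\theta=(\delta/y)\log(y/w)$, so $\theta y\to\infty$ and the bound is $e^{-c(\log x)^2}$ rather than your $x^{-\kappa\delta}$ with $\kappa$ large; both beat the required polynomial rate) and a small slip in your narrative: with $\theta=\kappa(\log x)/x$ the first-order term $\theta E[W_n^+]Z_N$ is of order $\delta^2\kappa\log x$, not $O(1)$, but since it is at most $\delta$ times the main term $\theta\delta x=\kappa\delta\log x$ your exponent accounting still closes.
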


\begin{proof}
By Lemma 3.2 in \cite{Olvera_11} (with $v = y$, $u = w$, $z = \delta x$, $\eta = 1+\delta$, and $A = (-\infty, x/a_n]$), there exists a constant $K_{1,1} = K_{1,1}(\delta) > 1$ such that \eqref{eq:TruncSum} is bounded by
\begin{align*}
&E\left[ 1(Z_N \leq x/a_n) e^{-\frac{\delta}{y} \log(y/w) \left(\delta x - \left( E[W_n^+] + \frac{K_{1,1} || W_n^+ ||_{1+\delta}}{\log (y/w)} \right)^+ Z_N \right)} \right] \\
&\leq e^{-\frac{\delta}{y} \log(y/w) \left(\delta x - \left( E[W_n^+] + \frac{K_{1,1} || W_n ||_{1+\delta}}{\log (y/w)} \right)^+ x/a_n \right)} \\
&\leq e^{-\frac{\delta}{y} \log(y/w) \left(\delta x - \left( E[Q^+] \rho^n + \frac{K_{1,1} \gamma_n }{\log (y/w)} \right) x/a_n \right)},
\end{align*}
where we used $E[W_n^+] \leq E[Q^+] \rho^n$ and $\gamma_n = ||W_n^+||_{1+\delta}$. From Proposition \ref{P.GeneralMoments} we know that $||W_n^+||_{1+\delta} = \left( E[(W_n^+)^{1+\delta}] \right)^{1/(1+\delta)} \leq K_{1,2} (\rho \vee \rho_{1+\delta})^{n/(1+\delta)} \leq K_{1,2} (\delta^2/E[Q^+]) a_n$, where $K_{1,2} = K_{1,2}(\delta) > 0$ is a finite constant. It follows that \eqref{eq:TruncSum} is bounded by
\begin{align}
e^{-\frac{\delta}{y} \log(y/w) \left(\delta x - \left( \delta^2 (\rho \vee \rho_{1+\delta})^{n\delta/(1+\delta)} + \frac{K_{1,1} K_{1,2} \delta^2 }{E[Q^+] \log (y/w)} \right) x \right)} &\leq e^{-\delta^2 \log x \log(y/w) \left(1 - \delta - \frac{K_{1,1} K_{1,2} \delta}{E[Q^+] \log (y/w)} \right)} \notag  \\
&= e^{-\delta^2\nu (\log x)^2 \left(1 - \frac{\log \log x}{\nu \log x} \right) \left(1 - \delta - \frac{K_{1,1} K_{1,2} \delta}{E[Q^+] \log (x^\nu/\log x)} \right)} \notag \\
&\leq e^{-\delta^2 \nu (1-2\delta)^2 (\log x)^2}, \label{eq:ExpBound}
\end{align}
where the last inequality holds for all $x \geq x_1$ for some $x_1 = x_1(\delta, \nu) > 0$. Now we choose $x_2 = x_2(\delta,\nu,c) \geq x_1$ such that $\delta^2 \nu(1-2\delta)^2 \log x \geq \alpha+\delta + c|\log (\rho \vee \rho_\alpha)|$ for all $x \geq x_2$ to obtain that \eqref{eq:ExpBound} is bounded by
\begin{equation*}
e^{-(\alpha+\delta) \log x - c|\log(\rho \vee \rho_\alpha)| \log x} = \frac{1}{x^{\alpha+\delta}} (\rho \vee \rho_\alpha)^{c\log x},
\end{equation*}
for all $x \geq x_2$. Next, define $K_{1,3} = K_{1,3}(\delta, \nu, c)$ as
$$K_{1,3} = \sup_{1 \leq t \leq x_2} \frac{1}{t^{-\alpha-\delta} (\rho \vee \rho_\alpha)^{c\log t}} \cdot e^{-\delta^2\nu (\log t)^2 \left(1 - \frac{\log \log t}{\nu \log t} \right) \left(1 - \delta - \frac{K_{1,1} K_{1,2} \delta}{E[Q^+] \log (t^\nu/\log t)} \right)} < \infty,$$
to obtain that 
$$P\left(\sum_{i=1}^N C_i W_{n,i}^+ 1(C_i W_{n,i}^+ \leq y)  > \delta x, \, Z_N \leq x/a_n, \, I_N(w/\gamma_n) = 0 \right) \leq  \frac{K_{1,3}}{x^{\alpha+\delta}} (\rho \vee \rho_\alpha)^{c\log x} \leq \frac{K_{1,3}}{x^{\alpha+\delta}} (\rho \vee \rho_\alpha)^{n}$$
for all $x \geq 1$ and $0 \leq n \leq c\log x$.  Finally, set $K_1 = K_{1,3} \sup_{t \geq 1} (t^\delta L(t))^{-1}$ to complete the proof. 
\end{proof}

\bigskip

\begin{lem} \label{L.SecondMax}
Under the assumptions of Lemma \ref{L.Bound1Iter}, there exists a finite constant $K_2 = K_2(\delta, \nu)$ such that for all $x \geq 1$,
$$P\left( J_N(y) \geq 2, \, Z_N \leq x/a_n, \, I_N(w/\gamma_n) = 0 \right) \leq K_2  (\rho \vee \rho_\alpha)^n x^{-\alpha} L(x).$$
\end{lem}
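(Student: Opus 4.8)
The plan is to bound the probability of the event $\{J_N(y) \geq 2,\, Z_N \leq x/a_n,\, I_N(w/\gamma_n) = 0\}$ by controlling the chance that at least two of the products $C_i W_{n,i}^+$ exceed the threshold $y = x/\log x$, while simultaneously no individual weight $C_i$ exceeds $w/\gamma_n = x^{1-\nu}/\gamma_n$. The event $I_N(w/\gamma_n)=0$ forces every $C_i \leq x^{1-\nu}/\gamma_n$, so on this event each factor $W_{n,i}^+$ must itself be large — specifically $W_{n,i}^+ > y\gamma_n/(x^{1-\nu}) = \gamma_n x^\nu/\log x$ — in order for $C_i W_{n,i}^+ > y$. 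First I would union-bound over the (unordered) pairs $\{i,j\}$ of distinct indices contributing to $J_N(y)\ge 2$, writing
\[
P\bigl(J_N(y)\geq 2,\, Z_N \leq x/a_n,\, I_N(w/\gamma_n)=0\bigr) \leq E\left[ \sum_{1\leq i<j<N+1} P\bigl(C_i W_{n,i}^+ > y,\, C_j W_{n,j}^+ > y \,\big|\, \mathcal{F}\bigr) \right],
\]
where $\mathcal{F} = \sigma(N, C_1, \dots, C_N)$ and we have dropped the $I_N, Z_N$ constraints to get an upper bound inside, or more carefully keep them to save a factor. Conditionally on $\mathcal{F}$ the $W_{n,i}^+$ are i.i.d., so the inner probability factors as $P(W_{n,1}^+ > y/C_i \mid \mathcal F)\,P(W_{n,1}^+ > y/C_j \mid \mathcal F)$.

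The key estimate is a Markov/Chebyshev bound on each factor at level $1+\delta$: $P(W_{n,1}^+ > y/C_i \mid \mathcal F) \leq \gamma_n^{1+\delta}(C_i/y)^{1+\delta}$, using $\gamma_n = \|W_n^+\|_{1+\delta}$. Multiplying the two factors and summing over pairs gives a bound of order
\[
\gamma_n^{2(1+\delta)} y^{-2(1+\delta)} E\left[ \Bigl(\sum_{i=1}^N C_i^{1+\delta}\Bigr)^2 \right] \; \leq \; \gamma_n^{2(1+\delta)} y^{-2(1+\delta)} E\left[ Z_N^{1+\delta}\Bigl(\sum_i C_i^{1+\delta}\Bigr) \right]
\]
after bounding $\sum C_i^{1+\delta} \le Z_N^{1+\delta}$ for one of the two factors (since $1+\delta \ge 1$ and $C_i \ge 0$, each $C_i^{1+\delta} \le C_i Z_N^\delta$, hence $\sum_i C_i^{1+\delta} \le Z_N^{1+\delta}$). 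Then I would invoke the restriction $Z_N \le x/a_n$ to replace one power $Z_N^{1+\delta} \le (x/a_n)^{1+\delta}$, leaving $E[\sum_i C_i^{1+\delta}] = \rho_{1+\delta} < \infty$. Since $\gamma_n \leq K_{1,2}(\rho\vee\rho_{1+\delta})^{n/(1+\delta)}$ by Proposition~\ref{P.GeneralMoments}, the net power of the decaying geometric is $\gamma_n^{2(1+\delta)} \le K' (\rho\vee\rho_{1+\delta})^{2n}$, and recalling $a_n = \delta^{-2}E[Q^+](\rho\vee\rho_{1+\delta})^{n/(1+\delta)}$ one gets $(x/a_n)^{1+\delta} = x^{1+\delta} a_n^{-(1+\delta)}$, whose $a_n$ factor contributes $(\rho\vee\rho_{1+\delta})^{-n}$. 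Collecting powers: $\gamma_n^{2(1+\delta)} a_n^{-(1+\delta)} \sim (\rho\vee\rho_{1+\delta})^{2n - n} = (\rho\vee\rho_{1+\delta})^{n}$, and $y^{-2(1+\delta)} x^{1+\delta} = (x/\log x)^{-2(1+\delta)} x^{1+\delta} = x^{-(1+\delta)}(\log x)^{2(1+\delta)}$. The whole bound is therefore of order $(\rho\vee\rho_{1+\delta})^n x^{-(1+\delta)}(\log x)^{2(1+\delta)}$.

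The remaining, and slightly delicate, step is to absorb this into the claimed form $K_2 (\rho\vee\rho_\alpha)^n x^{-\alpha}L(x)$. Since $1+\delta < \alpha$ and $\delta < (\alpha-1)/2$, we have $x^{-(1+\delta)}(\log x)^{2(1+\delta)} = x^{-\alpha} \cdot x^{\alpha-1-\delta}(\log x)^{2(1+\delta)}$, and because $x^\varepsilon L(x)\to\infty$ for every $\varepsilon>0$, the factor $x^{\alpha-1-\delta}(\log x)^{2(1+\delta)}/L(x) \to \infty$ — but that is the wrong direction; the clean way is instead to note we have extra room in $x$: the exponent $1+\delta$ can be traded, since $x^{-(1+\delta)} \le x^{-\alpha}$ fails but $x^{-(1+\delta)}(\log x)^{c} \le C_\delta\, x^{-1-\delta/2}$ is not enough either. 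The correct bookkeeping is to use only one Markov factor at exponent $1+\delta$ and the second at a higher exponent, or — cleaner — to note that the paper's target allows $L$, so we need $x^{-(1+\delta)}(\log x)^{2(1+\delta)} \le K\, x^{-\alpha}L(x)$ for $x\ge 1$, which holds because $\alpha > 1+\delta$ makes $x^{\alpha-(1+\delta)}L(x)(\log x)^{-2(1+\delta)} \to \infty$, so the ratio is bounded below, equivalently $x^{-(1+\delta)}(\log x)^{2(1+\delta)} / (x^{-\alpha}L(x))$ is bounded above on $[1,\infty)$; set $K_2$ to that supremum times the earlier constants. I expect the main obstacle to be exactly this last comparison of regularly varying and power functions together with tracking that the constant $K_2$ depends only on $\delta$ and $\nu$ (not on $n$), which requires using Proposition~\ref{P.GeneralMoments}'s $n$-independence of $K_{1,2}$ and the fact that the geometric exponents combine to give precisely $(\rho\vee\rho_\alpha)^n$ after invoking convexity of $\theta\mapsto\rho_\theta$ to pass from $\rho\vee\rho_{1+\delta}$ back up to $\rho\vee\rho_\alpha$ if desired (though here we may simply keep $\rho\vee\rho_{1+\delta}\le 1$ and note $(\rho\vee\rho_{1+\delta})^n \le 1$, then absorb).
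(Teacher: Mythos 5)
Your opening moves --- the union bound over pairs, the conditional factorization given $\mathcal{F}=\sigma(N,C_1,\dots,C_N)$, and a Markov bound on each factor --- are exactly the paper's strategy, but the proof does not close because you use the exponent $1+\delta$ for \emph{both} factors. That leaves a bound of order $(\rho\vee\rho_{1+\delta})^{n}\,x^{-(1+\delta)}(\log x)^{2(1+\delta)}$, and you then need $x^{-(1+\delta)}(\log x)^{2(1+\delta)}\le K\,x^{-\alpha}L(x)$ on $[1,\infty)$. This is false: the ratio equals $x^{\alpha-1-\delta}(\log x)^{2(1+\delta)}/L(x)$, which tends to $+\infty$ since $\alpha-1-\delta>0$ and $x^{\varepsilon}/L(x)\to\infty$ for slowly varying $L$. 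You in fact notice this ("wrong direction") but then reverse the conclusion: the quantity $x^{\alpha-(1+\delta)}L(x)(\log x)^{-2(1+\delta)}$ you invoke is the \emph{reciprocal} written incorrectly --- the relevant reciprocal is $x^{-(\alpha-1-\delta)}L(x)(\log x)^{-2(1+\delta)}\to 0$, so the supremum you propose to take as $K_2$ is infinite. A tail bound decaying like $x^{-(1+\delta)}$ simply cannot be dominated by $x^{-\alpha}L(x)$ when $1+\delta<\alpha$.

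The repair is precisely the alternative you mention and then discard: use \emph{different} exponents on the two factors. The paper keeps exponent $1+\delta$ on one factor and, crucially, uses the event $I_N(w/\gamma_n)=0$ (which you never exploit) to write $\sum_{i}C_i^{1+\delta}\gamma_n^{1+\delta}\le (w/\gamma_n)^{\delta}\gamma_n^{1+\delta}Z_N=w^{\delta}\gamma_n Z_N$, so that only a single power of $Z_N\le x/a_n$ is spent and the ratio $\gamma_n/a_n$ stays bounded uniformly in $n$. The second factor is bounded by Markov at the exponent $\beta=\alpha-\delta\nu/2>1$, giving $y^{-\beta}E[(W_n^+)^{\beta}]\sum_i C_i^{\beta}$ with $E[(W_n^+)^{\beta}]\le K(\rho\vee\rho_{\beta})^{n}$ by Proposition \ref{P.GeneralMoments} (legitimate here because $P(Z_N>x)\le x^{-\alpha}L(x)$ gives $E[Z_N^{\beta}]<\infty$ for $\beta<\alpha$). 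Multiplying the factors, the net power of $x$ is $w^{\delta}\,x\,y^{-(1+\delta+\beta)}=x^{-\alpha-\delta\nu/2}(\log x)^{\beta+1+\delta}$, and the surplus $x^{-\delta\nu/2}$ now absorbs both the logarithms and $1/L(x)$ via $\sup_{t\ge 1}(\log t)^{\beta+1+\delta}/(t^{\nu\delta/2}L(t))<\infty$; convexity of $\theta\mapsto\rho_{\theta}$ then upgrades $(\rho\vee\rho_{\beta})^{n}$ to $(\rho\vee\rho_{\alpha})^{n}$. Your geometric bookkeeping ($\gamma_n^{2(1+\delta)}a_n^{-(1+\delta)}\asymp(\rho\vee\rho_{1+\delta})^{n}$) is fine; it is only the power of $x$ that breaks, and it is fixed by the asymmetric choice of exponents.
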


\begin{proof}
Let $\mathcal{F} = \sigma(N , C_1, \dots, C_N)$ and use the union bound to obtain
\begin{align*}
&P\left( J_N(y) \geq 2, \, Z_N \leq x/a_n, \, I_N(w/\gamma_n) = 0 \right) \\
&= E\left[ 1(Z_N \leq x/a_n, \, I_N(w/\gamma_n) = 0) E\left[ \left. 1\left( \bigcup_{1\leq i < j < N+1} \{ C_i W_{n,i}^+ > y, C_j W_{n,j}^+ > y\}  \right) \right| \mathcal{F} \right] \right] \\
&\leq E\left[ 1(Z_N \leq x/a_n, \, I_N(w/\gamma_n) = 0) \sum_{1\leq i< j < N+1}E\left[ \left. 1\left( C_i W_{n,i}^+ > y, C_j W_{n,j}^+ > y \right) \right| \mathcal{F} \right] \right] \\
&\leq E\left[ 1(Z_N \leq x/a_n, \, I_N(w/\gamma_n) = 0) \left( \sum_{i=1}^N E\left[  \left. 1\left( C_i W_{n,i}^+ > y \right) \right| C_i \right] \right)^2  \right],
\end{align*} 
where in the last step we used the conditional independence of $C_i W_{n,i}^+$ and $C_j W_{n,j}^+$ given $\mathcal{F}$. Now, by Markov's inequality,
\begin{align*}
1(I_N(w/\gamma_n) = 0) \sum_{i=1}^N E\left[  1\left( C_i W_{n,i}^+ > y \right) | C_i \right]  &\leq 1(I_N(w/\gamma_n) = 0)  \sum_{i=1}^N \frac{E[ (C_i W_{n,i}^+)^{1+\delta} | C_i]}{y^{1+\delta}} \\
&= \frac{1}{y^{1+\delta}} 1(I_N(w/\gamma_n) = 0)  \sum_{i=1}^N C_i^{1+\delta} \gamma_n^{1+\delta} \\
&\leq \frac{1}{y^{1+\delta}} (w/\gamma_n)^\delta \gamma_n^{1+\delta} Z_N = \frac{w^\delta}{y^{1+\delta}} \, \gamma_n Z_N.
\end{align*}
Similarly, for $\beta = \alpha-\delta\nu/2 > 1$, 
$$ \sum_{i=1}^N E\left[  1\left( C_i W_{n,i}^+ > y \right) | C_i \right] \leq  \sum_{i=1}^N \frac{E[ (C_i W_{n,i}^+)^\beta | C_i]}{y^\beta}  \leq \frac{E[ (W_n^+)^\beta]}{y^\beta} \sum_{i=1}^N C_i^\beta.$$
It follows that
\begin{align}
P\left( J_N(y) \geq 2, \, Z_N \leq x/a_n, \, I_N(w/\gamma_n) = 0 \right)  &\leq E\left[ 1(Z_N \leq x/a_n)  Z_N \sum_{i=1}^N C_i^\beta \right]   \frac{w^\delta}{y^{\beta + 1+\delta}} \, \gamma_n E[ (W_n^+)^\beta]  \notag \\
&\leq E\left[ \sum_{i=1}^N C_i^\beta \right]   \frac{w^\delta x}{y^{\beta + 1+\delta}} \, \frac{\gamma_n}{a_n} E[ (W_n^+)^\beta] \notag \\
&= \rho_\beta  \frac{(\log x)^{\beta+1+\delta} }{x^{\alpha +\delta\nu/2}} \, \frac{\gamma_n}{a_n} E[ (W_n^+)^\beta] . \label{eq:BoundSecondMax}
\end{align}
By Proposition \ref{P.GeneralMoments}, there exists a constant $K_{2,1} = K_{2,1}(\beta) > 0$ such that $E[(W_n^+)^\beta] \leq K_{2,1} (\rho \vee \rho_\beta)^n$. This combined with the observation $\gamma_n \leq K_{1,2} (\delta^2/E[Q^+]) a_n$ for some constant $K_{1,2} = K_{1,2}(\delta)$ made in the proof of Lemma \ref{L.TruncSum} gives that \eqref{eq:BoundSecondMax} is bounded by
\begin{equation*} 
\left( \rho_\beta \frac{K_{1,2} K_{2,1} \delta^2}{E[Q^+]}  \frac{(\log x)^{\beta+1+\delta}}{x^{\delta\nu/2} L(x)} \right) (\rho \vee \rho_\beta)^n x^{-\alpha} L(x) \leq  \left(\rho_\beta \frac{K_{1,2} K_{2,1} \delta^2}{E[Q^+]}   \sup_{t \geq 1} \frac{(\log t)^{\beta+1+\delta}}{t^{\nu\delta/2} L(t)}  \right)  (\rho \vee \rho_\beta)^n x^{-\alpha} L(x)
\end{equation*}
for all $x \geq 1$. The convexity of $f(\theta) = \rho_\theta$ gives $\rho \vee \rho_\beta \leq \rho \vee \rho_\alpha$, which completes the proof. 
\end{proof}

\bigskip

\begin{lem} \label{L.Weights}
Under the assumptions of Lemma \ref{L.Bound1Iter}, there exists a finite constant $K_3 = K_3(\delta,\epsilon, T)$ such that for all $x \geq 1$,
$$P\left( I_N(w/\gamma_n) \geq 1 \right) + P\left( I_N(w/\gamma_n) = 0, \, I_N(x/T) \geq 1 \right) \leq K_{3} \left( (\rho \vee \rho_\alpha)^n   + \frac{1}{x^{\epsilon/2}} \right) x^{-\alpha} L(x).$$
\end{lem}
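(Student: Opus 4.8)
The plan is to control both terms by Markov's inequality applied to the \emph{count} of large weights, at the power $\alpha+\epsilon$ (the largest exponent for which $\rho_{\alpha+\epsilon}<\infty$), and then to absorb the resulting surplus power of $x$ into the slowly varying function, exactly as in the last step of the proof of Lemma~\ref{L.TruncSum}. Since $1/L$ is locally bounded on $[1,\infty)$ and $t^{\epsilon/2}L(t)\to\infty$, the constant $M:=\sup_{t\ge1}(t^{\epsilon/2}L(t))^{-1}$ is finite, so $x^{-\alpha-\epsilon/2}\le M\,x^{-\alpha}L(x)$ for all $x\ge1$; this identity will be applied to each piece.

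For the second term I would simply discard the constraint $\{I_N(w/\gamma_n)=0\}$ and bound $P(I_N(w/\gamma_n)=0,\,I_N(x/T)\ge1)\le P(I_N(x/T)\ge1)\le E[I_N(x/T)]$. Using $1(C_i>x/T)\le(C_iT/x)^{\alpha+\epsilon}$ and summing over $i$ gives $E[I_N(x/T)]\le\rho_{\alpha+\epsilon}T^{\alpha+\epsilon}x^{-(\alpha+\epsilon)}$; writing $x^{-(\alpha+\epsilon)}=x^{-\epsilon/2}\cdot x^{-\alpha-\epsilon/2}\le M\,x^{-\epsilon/2}x^{-\alpha}L(x)$ places this piece inside the $x^{-\epsilon/2}$ part of the bound.

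For the first term, $P(I_N(w/\gamma_n)\ge1)\le E[I_N(w/\gamma_n)]\le\rho_{\alpha+\epsilon}(\gamma_n/w)^{\alpha+\epsilon}$ by the same estimate. The choice $\nu=\epsilon/(2(\alpha+\epsilon))$ is exactly what makes $w^{-(\alpha+\epsilon)}=x^{-(1-\nu)(\alpha+\epsilon)}=x^{-\alpha-\epsilon/2}$, again absorbed by $M$. The geometric factor comes from $\gamma_n$: since $1<1+\delta<\alpha$ (because $\delta<(\alpha-1)/2$), one has $E[Z_N^{1+\delta}]<\infty$ (from $P(Z_N>x)\le x^{-\alpha}L(x)$), and by convexity of $\theta\mapsto\rho_\theta$ also $\rho\vee\rho_{1+\delta}\le\rho\vee\rho_\alpha<1$, so Proposition~\ref{P.GeneralMoments}(b) applies with $\beta=1+\delta$ and gives $\gamma_n\le K_{1+\delta}^{1/(1+\delta)}(\rho\vee\rho_{1+\delta})^{n/(1+\delta)}$. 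Raising to the power $\alpha+\epsilon$ and using that the exponent $(\alpha+\epsilon)/(1+\delta)$ is $>1$ while $0\le\rho\vee\rho_{1+\delta}<1$, so that $(\rho\vee\rho_{1+\delta})^{(\alpha+\epsilon)/(1+\delta)}\le\rho\vee\rho_{1+\delta}\le\rho\vee\rho_\alpha$, yields $\gamma_n^{\alpha+\epsilon}\le K_{1+\delta}^{(\alpha+\epsilon)/(1+\delta)}(\rho\vee\rho_\alpha)^n$, hence $P(I_N(w/\gamma_n)\ge1)\le\rho_{\alpha+\epsilon}K_{1+\delta}^{(\alpha+\epsilon)/(1+\delta)}M(\rho\vee\rho_\alpha)^n x^{-\alpha}L(x)$.

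Adding the two bounds gives the claim with $K_3=\rho_{\alpha+\epsilon}M\big(K_{1+\delta}^{(\alpha+\epsilon)/(1+\delta)}+T^{\alpha+\epsilon}\big)$, which depends only on $\delta,\epsilon,T$ (through $\rho_{\alpha+\epsilon}$, the constant $K_{1+\delta}$ of Proposition~\ref{P.GeneralMoments}, and $M$) and is independent of $n$; no restriction on $n$ is needed, only $x\ge1$. There is no serious obstacle here — the argument is essentially bookkeeping — but the two points to handle with care are (i) verifying the hypotheses of Proposition~\ref{P.GeneralMoments}(b) for $\beta=1+\delta$, i.e. $\rho\vee\rho_{1+\delta}<1$ and $E[Z_N^{1+\delta}]<\infty$, both of which follow from $1+\delta<\alpha$ together with convexity of $\rho_\theta$ and the tail bound on $Z_N$; and (ii) checking that the surplus exponent of $x$ is exactly $\epsilon/2$ in each of the two terms, which is precisely what makes the absorption into $L$ uniform over all $x\ge1$.
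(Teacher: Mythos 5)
Your argument is correct and is essentially the paper's own proof: both terms are handled by the union bound plus Markov's inequality at the exponent $\alpha+\epsilon$, the factor $w^{-(\alpha+\epsilon)}=x^{-\alpha-\epsilon/2}$ comes from the choice of $\nu$, the geometric factor comes from the moment bound $\gamma_n\leq K(\rho\vee\rho_{1+\delta})^{n/(1+\delta)}$ of Proposition \ref{P.GeneralMoments} together with convexity of $\theta\mapsto\rho_\theta$, and the surplus $x^{-\epsilon/2}$ is absorbed into $\sup_{t\geq1}(t^{\epsilon/2}L(t))^{-1}$. The only cosmetic difference is that you apply this absorption to each piece separately while the paper does it once at the end.
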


\begin{proof}
First note that from the union bound we obtain
\begin{align}
P(I_N(w/\gamma_n) \geq 1) &= E \left[ 1\left( \bigcup_{i=1}^N \{ C_i > w/\gamma_n \} \right) \right] \leq E\left[ \sum_{i=1}^N 1(C_i > w/\gamma_n) \right] \notag \\
&= \sum_{i=1}^\infty P(C_i > w/\gamma_n, N \geq i) = \sum_{i=1}^\infty E[ 1(N \geq i) E[ 1(C_i > w/\gamma_n) |N] ] \notag \\
&\leq \frac{\gamma_n^{\alpha+\epsilon}}{w^{\alpha+\epsilon}} \sum_{i=1}^\infty E\left[ C_i^{\alpha+\epsilon} 1(N \geq i) \right] = E\left[ \sum_{i=1}^N C_i^{\alpha+\epsilon} \right] \frac{\gamma_n^{\alpha+\epsilon}}{x^{\alpha + \epsilon/2}}. \label{eq:WeightsBound1}
\end{align}
Recall from the proof of Lemma \ref{L.TruncSum} that $\gamma_n \leq K_{1,2} (\rho \vee \rho_{1+\delta})^{n/(1+\delta)}$ for some constant $K_{1,2} = K_{1,2}(\delta)$. It follows that \eqref{eq:WeightsBound1} is bounded by
$$\rho_{\alpha+\epsilon} \left( K_{1,2} ( \rho \vee \rho_{1+\delta})^{n/(1+\delta)} \right)^{\alpha+\epsilon} \frac{1}{x^{\alpha + \epsilon/2}} \leq \rho_{\alpha+\epsilon} K_{1,2}^{\alpha+\epsilon} (\rho \vee \rho_{1+\delta})^n \frac{1}{x^{\alpha+\epsilon/2}},$$
for any $x > 0$. Now, to bound the second probability in the statement, note that the same arguments used above give
\begin{align*}
P\left( I_N(w/\gamma_n) = 0, \, I_N(x/T) \geq 1   \right) &\leq P\left( I_N(x/T) \geq 1\right)  \leq \rho_{\alpha+\epsilon} T^{\alpha+\epsilon} \frac{1}{x^{\alpha+\epsilon}} .
\end{align*}
The convexity of $f(\theta) = \rho_\theta$ gives $\rho \vee \rho_{1+\delta} \leq \rho \vee \rho_\alpha$, from where it follows that there exists a constant $K_{3,1} = K_{3,1}(\delta, \epsilon, T)$ such that
\begin{align*}
P\left( I_N(w/\gamma_n) \geq 1 \right) + P\left( I_N(w/\gamma_n) = 0, \, I_N(x/T) \geq 1 \right) &\leq K_{3,1} \left( (\rho \vee \rho_\alpha)^n   + \frac{1}{x^{\epsilon/2}} \right) x^{-\alpha-\epsilon/2} \\
&\leq K_{3,1} \left( (\rho \vee \rho_\alpha)^n   + \frac{1}{x^{\epsilon/2}} \right) \sup_{t \geq 1} \frac{1}{t^{\epsilon/2}L(t)} \cdot x^{-\alpha} L(x)
\end{align*}
for all $x \geq 1$. 
\end{proof}

\bigskip

\begin{lem} \label{L.Potter}
Under the assumptions of Lemma \ref{L.Bound1Iter}, there exists a finite constant $K_4 = K_4(\delta)$ such that for all $x \geq 1$,
$$P\left( Z_N > x/a_n \right)  \leq K_{4} (\rho \vee \rho_\alpha)^n   x^{-\alpha} L(x).$$
\end{lem}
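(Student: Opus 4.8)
The plan is to reduce directly to the standing hypothesis $P(Z_N > t) \le t^{-\alpha} L(t)$ ($t \ge 1$), applied at the point $t = x/a_n$, and then to control the slowly varying factor $L(x/a_n)$ against $L(x)$ by a Potter bound whose exponent is tuned so that the loss it introduces is exactly absorbed by the power $a_n^{\alpha}$ while the full geometric rate is retained.

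First I would set up notation: writing $c_0 = \delta^{-2} E[Q^+] \in (0,\infty)$ and $b = \rho \vee \rho_{1+\delta}$, one has $a_n = c_0\, b^{n/(1+\delta)}$; since $\delta < (\alpha-1)/2$ gives $1 < 1+\delta < \alpha$, the convexity of $\theta \mapsto \rho_\theta$ on $[1,\alpha]$ yields $\rho_{1+\delta} \le \rho \vee \rho_\alpha$, and hence $b \le \rho \vee \rho_\alpha < 1$. I would then fix the Potter exponent $\xi := \alpha - 1 - \delta$, which is positive because $\delta < (\alpha-1)/2 < \alpha - 1$; since $1/L$ is locally bounded on $[1,\infty)$, Potter's theorem in its global form (available precisely because $1/L$ is locally bounded; cf.\ Theorem 1.5.6 in \cite{BiGoTe1987}) supplies a finite $C_\xi$ with $L(s)/L(t) \le C_\xi \max\{(s/t)^{\xi}, (t/s)^{\xi}\}$ for all $s, t \ge 1$.

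The main case is $a_n \le 1$, where for every $x \ge 1$ one has $x/a_n \ge x \ge 1$, so the hypothesis together with the Potter bound (taken with $s = x/a_n \ge t = x$) gives
$$P(Z_N > x/a_n) \le (x/a_n)^{-\alpha} L(x/a_n) = a_n^{\alpha} x^{-\alpha} L(x/a_n) \le C_\xi\, a_n^{\alpha-\xi}\, x^{-\alpha} L(x).$$
Because $\alpha - \xi = 1 + \delta$, we get $a_n^{\alpha-\xi} = c_0^{1+\delta}\, b^{n} \le c_0^{1+\delta} (\rho \vee \rho_\alpha)^{n}$, which is precisely the asserted bound with $K_4 = C_\xi c_0^{1+\delta}$. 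The remaining possibility $a_n > 1$ can occur only when $c_0 > 1$, and since $a_n \downarrow 0$ it confines $n$ to a finite set $\{0,\dots,n_1\}$ with $n_1 = n_1(\delta)$, over which $1 < a_n \le c_0$; these finitely many values of $n$ I would absorb into the constant by inserting the factor $(\rho\vee\rho_\alpha)^{-n_1}(\rho\vee\rho_\alpha)^{n} \ge 1$. For such $n$: if $x/a_n \ge 1$, the same computation (now the Potter bound is used with $t/s = a_n \in (1,c_0]$) gives $P(Z_N > x/a_n) \le C_\xi c_0^{\alpha+\xi} x^{-\alpha} L(x)$; if $x/a_n < 1$, then $1 \le x < c_0$, so $x^{-\alpha} L(x) \ge c_0^{-\alpha} \inf_{1 \le u \le c_0} L(u) > 0$ (the infimum is positive since $1/L$ is locally bounded) while $P(Z_N > x/a_n) \le 1$. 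Taking $K_4$ to be the largest of the three constants obtained — all functions of $\delta$ and the fixed parameters $\alpha, \epsilon$ — completes the argument.

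The one point that genuinely needs care is the choice of $\xi$ in the main case: one cannot expect $L(x/a_n) \le \mathrm{const}\cdot L(x)$ uniformly in $n$ (e.g.\ $L(t) = \log t$ fails), so the Potter loss $a_n^{-\xi}$ must be swallowed by $a_n^{\alpha}$ while still leaving $b^{\,n(\alpha-\xi)/(1+\delta)} \le (\rho\vee\rho_\alpha)^{n}$; this forces $\xi \le \alpha - 1 - \delta$, and is exactly where the standing assumption $\delta < (\alpha-1)/2$ (in particular $\delta < \alpha-1$) enters. The rest is routine bookkeeping with slowly varying functions of the kind already used repeatedly in this section.
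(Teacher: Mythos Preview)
Your argument is correct and follows essentially the same route as the paper's: both feed the hypothesis $P(Z_N>t)\le t^{-\alpha}L(t)$ into a Potter bound for $L(x/a_n)/L(x)$ and choose the exponent so that the surviving power of $a_n$ produces $(\rho\vee\rho_{1+\delta})^{n}$, which is then dominated by $(\rho\vee\rho_\alpha)^{n}$ via convexity and the constraint $\delta<(\alpha-1)/2$; your choice $\xi=\alpha-1-\delta$ gives exactly $a_n^{1+\delta}=c_0^{1+\delta}b^{n}$, whereas the paper takes Potter exponent $\delta$ and gets $a_n^{\alpha-\delta}$, then uses $(\alpha-\delta)/(1+\delta)\ge 1$ for the same conclusion. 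One small caveat on bookkeeping: the global Potter bound you invoke on $[1,\infty)$ requires $L$ \emph{and} $1/L$ locally bounded there (not $1/L$ alone), which the paper does not state explicitly --- it sidesteps this by applying Potter only for $x\ge x_0$ and disposing of small $x$ via Markov's inequality on $Z_N^{\alpha-\delta}$, whereas you handle the residual cases by your split on $a_n\gtrless 1$.
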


\begin{proof}
We use Potter's Theorem (see Theorem 1.5.6 (iii) in \cite{BiGoTe1987}) to obtain that there exists a constant $x_0 = x_0(2,\delta) > 0$ such that for all $\min\{x, x/a_0\} \geq x_0$
\begin{align}
P(Z_N > x/a_n) &\leq \frac{(x/a_n)^{-\alpha} L(x/a_n)}{x^{-\alpha} L(x)} \cdot x^{-\alpha} L(x) \notag \\
&\leq 2 \max\left\{ \left( \frac{x/a_n}{x} \right)^{-\alpha+\delta}, \, \left( \frac{x/a_n}{x} \right)^{-\alpha-\delta} \right\} x^{-\alpha} L(x) \notag \\
&= 2 \max\left\{ \left( \frac{E[Q^+] (\rho \vee \rho_{1+\delta})^{n/(1+\delta)} }{\delta^2} \right)^{\alpha-\delta}, \, \left( \frac{E[Q^+] (\rho \vee \rho_{1+\delta})^{n/(1+\delta)} }{\delta^2} \right)^{\alpha+\delta}  \right\} x^{-\alpha} L(x) \notag \\
&\leq \frac{2(E[Q^+] \vee 1)^{\alpha+\delta}}{\delta^{2(\alpha+\delta)}} (\rho \vee \rho_{1+\delta})^{(\alpha-\delta) n/(1+\delta)} x^{-\alpha} L(x). \notag
\end{align}
The convexity of $f(\theta) = \rho_\theta$ and our choice of $\delta$ gives $(\rho \vee \rho_{1+\delta})^{(\alpha-\delta)n/(1+\delta)} \leq (\rho \vee \rho_\alpha)^n$, from where it follows that
\begin{equation*}
P(Z_N > x/a_n) \leq \frac{2(E[Q^+] \vee 1)^{\alpha+\delta}}{\delta^{2(\alpha+\delta)}} (\rho \vee \rho_\alpha)^{ n} x^{-\alpha} L(x) \triangleq K_{4,1}  (\rho \vee \rho_\alpha)^{ n} x^{-\alpha} L(x) . 
\end{equation*}
for all $x \geq \max\{ x_0, a_0 x_0\}$. For the values $1 \leq x \leq \max\{x_0, a_0 x_0\}$ use Markov's inequality to obtain
$$P(Z_N > x/a_n) \leq \frac{a_n^{\alpha-\delta}}{x^{\alpha-\delta}} \leq K_{4,1} (\rho \vee \rho_\alpha)^n x^{-\alpha+\delta} \leq K_{4,1} \sup_{1 \leq t \leq \max\{x_0, a_0 x_0\}} \frac{t^{\delta}}{L(t)} \cdot x^{-\alpha} L(x).$$
Setting $K_4 = K_{4,1} \max\left\{1, \, \sup_{1 \leq t \leq \max\{x_0, a_0 x_0\}} t^\delta L(t)^{-1} \right\}$ gives the statement of the lemma. 
\end{proof}

\bigskip

We are now ready to give the proof of Lemma \ref{L.Bound1Iter}. 

\begin{proof}[Proof of Lemma \ref{L.Bound1Iter}] 
First recall that $W_{n+1} \stackrel{\mathcal{D}}{=} \sum_{i=1}^N C_i W_{n,i}$, where the $W_{n,i}$ are i.i.d. having the same distribution as $W_n$ and are independent of the vector $(N, C_1, \dots, C_N)$. The idea of the proof is to split  $\{ \sum_{i=1}^N C_i W_{n, i}^+ > x\}$ into several different events, and bound each of them separately. We proceed as follows,
\begin{align}
P\left( W_{n+1}^+ > x \right) &\leq P\left( \sum_{i=1}^N C_i W_{n,i}^+ > x \right) \notag \\
&\leq P\left( \sum_{i=1}^N C_i W_{n,i}^+ > x, \, Z_N \leq x/a_n \right) + P\left( Z_N > x/a_n \right) \notag \\
&\leq P\left( \sum_{i=1}^N C_i W_{n,i}^+ > x, \, Z_N \leq x/a_n, \, I_N(w/\gamma_n) = 0 \right) \notag \\
&\hspace{5mm} + P\left( I_N(w/\gamma_n) \geq 1 \right)  + P\left( Z_N > x/a_n \right) \notag \\
&\leq P\left(\sum_{i=1}^N C_i W_{n,i}^+ > x, \, J_N(y) = 0, \, Z_N \leq x/a_n, \, I_N(w/\gamma_n) = 0 \right) \label{eq:FirstIneq} \\
&\hspace{5mm} + P\left(\sum_{i=1}^N C_i W_{n,i}^+ > x, \, J_N(y) = 1, \, Z_N \leq x/a_n, \, I_N(w/\gamma_n) = 0 \right) \label{eq:SecondIneq} \\
&\hspace{5mm} + P\left( J_N(y) \geq 2, \, Z_N \leq x/a_n, \, I_N(w/\gamma_n) = 0 \right) \notag  \\
&\hspace{5mm}  + P\left( I_N(w/\gamma_n) \geq 1 \right)  + P\left( Z_N > x/a_n \right) . \notag  
\end{align}

Note that the probability in \eqref{eq:SecondIneq} is bounded by
\begin{align*}
&P\left(\sum_{i=1}^N C_i W_{n,i}^+ > x, \, J_N(y) = 1, \, J_N((1-\delta)x) = 0, \, Z_N \leq x/a_n, \, I_N(w/\gamma_n) = 0 \right)  \\
&\hspace{5mm} + P\left(J_N((1-\delta)x) \geq 1,  \, I_N(w/\gamma_n) = 0 \right)  \\
&\leq P\left(\sum_{i=1}^N C_i W_{n,i}^+ 1(C_i W_{n,i}^+ \leq y)  > \delta x, \, Z_N \leq x/a_n, \, I_N(w/\gamma_n) = 0 \right)  \\
&\hspace{5mm} + P\left(J_N((1-\delta)x) \geq 1, \, I_N(x/T) = 0 \right) + P\left( I_N(w/\gamma_n) = 0, \, I_N(x/T) \geq 1 \right),
\end{align*}
while \eqref{eq:FirstIneq} is bounded by
\begin{align*}
P\left(\sum_{i=1}^N C_i W_{n,i}^+ 1(C_i W_{n,i}^+ \leq y) > x,  \, Z_N \leq x/a_n, \, I_N(w/\gamma_n) = 0 \right).
\end{align*}
It follows that
\begin{align}
P(W_{n+1}^+ > x) &\leq 2 P\left(\sum_{i=1}^N C_i W_{n,i}^+ 1(C_i W_{n,i}^+ \leq y)  > \delta x, \, Z_N \leq x/a_n, \, I_N(w/\gamma_n) = 0 \right) \label{eq:TruncSum} \\
&\hspace{5mm} + P\left( J_N(y) \geq 2, \, Z_N \leq x/a_n, \, I_N(w/\gamma_n) = 0 \right) \label{eq:SecondMax}  \\
&\hspace{5mm} + P\left(I_N(w/\gamma_n) \geq 1 \right) + P\left( I_N(w/\gamma_n) = 0, \, I_N(x/T) \geq 1 \right) \label{eq:Weights} \\
&\hspace{5mm} +  P\left( Z_N > x/a_n \right) \label{eq:Potter} \\
&\hspace{5mm} +  P\left(J_N((1-\delta)x) \geq 1, \, I_N(x/T) = 0 \right)  . \notag
\end{align}
By Lemmas \ref{L.TruncSum}, \ref{L.SecondMax}, \ref{L.Weights} and \ref{L.Potter}, \eqref{eq:TruncSum} + \eqref{eq:SecondMax} + \eqref{eq:Weights} + \eqref{eq:Potter} is bounded by
\begin{align*}
(2 K_1 + K_2 + K_3 + K_4) (\rho \vee \rho_\alpha)^n x^{-\alpha} L(x) + K_3 \, x^{-\alpha-\epsilon/2} L(x)
\end{align*}
for all $x \geq 1$ and all $0 \leq n \leq  \frac{\epsilon}{2|\log(\rho \vee \rho_\alpha)|} \log x$, where $K_1, K_2, K_3, K_4$ are finite constants that only depend on $\epsilon$, $\delta$ and $T$. Moreover, for this range of values of $n$ we have 
$$x^{-\epsilon/2} = (\rho \vee \rho_\alpha)^{ \frac{\epsilon}{2|\log(\rho \vee \rho_\alpha)|}  \log x} \leq (\rho \vee \rho_\alpha)^n.$$
Define $K_0 = K_0(\delta, \epsilon) = 2K_1 + K_2 + 2 K_3 + K_4$ to obtain that
\begin{align}
P(W_{n+1}^+ > x) &\leq K_0 (\rho \vee \rho_\alpha)^n x^{-\alpha} L(x) \notag \\
&\hspace{5mm} +  P\left(J_N((1-\delta)x) \geq 1, \, I_N(x/T) = 0 \right). \label{eq:OneSummand} 
\end{align}

To bound \eqref{eq:OneSummand} use the union bound to obtain 
\begin{align}
P\left(J_N((1-\delta)x) \geq 1, \, I_N(x/T) = 0 \right) &= E\left[ 1(I_N(x/T) = 0) \cdot 1\left( \bigcup_{i=1}^N \{ C_i W_{n,i}^+ > (1-\delta)x \} \right) \right]  \notag \\
&\leq E\left[ 1(I_N(x/T) = 0) \sum_{i=1}^N 1\left( C_i W_{n,i}^+ > (1-\delta)x  \right)  \right] ,   \label{eq:Bound2}
\end{align}
which completes the proof. 
\end{proof}

\bigskip

We can now give the proof of Proposition \ref{P.UniformBound}, the main technical contribution of the paper. 

\begin{proof}[Proof of Proposition \ref{P.UniformBound}]
Recall that $\overline{G}(x) = P(Z_N > x)$. Note that it is enough to prove the proposition for all $x \geq x_1$ for some $x_1 = x_1(\eta,\epsilon) \geq 1$, since for all $1 \leq x \leq x_1$ and $n \geq 1$, 
\begin{align*}
P(W_n^+ > x) &= \frac{P(W_n^+ > x)}{ \eta^n \overline{G}(x)} \, \eta^n \overline{G}(x)\\
&\leq \frac{E[Q^+] \rho^n x^{-1}}{\eta^n \overline{G}(x)} \, \eta^n \overline{G}(x) \qquad \text{(by Markov's inequality)} \\
&\leq \sup_{1 \leq t \leq x_1} \frac{E[Q^+]}{t \overline{G}(t)} \, \cdot \eta^n \overline{G}(x) .
\end{align*}

Next, choose $0 < \delta < \min\{ (\alpha-1)/2, \epsilon, 1/2\}$ such that
\begin{equation} \label{eq:EpsilonChoice}
\rho_\alpha \left( \delta + (1-\delta)^{-\alpha-1} \right) \leq \eta.
\end{equation}

Now note that by Potter's Theorem (see Theorem 1.5.6 (iii) in \cite{BiGoTe1987}), there exists a constant $x_0 = x_0(2,\delta) > 0$ such that
\begin{align*}
E\left[  \sum_{i=1}^N  \frac{\overline{G}((1-\delta) x/ C_i)}{\overline{G}(x)}   \right] &\leq E\left[ \sum_{i=1}^N 2 (1-\delta)^{-\alpha} C_i^\alpha \max\{ ((1-\delta)/C_i)^{-\delta}, ((1-\delta)/C_i)^\delta \} \right] \\ &\leq 2(1-\delta)^{-\alpha-\delta} (\rho_{\alpha-\delta} + \rho_{\alpha+\delta}) < \infty
\end{align*}
for all $x \geq x_0$. And for $1 \leq x \leq x_0$ Markov's inequality gives
$$E\left[  \sum_{i=1}^N  \frac{\overline{G}((1-\delta) x/ C_i)}{\overline{G}(x)}   \right] \leq \frac{1}{\overline{G}(x)} E\left[ \sum_{i=1}^N \frac{E[Z_N^{\alpha-\delta}] C_i^{\alpha-\delta}}{(1-\delta)^{\alpha-\delta} x^{\alpha-\delta}} \right] \leq  \frac{ E[Z_N^{\alpha-\delta}] \rho_{\alpha-\delta} }{(1-\delta)^{\alpha-\delta}} \sup_{1 \leq t \leq x_0} \frac{t^{-\alpha+\delta}}{\overline{G}(t)} < \infty.$$
Hence, by dominated convergence, 
$$\lim_{x \to \infty} E\left[  \sum_{i=1}^N  \frac{\overline{G}((1-\delta) x/ C_i)}{\overline{G}(x)}   \right]  = E\left[  \sum_{i=1}^N  \lim_{x \to \infty} \frac{\overline{G}((1-\delta) x/ C_i)}{\overline{G}(x)}   \right] = (1-\delta)^{-\alpha} \rho_\alpha.$$
It follows that there exists $x_1 = x_1(\delta) \geq 1$ for which 
\begin{equation} \label{eq:Limit}
E\left[  \sum_{i=1}^N  \frac{\overline{G}((1-\delta) x/ C_i)}{\overline{G}(x)}   \right] \leq (1-\delta)^{-\alpha-1} \rho_\alpha
\end{equation}
for all $x \geq x_1$. Set $T = 2 x_1$.  

Now, by Lemma \ref{L.Bound1Iter}, there exists a finite constant $K_0 > 0$ (that does not depend on $n$) such that
$$P(W_{n+1}^+ > x) \leq K_0 (\rho \vee \rho_\alpha)^n \overline{G}(x) + E\left[ 1(I_N(x/T) = 0) \sum_{i=1}^N 1\left( C_i W_{n,i}^+ > (1-\delta)x  \right)  \right]$$
for all $x \geq 1$ and $0 \leq n  \leq \frac{\epsilon}{2|\log (\rho \vee \rho_\alpha)|} \log x$.  Let $K_1 =  (\delta \rho_\alpha)^{-1} K_0$ to obtain 
\begin{equation} \label{eq:oneIter}
P(W_{n+1}^+ > x) \leq K_1 \delta \rho_\alpha \eta^{n} \overline{G}(x) + E\left[ 1(I_N(x/T) = 0) \sum_{i=1}^N 1\left( C_i W_{n,i}^+ > (1-\delta)x  \right)  \right]
\end{equation}
for all $x \geq 1$ and $0 \leq n  \leq \frac{\epsilon}{2|\log (\rho \vee \rho_\alpha)|} \log x$. 

Now we go on to derive bounds for $P(W_n^+ > x)$ for different ranges of $n$. For the values $1 \leq n \leq  \frac{\epsilon}{2|\log(\rho \vee \rho_\alpha)|} \log x$ we proceed by induction. Let $\mathcal{F} = \sigma(N, C_1, \dots, C_N)$. Define
$$K_2 = \max\left\{ K_1,   \, K_1 \delta  + E[ (Q^+)^{\alpha+\epsilon}] \frac{\rho_{\alpha+\epsilon}}{\eta} \sup_{t \geq 1} \frac{1}{t^\epsilon L(t)}  \right\}.$$

For $n = 1$, we have by \eqref{eq:oneIter},
\begin{align*}
P(W_1^+ > x) &\leq K_1 \delta \rho_\alpha \overline{G}(x) + E\left[ 1(I_N(x/T) = 0) \sum_{i=1}^N 1\left( C_i W_{0,i}^+ > (1-\delta)x  \right)  \right],
\end{align*}
where $W_{0,i}^+ = Q_i^+$ and $\{Q_i^+\}$ are independent of $(N, C_1, \dots, C_N)$. By conditioning on $\mathcal{F}$ we get
\begin{align*}
E\left[ 1(I_N(x/T) = 0) \sum_{i=1}^N 1\left( C_i W_{0,i}^+ > (1-\delta)x  \right)  \right] &= E\left[ 1(I_N(x/T) = 0) \sum_{i=1}^N E[ 1\left( C_i Q_{i}^+ > (1-\delta)x  \right) | \mathcal{F}] \right] \\
&\leq E\left[ \sum_{i=1}^N \frac{E[ (C_i Q_i^+)^{\alpha+\epsilon} | C_i]}{x^{\alpha+\epsilon}} \right] \qquad \text{(by Markov's inequality)} \\
&= E[ (Q^+)^{\alpha+\epsilon}] \rho_{\alpha+\epsilon} x^{-\alpha-\epsilon} \\
&\leq E[ (Q^+)^{\alpha+\epsilon}] \rho_{\alpha+\epsilon} \sup_{t \geq 1} \frac{1}{t^\epsilon L(t)} \overline{G}(x).
\end{align*}
It follows that 
$$P(W_1^+ > x) \leq \left( K_1 \delta  + E[ (Q^+)^{\alpha+\epsilon}] \frac{\rho_{\alpha+\epsilon}}{\eta} \sup_{t \geq 1} \frac{1}{t^\epsilon L(t)} \right) \eta \overline{G}(x) \leq K_2 \eta \overline{G}(x)$$
for all $x \geq 1$. Suppose now that
\begin{equation} \label{eq:InductionHyp}
\overline{G}_n(x) \triangleq P(W_n^+ > x) \leq K_2 \eta^n \overline{G}(x) 
\end{equation}
for all $x \geq x_1$. 

Let $2 \leq n \leq  \frac{\epsilon}{2|\log(\rho \vee \rho_\alpha)|} \log x$. Then, by the induction hypothesis \eqref{eq:InductionHyp}, we have for all $x \geq x_1$,
\begin{align*}
E\left[ 1(I_N(x/T) = 0) \sum_{i=1}^N 1\left( C_i W_{n,i}^+ > (1-\delta)x  \right)  \right]  &= E\left[  1(I_N(x/T) = 0) \sum_{i=1}^N  E\left[ \left. 1\left( C_i W_{n,i}^+ > (1-\delta)x  \right) \right| \mathcal{F} \right]  \right] \\
&= E\left[  1(I_N(x/T) = 0) \sum_{i=1}^N  \overline{G}_n((1-\delta) x/ C_i)  \right] \\
&\leq K_2 \eta^n E\left[  1(I_N(x/T) = 0) \sum_{i=1}^N  \overline{G}((1-\delta) x/ C_i)  \right] \\
&= K_2 \eta^n \overline{G}(x) E\left[ 1(I_N(x/T) = 0) \sum_{i=1}^N  \frac{\overline{G}((1-\delta) x/ C_i)}{\overline{G}(x)}   \right]  \\
&\leq K_2 \eta^n (1-\delta)^{-\alpha-1} \rho_\alpha \overline{G}(x) ,
\end{align*}
where in the last inequality we used \eqref{eq:Limit}. Then, by \eqref{eq:oneIter}
\begin{align*}
P(W_{n+1}^+ > x) &\leq K_1 \delta \rho_\alpha \eta^n \overline{G}(x) + K_2 \eta^n (1-\delta)^{-\alpha-1} \rho_\alpha \overline{G}(x) \\
&\leq K_2 \left( \delta + (1-\delta)^{-\alpha-1} \right) \rho_\alpha \eta^n \overline{G}(x) \\
&\leq K_2 \eta^{n+1} \overline{G}(x),
\end{align*}
for all $x \geq x_1$. 

Finally, for $n \geq  \frac{\epsilon}{2|\log(\rho \vee \rho_\alpha)|} \log x$, we use the moment estimates for $W_n$. Define
$$\varepsilon =  \frac{\eta}{\rho \vee \rho_\alpha}  - 1 > 0 \qquad \text{and} \qquad \kappa = \frac{\epsilon  \log(1+\varepsilon)}{2|\log(\rho \vee \rho_\alpha)|}.$$
Choose $0 < s < \min\{\kappa/2, \alpha-1\}$. Then, by Markov's inequality and Proposition \ref{P.GeneralMoments}, we have
\begin{align}
P(W_n^+ > x) &\leq E[(W_n^+)^{\alpha-s}] x^{-\alpha+s} \notag \\
&\leq K_{\alpha-s} (\rho \vee \rho_{\alpha-s})^n x^{-\alpha+s} \notag \\
&\leq K_{\alpha-s} (1+\varepsilon)^{- n} \eta^n x^{-\alpha+s} \notag \\
&\leq K_{\alpha-s} x^{- \log(1+\varepsilon) \frac{\epsilon}{2|\log(\rho \vee \rho_\alpha)|}}  \eta^n x^{-\alpha+s} \notag  \\
&= K_{\alpha-s}   \eta^n x^{-\alpha-\kappa+s} \label{eq:boundForW_n}
\end{align}
for all $x > 0$. Our choice of $s$ now gives
\begin{align*}
P(W_n^+ > x) &\leq K_{\alpha-s}   \eta^n x^{-\alpha-\kappa/2} \leq K_{\alpha-s} \sup_{t \geq 1} \frac{t^{-\kappa/2}}{L(t)} \cdot \eta^n \overline{G}(x) \triangleq K_3 \eta^n \overline{G}(x)
\end{align*}
for all $x \geq 1$. 

We have thus shown that
$$P(W_n^+ > x) \leq \max\{ K_2, K_3\} \eta^n \overline{G}(x)$$
for all $x \geq x_1$ and $n \geq 1$. 
\end{proof}

\bigskip

We end this section with a sketch of the proof of Proposition \ref{P.UniformBoundQ}. As mentioned before, the proofs of the other results presented in Section \ref{S.QDominates} have been omitted since they are very similar to those from Section \ref{S.NDominates}.

\begin{proof}[Sketch of the proof of Proposition \ref{P.UniformBoundQ}]
Follow the proof of Proposition \ref{P.UniformBound} up to inequality \eqref{eq:Limit} substituting $\overline{G}(x) = P(Z_N > x)$ with $\overline{F}(x) \triangleq P(Q > x) = x^{-\alpha} L(x)$. Now note that by Markov's inequality 
$$P(Z_N > x) \leq E[Z_N^{\alpha+\epsilon}] x^{-\alpha-\epsilon}$$
for all $x > 0$, so we can use Lemma \ref{L.Bound1Iter} to obtain 
\begin{equation*} 
P(W_{n+1}^+ > x) \leq K_0 (\rho \vee \rho_\alpha)^n E[Z_N^{\alpha+\epsilon}] x^{-\alpha-\epsilon} + E\left[ 1(I_N(x/T) =0) \sum_{i=1}^N 1(C_i W_{n,i}^+ > (1-\delta)x ) \right]
\end{equation*}
for all $x \geq 1$ and all $0 \leq n \leq \frac{\epsilon}{2|\log(\rho \vee \rho_\alpha)|} \log x$; $K_0 >0$ is a constant that does not depend on $n$. Let $K_1 = (\delta \rho_\alpha)^{-1} K_0 E[Z_N^{\alpha+\epsilon}] \sup_{t \geq 1} t^{-\epsilon}/L(t)$ to derive
\begin{equation} \label{eq:Q_1Iter}
P(W_{n+1}^+ > x) \leq K_1 \delta \rho_\alpha \eta^n \overline{F}(x) + E\left[ 1(I_N(x/T) =0) \sum_{i=1}^N 1(C_i W_{n,i}^+ > (1-\delta)x ) \right]
\end{equation}
for all $x \geq 1$ and all $0 \leq n \leq \frac{\epsilon}{2|\log(\rho \vee \rho_\alpha)|} \log x$.

Now define $\mathcal{F} = \sigma(N, C_1, \dots, C_N)$ and $K_2 = \max\{ K_1, 1\}$.  For the values $0 \leq n \leq \frac{\epsilon}{2|\log(\rho \vee \rho_\alpha)|} \log x$ we proceed by induction. For $n=1$ we have $W_{0,i}^+ = Q_i^+$, with the $\{Q_i^+\}$ independent of $(N, C_1, \dots, C_N)$. By conditioning on $\mathcal{F}$ and using \eqref{eq:Limit} (with $\overline{G}(x)$ substituted by $\overline{F}(x)$), we get
\begin{align*}
E\left[ 1(I_N(x/T) =0) \sum_{i=1}^N 1(C_i W_{0,i}^+ > (1-\delta)x ) \right] &= E\left[ 1(I_N(x/T) =0) \sum_{i=1}^N \overline{F}((1-\delta)x/C_i )  \right] \\
&\leq (1-\delta)^{-\alpha-1} \rho_\alpha \overline{F}(x). 
\end{align*}
It follows that
$$P(W_1^+ > x ) \leq K_1 \delta \rho_\alpha \overline{F}(x) + (1-\delta)^{-\alpha-1} \rho_\alpha \overline{F}(x) \leq K_2 \eta \overline{F}(x)$$
for all $x \geq 1$. 

The rest of the proof continues exactly as that of Proposition \ref{P.UniformBound} with $\overline{G}(x)$ substituted by $\overline{F}(x)$. 
\end{proof}

\appendix
\section{Some results for weighted random sums}

We include in this appendix two results related to the asymptotic behavior of randomly weighted and randomly stopped sums. The first one is a quick corollary of a theorem from \cite{Olvera_11} that allows the addition of the $Q$ term for the case where $Z_N$ has a regularly varying distribution. The second one also uses some of the results from \cite{Olvera_11}, but is more involved since it refers to the case where $Q$ has a regularly varying distribution. Both of these results may be of independent interest.

\begin{thm} \label{T.SumPlusQ_ZN}
Let $\{X_i\}$ be a sequence of i.i.d. random variables with common distribution $\overline{F} \in \mathcal{R}_{-\alpha}$,  $\alpha > 1$, $E[(X_1^-)^{1+\epsilon}]<\infty$ for some $0 < \epsilon < \alpha-1$, and $E[X_1] > 0$.  Assume further that $(Q, N, C_1, \dots, C_N)$ is a random vector, independent of the $\{X_i\}$, with $N \in \mathbb{N} \cup \{\infty\}$, $\{C_i\} \geq 0$, and $Q \in \mathbb{R}$. Then, if $Z_N = \sum_{i=1}^N C_i$ satisfies $P(Z_N > x) \sim c P(X_1 > x)$ for some $c > 0$, $E\left[ \sum_{i=1}^N C_i^{\alpha+\epsilon} \right] < \infty$ and $E[|Q|^{\alpha+\epsilon}] < \infty$, we have
$$P\left( \sum_{i=1}^N C_i X_i + Q > x \right) \sim \left( E\left[ \sum_{i=1}^N C_i^\alpha \right]  +  c (E[X_1])^{\alpha} \right) \overline{F}(x)$$
as $x \to \infty$. 
\end{thm}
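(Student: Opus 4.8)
The plan is to obtain the statement as a perturbation, by the additive term $Q$, of the asymptotic for the purely randomly weighted sum $\sum_{i=1}^N C_i X_i$, which is furnished by a theorem from \cite{Olvera_11} on randomly weighted random sums. First I would invoke that result: under the present hypotheses on $\{X_i\}$ (i.i.d., $\overline{F}\in\mathcal{R}_{-\alpha}$ with $\alpha>1$, $E[(X_1^-)^{1+\epsilon}]<\infty$, $E[X_1]>0$) and on $(Q,N,C_1,\dots,C_N)$ (independent of $\{X_i\}$, $P(Z_N>x)\sim c\,\overline{F}(x)$, $E[\sum_{i=1}^N C_i^{\alpha+\epsilon}]<\infty$), it gives
$$P\left( \sum_{i=1}^N C_i X_i > x \right) \sim \Lambda\,\overline{F}(x), \qquad \Lambda \triangleq E\left[ \sum_{i=1}^N C_i^\alpha \right] + c\,(E[X_1])^\alpha,$$
the first term coming from the event that a single summand $C_i X_i$ is large (a Breiman-type contribution, since $\sum_i P(C_i X_i>x)\sim E[\sum_i C_i^\alpha]\,\overline{F}(x)$) and the second from the event that $Z_N=\sum_i C_i$ is large, which forces $\sum_i C_i X_i\approx E[X_1]\,Z_N$. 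For brevity write $S=\sum_{i=1}^N C_i X_i$.

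Next I would reinstate $Q$ by a truncation-and-sandwich argument that uses no independence between $Q$ and $(N,C_1,\dots,C_N)$. Fix $\delta$ with $\alpha/(\alpha+\epsilon)<\delta<1$. On the event $\{|Q|\le x^\delta\}$, $S>x+x^\delta$ implies $S+Q>x$, which in turn implies $S>x-x^\delta$; hence
$$P(S>x+x^\delta)-P(|Q|>x^\delta) \;\le\; P(S+Q>x) \;\le\; P(S>x-x^\delta)+P(|Q|>x^\delta).$$
By Markov's inequality $P(|Q|>x^\delta)\le E[|Q|^{\alpha+\epsilon}]\,x^{-\delta(\alpha+\epsilon)}$; writing $\overline{F}(x)=x^{-\alpha}L(x)$ with $L$ slowly varying and noting $\delta(\alpha+\epsilon)>\alpha$, the fact that $t^{\eta}L(t)\to\infty$ for every $\eta>0$ gives $P(|Q|>x^\delta)=o(\overline{F}(x))$. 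Since $x\pm x^\delta\to\infty$ with $(x\pm x^\delta)/x\to1$, regular variation yields $\overline{F}(x\pm x^\delta)\sim\overline{F}(x)$, so the first step gives $P(S>x\pm x^\delta)\sim\Lambda\,\overline{F}(x)$. Dividing the displayed inequalities by $\overline{F}(x)$ and letting $x\to\infty$ squeezes $P(S+Q>x)/\overline{F}(x)$ to $\Lambda$, which is the assertion.

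I do not anticipate a genuine obstacle, in line with the paper's description of the result as ``a quick corollary''. Two points deserve attention. First, one must confirm that the cited theorem of \cite{Olvera_11} applies in the present generality --- in particular that it allows the $\{C_i\}$ to be non-identically distributed and only requires $(N,C_1,\dots,C_N)$ to be independent of $\{X_i\}$, with the moment condition $E[(X_1^-)^{1+\epsilon}]<\infty$ doing the work of keeping the negative parts of the summands from depressing the sum. Second, the sandwich step must be arranged so as not to presuppose independence of $Q$ from $(N,C_1,\dots,C_N)$; this is automatic because the set inclusions on $\{|Q|\le x^\delta\}$ used above hold pointwise, so no product-measure structure is invoked. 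Everything else is routine regular-variation bookkeeping.
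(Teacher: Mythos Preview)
Your proposal is correct and follows essentially the same approach as the paper: invoke the weighted-sum asymptotic for $S=\sum_{i=1}^N C_iX_i$ from \cite{Olvera_11} (the paper cites Theorem~2.5 and the remark after it), then sandwich $P(S+Q>x)$ between $P(S>x\pm\text{(truncation level)})$ up to a negligible $P(|Q|>\text{(truncation level)})$ controlled by Markov and the moment condition on $Q$. The only cosmetic difference is that the paper truncates at $x/\log x$ while you truncate at $x^\delta$ with $\alpha/(\alpha+\epsilon)<\delta<1$; both choices give the $Q$-term as $o(\overline{F}(x))$ and preserve the asymptotic via regular variation.
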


\begin{proof}
Let $S_N = \sum_{i=1}^N C_i X_i$, and note that since $\alpha-\epsilon > 1$, the inequality $\sum_{i=1}^k y_i^\beta \leq \left( \sum_{i=1}^k y_i \right)^\beta$ for $y_i \geq 0$ and any $\beta \geq 1$ gives $E\left[ \sum_{i=1}^N C_i^{\alpha-\epsilon} \right] \leq E\left[ Z_N^{\alpha-\epsilon}\right]$, which is finite by the assumption $P(Z_N > x) \sim c \overline{F}(x)$. Then, by Theorem 2.5 and the remark after it in \cite{Olvera_11} ,
\begin{align*}
P(S_N + Q > x) &\leq P(S_N + Q > x, Q \leq x/\log x) + P(Q > x/\log x) \\
&\leq P(S_N > x-x/\log x) + \frac{E[|Q|^{\alpha+\epsilon}]}{(x/\log x)^{\alpha+\epsilon}} \qquad \text{(by Markov's inequality)} \\
&\sim E\left[ \sum_{i=1}^N C_i^{\alpha} \right] \overline{F}(x-x/\log x) + P(Z_N > (x-x/\log x)/E[X_1]) + o\left( \overline{F}(x) \right) \\
&\sim  E\left[ \sum_{i=1}^N C_i^{\alpha} \right] \overline{F}(x) + c (E[X_1])^\alpha \overline{F}(x).
\end{align*}
For the lower bound, the same arguments give
\begin{align*}
P(S_N + Q > x) &\geq P(S_N + Q > x, \, Q \geq -x/\log x) \\
&\geq P(S_N > x + x/\log x) - P( Q < -x/\log x ) \\
&\geq P(S_N > x + x/\log x) - \frac{E[|Q|^{\alpha+\epsilon}]}{(x/\log x)^{\alpha+\epsilon}} \\
&\sim E\left[ \sum_{i=1}^N C_i^{\alpha} \right] \overline{F}(x) + c (E[X_1])^\alpha \overline{F}(x).
\end{align*} 
\end{proof}

\bigskip

\begin{thm} \label{T.SumPlusQ_Q}
Let $\{X_i\}$ be a sequence of i.i.d. random variables with common distribution $\overline{F} \in \mathcal{R}_{-\alpha}$,  $\alpha > 1$, $E[(X_1^-)^{1+\epsilon}]<\infty$ for some $\epsilon > 0$.  Assume further that $(Q, N, C_1, \dots, C_N)$ is a random vector, independent of the $\{X_i\}$, with $N \in \mathbb{N} \cup \{\infty\}$, $\{C_i\} \geq 0$, and $Q \in \mathbb{R}$. Then, if $P(Q > x) \sim c P(X_1 > x)$ for some $c > 0$, and $Z_N = \sum_{i=1}^N C_i$ satisfies $E\left[ Z_N^{\alpha+\epsilon} \right] < \infty$, we have
$$P\left( \sum_{i=1}^N C_i X_i + Q > x \right) \sim \left( E\left[ \sum_{i=1}^N C_i^\alpha  \right] + c \right)\overline{F}(x)$$
as $x \to \infty$. 
\end{thm}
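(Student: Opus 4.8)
The plan is to condition on $\mathcal{G} = \sigma(Q, N, C_1, \dots, C_N)$: given $\mathcal{G}$, the sum $S_N := \sum_{i=1}^N C_i X_i$ is a deterministic weighted sum of the i.i.d.\ regularly varying $\{X_i\}$ while $Q$ is frozen. First I would collect the facts about $S_N$ that are needed. Since $E[Z_N^{\alpha+\epsilon}] < \infty$ forces $P(Z_N > t) = o(\overline{F}(t))$ and $E[\sum_{i=1}^N C_i^{\alpha\pm\epsilon'}] \le E[Z_N^{\alpha\pm\epsilon'}] < \infty$ for small $\epsilon' > 0$, Theorem~2.5 in \cite{Olvera_11} and the remark after it give $P(S_N > t) \sim E[\sum_{i=1}^N C_i^\alpha]\,\overline{F}(t)$; I will also use the companion uniform bound $P(S_N > t \mid \mathcal{G}) \le K\, g(\mathcal{G})\,\overline{F}(t)$ valid for all $t \ge 1$ with $g(\mathcal{G})$ a $\mathcal{G}$-measurable integrable function (one may take $g(\mathcal{G}) = 1 + \sum_{i=1}^N C_i^{\alpha-\epsilon'} + \sum_{i=1}^N C_i^{\alpha+\epsilon'}$), together with an elementary left-tail bound $P(S_N \le -t \mid \mathcal{G}) \le K\, h(\mathcal{G})\, t^{-p}$ with $p>1$ and $E[h(\mathcal{G})^r] < \infty$ for some $r > 1$ (all of which follow from the weighted-sum estimates of \cite{Olvera_11} or directly from moment inequalities plus a union bound on the largest summand).

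For the upper bound, fix $\delta \in (0, 1/2)$ and split $\{S_N + Q > x\}$ by the size of $Q$. On $\{Q \le \delta x\}$ one has $S_N > (1-\delta)x$, so this part is at most $P(S_N > (1-\delta)x) \sim (1-\delta)^{-\alpha} E[\sum_{i=1}^N C_i^\alpha]\,\overline{F}(x)$; on $\{Q > (1-\delta)x\}$ bound crudely by $P(Q > (1-\delta)x) \sim c(1-\delta)^{-\alpha}\overline{F}(x)$. The middle range $\{\delta x < Q \le (1-\delta)x\}$ is where $E[Z_N^{\alpha+\epsilon}] < \infty$ enters: conditioning on $\mathcal{G}$ and using $P(S_N > x - Q \mid \mathcal{G}) \le K g(\mathcal{G})\overline{F}(\delta x)$ bounds it by $K\overline{F}(\delta x)\, E[g(\mathcal{G})\,1(Q > \delta x)]$, which is $o(\overline{F}(x))$ for each fixed $\delta$ because $\overline{F}(\delta x) \sim \delta^{-\alpha}\overline{F}(x)$ while $E[g(\mathcal{G})1(Q > \delta x)] \to 0$ by dominated convergence. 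Adding the three pieces and letting $\delta \downarrow 0$ gives $\limsup_{x\to\infty} P(S_N + Q > x)/\overline{F}(x) \le E[\sum_{i=1}^N C_i^\alpha] + c$.

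For the lower bound I would exhibit two asymptotically disjoint contributions. The $Q$-driven one uses $P(S_N + Q > x) \ge P(Q > (1+\delta)x,\, S_N > -\delta x) \ge P(Q > (1+\delta)x) - E[1(Q > (1+\delta)x)\,P(S_N \le -\delta x \mid \mathcal{G})]$, and the subtracted term is $o(\overline{F}(x))$ by the left-tail bound and Hölder's inequality (using $P(Q > (1+\delta)x) \in \mathcal{R}_{-\alpha}$ and $E[h(\mathcal{G})^r] < \infty$), so this is $\ge (1 - o(1))c(1+\delta)^{-\alpha}\overline{F}(x)$. The $X$-driven one restricts to $\{|Q| \le \delta x\}$ and applies a Bonferroni bound to the events $\{C_j X_j > (1+3\delta)x\}$, $1 \le j \le N$, with the thresholds arranged so that the event still forces $S_N + Q > x$; conditioning on $\mathcal{G}$ and using the independence of $\{X_i\}$ from $\mathcal{G}$ and among themselves, the first Bonferroni term is $\ge (1 - o(1))(1+3\delta)^{-\alpha} E[\sum_{i=1}^N C_i^\alpha]\,\overline{F}(x)$ by Fatou's lemma and Potter bounds, the "two simultaneous big jumps" term is $o(\overline{F}(x))$ (a routine estimate, truncating $Z_N$ via its $(\alpha+\epsilon)$-moment), and the corrections requiring $\sum_{i \ne j} C_i X_i > -\delta x$ and $|Q| \le \delta x$ cost nothing since those conditional probabilities tend to $1$. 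Adding the two contributions and sending $\delta \downarrow 0$ matches the upper bound.

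The main obstacle is that $Q$ is regularly varying with the \emph{same} index $-\alpha$ as $\overline{F}$, so the cheap truncation $\{Q \le x/\log x\}$ that makes the proof of Theorem~\ref{T.SumPlusQ_ZN} a one-liner is unavailable here (indeed $P(Q > x/\log x) \sim c(\log x)^\alpha\overline{F}(x) \gg \overline{F}(x)$), and one must follow $x - Q$ across all scales of $Q$. The delicate point is the "medium $Q$" regime (and, symmetrically, the left-tail corrections in the lower bound): a plain Markov estimate for $P(S_N > \cdot \mid \mathcal{G})$ or $P(S_N \le \cdot \mid \mathcal{G})$ only produces a bound of order $x^{-q}$ with $q < \alpha$, which is \emph{not} $o(\overline{F}(x))$, so one must either invoke the sharp uniform bounds of \cite{Olvera_11} (keeping the ambient order at $\overline{F}(x)$) or recover the missing rate with Hölder's inequality, in both cases using exactly the hypothesis $E[Z_N^{\alpha+\epsilon}] < \infty$. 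Making sure all these estimates are uniform enough to let $x \to \infty$ before $\delta \downarrow 0$ is the crux of the argument.
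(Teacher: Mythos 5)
Your proposal is correct in substance but organizes the argument differently from the paper. The paper takes $\delta=\delta(x)=1/\sqrt{\log x}\downarrow 0$ along with $x$, so no final limit over $\delta$ is needed, and it never isolates a conditional uniform bound: the troublesome middle regime is reduced to the joint event $\{S_N>\delta x,\,Q>\delta x\}$ and killed by an explicit one-big-jump decomposition (truncate the summands at $y=x/\log x$, use the exponential bound of Lemma 3.2/3.4 of \cite{Olvera_11} for the truncated sum on $\{Z_N\le w\}$ with $w=x^{1-\nu}/\gamma$, control $\{J_N(y)\ge 1\}\cap\{Q>\delta x\}$ by conditional independence plus Markov on $|X_1|^{1+\epsilon}$, and Markov on $Z_N^{\alpha+\epsilon}$ for $\{Z_N>w\}$); the lower bound is handled by the mirror-image events with $J_N(\kappa x)$. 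You instead fix $\delta$, let $\delta\downarrow 0$ at the end, and factor the whole difficulty into a single conditional uniform bound $P(S_N>t\mid\mathcal{G})\le K g(\mathcal{G})\overline{F}(t)$ with $g$ integrable, after which the middle range dies by dominated convergence; your lower bound via two disjoint contributions ($Q$ large versus a single $C_jX_j$ large, with Bonferroni) is likewise a legitimate alternative to the paper's subtraction scheme. Your version is more modular and makes the role of $E[Z_N^{\alpha+\epsilon}]<\infty$ transparent; the paper's version avoids having to prove the uniform bound as a separate statement, at the cost of a longer case analysis. One correction: the majorant $g(\mathcal{G})=1+\sum_i C_i^{\alpha-\epsilon'}+\sum_i C_i^{\alpha+\epsilon'}$ you propose is not quite enough, because the exponential (truncated-sum) estimate only applies when $Z_N\lesssim t$, and the complementary indicator must be dominated via $1(Z_N>t/a)\le (aZ_N/t)^{\alpha+\epsilon}$; so $g$ must also contain $Z_N^{\alpha+\epsilon}$ (integrable by hypothesis, and not controlled by $\sum_i C_i^{\alpha+\epsilon'}$ when there are many small weights). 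With that adjustment, and the H\"older step for the left tail carried out as you indicate, the argument closes.
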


\begin{proof}
Let $S_N = \sum_{i=1}^N C_i X_i$ and define $J_N(t) = \#\{ 1 \leq i < N+1: C_i X_i > t\}$. Assume that $0 < \epsilon < \alpha-1$ and set $\nu = \epsilon/(2(\alpha+\epsilon))$, $\gamma = \left( E[|X_1|^{1+\epsilon}] \right)^{1/(1+\epsilon)}$, $w = x^{1-\nu}/\gamma$, $y = x/\log x$ and $\delta = 1/\sqrt{\log x}$. Also note that
$$E\left[ \sum_{i=1}^N C_i^{\alpha-\epsilon} \right] \leq E\left[ Z_N^{\alpha-\epsilon}\right] \leq \left( E\left[ Z_N^{\alpha+\epsilon} \right] \right)^\frac{\alpha+\epsilon}{\alpha-\epsilon} < \infty.$$
Then, 
\begin{align}
P\left( S_N + Q > x \right) &\leq P\left(S_N + Q > x, \, S_N > (1-\delta)x  \right) + P\left(S_N + Q > x, \, S_N \leq (1-\delta)x, \, Q > (1-\delta)x  \right) \notag \\
&\hspace{5mm} + P\left(S_N + Q > x, \, S_N \leq (1-\delta)x, \, Q \leq (1-\delta)x  \right) \notag \\
&\leq P(S_N > (1-\delta)x) + P(Q > (1-\delta)x) \label{eq:MainTermsUB} \\
&\hspace{5mm} + P\left(S_N + Q > x, \, S_N \leq (1-\delta)x, \, \delta x < Q \leq (1-\delta)x   \right). \label{eq:MixTermsUB}
\end{align}
By Theorem 2.3 and the remark following Theorem 2.5 in \cite{Olvera_11}, we have that \eqref{eq:MainTermsUB} is equal to
$$E\left[\sum_{i=1}^N C_i^\alpha \right] \overline{F}(x) + c \overline{F}(x) + o\left( \overline{F}(x) \right)$$
as $x \to \infty$. To analyze \eqref{eq:MixTermsUB} first note that it is bounded by
\begin{align}
P\left( S_N > \delta x, \, Q > \delta x \right) &\leq P\left( S_N > \delta x, \, Q > \delta x, \, Z_N \leq w \right)  + P(Z_N > w)  \notag \\
&\leq P\left( S_N > \delta x, \, Q > \delta x, \, Z_N \leq w, \, J_N(y) = 0 \right) \notag \\
&\hspace{5mm} + P\left( Q > \delta x, \, Z_N \leq w,  \, J_N(y) \geq 1 \right) + P(Z_N > w) \notag \\
&\leq P\left( \sum_{i=1}^N C_i X_i^+ > \delta x, \, J_N(y) = 0, \, Z_N \leq w \right) \notag \\
&\hspace{5mm} + P\left( Q > \delta x, \, Z_N \leq w, \, J_N(y) \geq 1 \right) + P(Z_N > w) \notag \\
&\leq P\left( \sum_{i=1}^N C_i X_i^+ 1(C_i X_i^+ \leq y) > \delta x, \, Z_N \leq w \right) \label{eq:HeavyTrunc} \\
&\hspace{5mm} + P\left( Q > \delta x, \, Z_N \leq w, \, J_N(y) \geq 1 \right)  \label{eq:QandJ} \\
&\hspace{5mm} + P(Z_N > w)  .\notag
\end{align}
Now, by Lemma 3.4 in \cite{Olvera_11} (note that $Z_N \leq w$ implies $I_N(w) = \#\{1 \leq i < N+1: C_i > w \} = 0$), \eqref{eq:HeavyTrunc} is bounded by $K x^{-h}$ for any $h > 0$, in particular, for $h = \alpha+\epsilon$, from where it follows that it is $o\left( \overline{F}(x) \right)$. Here and in the remainder of the proof $K > 0$ is a generic constant, not necessarily the same from one line to the next. To analyze \eqref{eq:QandJ} let $\mathcal{F} = \sigma(Q, N, C_1, \dots, C_N)$ and note that we can write the probability as
\begin{align*}
&E\left[ 1(Q > \delta x, \, Z_N \leq w) E\left[ 1(J_N(y) \geq 1) | \mathcal{F} \right] \right] \\
&\leq E\left[ 1(Q > \delta x, \, Z_N \leq w) \sum_{i=1}^N E\left[ 1(C_i X_i > y) | \mathcal{F} \right] \right] \qquad \text{(by the union bound)} \\
&\leq \frac{E[|X_1|^{1+\epsilon}]}{y^{1+\epsilon}} E\left[ 1(Q > \delta x, \, Z_N \leq w) \sum_{i=1}^N C_i^{1+\epsilon} \right] \qquad \text{(by Markov's inequality)} \\
&\leq \frac{K }{y^{1+\epsilon}} E\left[ 1(Q > \delta x, Z_N \leq w) Z_N^{1+\epsilon} \right] \leq \frac{K w^{1+\epsilon}}{y^{1+\epsilon}} P(Q > \delta x) \\
&\leq \frac{K (\log x)^{1+\epsilon}}{x^{(1+\epsilon)\epsilon\nu}} \overline{F}(\delta x) \leq \frac{K (\log x)^{1+\epsilon} }{x^{(1+\epsilon)\epsilon\nu}\delta^{\alpha+\epsilon}} \overline{F}(x) \\
&= \frac{K(\log x)^{1+\alpha/2+3\epsilon/2}}{x^{(1+\epsilon)\epsilon\nu}} \overline{F}(x) = o \left( \overline{F}(x) \right), 
\end{align*}
where in the sixth inequality we used Potter's Theorem (see Theorem 1.5.6 in \cite{BiGoTe1987}). Finally, from Markov's inequality we get
\begin{align*}
P(Z_N > w)  &\leq \frac{E[Z_N^{\alpha+\epsilon}]}{w^{\alpha+\epsilon}} \leq \frac{K}{x^{(1-\nu)(\alpha+\epsilon)}} = \frac{K}{x^{\alpha+\epsilon/2}} = o\left( \overline{F}(x) \right).
\end{align*}
We have thus shown that \eqref{eq:MixTermsUB} is $o\left( \overline{F}(x) \right)$, and the upper bound follows. 

For the lower bound we have that
\begin{align}
P\left(S_N + Q > x \right) &\geq P\left(S_N + Q > x, \, Z_N \leq w, \, S_N > (1+\delta)x \right) \notag \\
&\hspace{5mm} + P\left(S_N + Q > x, \ Z_N \leq w, \, S_N \leq (1+\delta)x, \, Q > (1+\delta)x \right) \notag\\
&= P\left(S_N > (1+\delta)x, \, Z_N \leq w \right) + P\left(Q > (1+\delta)x, \, Z_N \leq w \right)   \label{eq:mainTerms} \\
&\hspace{5mm} - P\left(S_N + Q \leq x, \, Z_N \leq w, \, S_N > (1+\delta)x \right) \label{eq:harderTerm} \\
&\hspace{5mm} - P\left(S_N  + Q \leq x, \, Z_N \leq w, \, S_N \leq (1+\delta)x, \, Q > (1+\delta)x \right) \label{eq:Markov1} \\
&\hspace{5mm} - P\left(Z_N \leq w, \, S_N > (1+\delta)x, \, Q > (1+\delta)x \right) \label{eq:Markov2}
\end{align}
Note that \eqref{eq:mainTerms} is bounded from below by
$$P(S_N > (1+\delta)x) + P(Q > (1+\delta)x ) - 2 P(Z_N > w) = E\left[ \sum_{i=1}^N C_i^\alpha \right] \overline{F}(x) + c \overline{F}(x) + o\left( \overline{F}(x) \right),$$
by the same arguments used for the upper bound. Also note that we can bound the sum of the probabilities in \eqref{eq:Markov1} and \eqref{eq:Markov2} by
\begin{align*}
&P(S_N \leq -\delta x, \, Z_N \leq w, \, Q > x) + P(S_N > \delta x, \, Z_N \leq w, \, Q > x) \\
&\leq 2P\left( Z_N \leq w, \, |S_N| \geq \delta x, \, Q > x \right) \\
&= 2 E\left[ 1(Z_N \leq w, \, Q > x) E\left[ 1(|S_N| \geq \delta x) | \mathcal{F} \right] \right] \\
&\leq \frac{2}{\delta x} E\left[ 1(Z_N \leq w, \, Q > x) E\left[ |S_N| | \mathcal{F} \right] \right] \qquad \text{(by Markov's inequality)} \\
&\leq \frac{2E[|X_1|]}{\delta x} E\left[ 1(Z_N \leq w, \, Q > x) Z_N \right] \\
&\leq \frac{Kw}{\delta x} P(Q > x) \leq \frac{K(\log x)^{1/2}}{x^\nu} \overline{F}(x) = o\left( \overline{F}(x) \right).
\end{align*}
It only remains to analyze \eqref{eq:harderTerm}. Let $\kappa = \nu^2$ and note that  the probability in \eqref{eq:harderTerm} is bounded by
\begin{align}
&P\left(S_N+Q \leq  x, \, Z_N \leq w, \, S_N > (1+\delta)x, \, J_N(\kappa x) = 0 \right) \notag \\
&\hspace{5mm} + P\left(S_N+Q \leq x, \, Z_N \leq w, \, S_N > (1+\delta)x, \, J_N(\kappa x) \geq 1 \right) \notag  \\
&\leq  P\left( \sum_{i=1}^N C_i X_i 1(C_i X_i \leq \kappa x) > (1+\delta) x, \, Z_N \leq w \right) \label{eq:VeryTrunc} \\
&\hspace{5mm} + P\left(Q < -\delta x, \, Z_N \leq w, \, S_N > (1+\delta)x, \, J_N(\kappa x) \geq 1 \right) \label{eq:QandJ_2} 
\end{align}
By Lemma 3.2 in \cite{Olvera_11}, with $u = x^{1-\nu}$, $v = \kappa x$, $z = x$, $\eta = 1+\epsilon$ and $A = (-\infty, w]$ (note that $Z_N \leq w$ implies $I_N(w) = 0$), \eqref{eq:VeryTrunc} is bounded by
\begin{align*}
E\left[ 1(Z_N \leq w) e^{-\frac{\epsilon}{\kappa x} \log(\kappa x^\nu) \left( x - \left(E[X_1] +  \frac{K\gamma}{\log(\kappa x^\nu)} \right)^+ Z_N \right)} \right] &\leq K e^{-\frac{\epsilon}{\kappa} \log(\kappa x^\nu)} \leq \frac{K}{x^{2(\alpha+\epsilon)}} = o\left( \overline{F}(x) \right).
\end{align*}

As for \eqref{eq:QandJ_2} use Potter's Theorem (see Theorem 1.5.6(iii) in \cite{BiGoTe1987}) to obtain,
\begin{align*}
&P(Q < -\delta x, \, Z_N \leq w, \, J_N(\kappa x) \geq 1 ) \\
&\leq E\left[ 1(Q < -\delta x, \, Z_N \leq w) \sum_{i=1}^N \overline{F}(\kappa x/C_i) \right] \qquad \text{(by the union bound)} \\
&\leq K\overline{F}(x) E\left[ 1(Q < -\delta x) \sum_{i=1}^N C_i^{\alpha+\epsilon} \right] \qquad \text{(by Potter's Theorem)} \\
&\leq K\overline{F}(x) E\left[ 1(Q < -\delta x) Z_N^{\alpha+\epsilon} \right] \\
&= o\left( \overline{F}(x) \right), 
\end{align*}
where in the last step we used dominated convergence ($E[Z_N^{\alpha+\epsilon}] < \infty$) to see that $E\left[ 1(Q < -\delta x) Z_N^{\alpha+\epsilon} \right] \to 0$ as $x \to \infty$. 
\end{proof}



\bibliographystyle{plain}

\end{document}